\numberwithin{equation}{section}
\newtheorem{theorem}{Theorem}
\numberwithin{theorem}{section}
\newtheorem{corollary}[theorem]{Corollary}
\newtheorem{proposition}[theorem]{Proposition}
\newtheorem{lemma}[theorem]{Lemma}
\theoremstyle{definition}
\newtheorem{definition}[theorem]{Definition}
\newtheorem*{remark*}{Remark}
\newtheorem{example}[theorem]{Example}
\newcommand{\LBo}[1]{\Delta_{#1}^*} 
\newcommand{\BeOp}[1]{{\mathbf{J}_{#1}}}
\newcommand{\BeKe}[1]{{B^{(#1)}}}
\newcommand{\BeKeMod}[1]{{\mathcal{B}^{(#1)}}} 
\newcommand{\BeKeNumIntErr}[2]{{\mathcal{B}_{#1}^{(#2)}}} 
\newcommand{\filBeKe}[2]{{B_{#1}^{(#2)}}}
\newcommand{\sph}[1]{\mathbb{S}^{#1} 
}
\newcommand{\sob}[3]{ 
\mathbb{W}_{#1}^{#2}(\mathbb{S}^{#3})}
\newcommand{\Lp}[2]{ 
\mathbb{L}_{#1}(\mathbb{S}^{#2})}
\newcommand{\MESHRATIO}{\gamma}
\newcommand{\Eb}[3]{ E_{#1}(#2)_{#3} } 
\newcommand{\filterpsi}[2]{ \psi_{#1}^{#2} }
\newcommand{\R}{\mathbb{R}}
\newcommand{\DEF}{{\,:=\,}}
\newcommand{\PT}[1]{\mathbf{#1}}
\newcommand{\re}{\mathop{\mathrm{Re}}}
\newcommand{\bsx}{\mathbf{x}}
\DeclareMathOperator{\dd}{\mathrm{d}}
\DeclareMathOperator{\xctint}{I}
\DeclareMathOperator{\numint}{Q}
\DeclareMathOperator{\ClausenCi}{Ci}
\DeclareMathOperator{\ClausenSi}{Si}
\DeclareMathOperator{\COV}{\rho}
\DeclareMathOperator{\gammafcn}{\Gamma}
\DeclareMathOperator{\SEP}{\delta}
\DeclareMathOperator{\WCE}{wce}
\DeclareMathOperator{\zetafcn}{\zeta}
\DeclareMathOperator{\HyperF}{F}
\newcommand{\Hypergeom}[5]{{\sideset{_#1}{_#2}\HyperF\!\left(\substack{\displaystyle#3\\\displaystyle#4};#5\right)}}
\newcommand{\Pochhsymb}[2]{{\left(#1\right)_{#2}}}
\title[Covering of spheres and equal weight cubature]{Covering of spheres by spherical caps and worst-case error for equal weight cubature in Sobolev spaces}
\author[J.~S.~Brauchart, J.~Dick, E.~B.~Saff, I.~H.~Sloan, Y.G.~Wang and R.~S.~Womersley]{J.~S.~Brauchart\textasteriskcentered{}, J.~Dick, E.~B.~Saff, I.~H.~Sloan, Y.G.~Wang and R.~S.~Womersley}
\thanks{\noindent \textasteriskcentered{} corresponding author. \\ This research was supported under Australian Research Council's Discovery Projects funding scheme (project number DP120101816). The research of the first author was also partially supported by the Austrian Science Fund FWF project F5510 (part of the Special Research Program (SFB) ``Quasi-Monte Carlo Methods: Theory and Applications''. 
The research of the third author was also supported by U.S. National Science Foundation grant DMS-1109266. }
\date{\today}
\date{\today}
\begin{document}

\address{J.~S.~Brauchart: Institut f\"ur Analysis und Computational Number Theory,
Graz University of Technology ,
Steyrergasse 30,
8010 Graz,
Austria}
\email{j.brauchart@tugraz.at}

\address{J.~S.~Brauchart, J.~Dick, I.~H.~Sloan, Y.~G.~Wang and R.~S.~Womersley: \linebreak
School of Mathematics and Statistics,
University of New South Wales,
Sydney, NSW, 2052,
Australia }
\email{josef.dick@unsw.edu.au, i.sloan@unsw.edu.au, yuguang.e.wang@gmail.com, \linebreak r.womersley@unsw.edu.au}

\address{E.~B.~Saff:
Center for Constructive Approximation,
Department of Mathematics, \linebreak
Vanderbilt University,
Nashville, TN 37240,
USA}
\email{edward.b.saff@vanderbilt.edu}

\begin{abstract}
We prove that the covering radius of an $N$-point subset $X_N$ of the unit sphere $\mathbb{S}^d \subset \mathbb{R}^{d+1}$ is bounded above by a power of the worst-case error for equal weight cubature $\frac{1}{N}\sum_{\mathbf{x} \in X_N}f(\mathbf{x}) \approx \int_{\mathbb{S}^d} f \, \mathrm{d} \sigma_d$ for functions  in the Sobolev space $\mathbb{W}_p^s(\mathbb{S}^d)$, where $\sigma_d$ denotes normalized area measure on $\mathbb{S}^d.$ These bounds are close to optimal when $s$ is close to $d/p$.
Our study of the worst-case error  along with results of Brandolini et al. motivate the definition of Quasi-Monte Carlo (QMC) design sequences  for $\mathbb{W}_p^s(\mathbb{S}^d)$, which have previously been introduced only in the Hilbert space setting $p=2$. We say that a sequence $(X_N)$ of $N$-point configurations is a QMC-design sequence for $\mathbb{W}_p^s(\mathbb{S}^d)$ with $s > d/p$ provided the worst-case equal weight cubature error for $X_N$ has order $N^{-s/d}$ as $N \to \infty$, a property that holds, in particular, for a sequence of spherical $t$-designs in which each design has order $t^d$ points. For the case $p = 1$, we deduce that any QMC-design sequence $(X_N)$ for $\mathbb{W}_1^s(\mathbb{S}^d)$ with $s > d$ has the optimal covering property; i.e., the covering radius of $X_N$ has order $N^{-1/d}$ as $N \to \infty$.

A significant portion of our effort is devoted to the formulation of the worst-case error in terms of a Bessel kernel, and showing that this kernel satisfies a Bernstein type inequality involving the mesh ratio of $X_N$. As a consequence we prove that any QMC-design sequence for $\mathbb{W}_p^s(\mathbb{S}^d)$ is also a QMC-design sequence for $\mathbb{W}_{p^\prime}^s(\mathbb{S}^d)$ for all $1 \leq p < p^\prime \leq \infty$ and, furthermore, if $(X_N)$ is a quasi-uniform QMC-design sequence for $\mathbb{W}_p^s(\mathbb{S}^d)$, then it is also a QMC-design sequence for $\mathbb{W}_p^{s^\prime}(\mathbb{S}^d)$ for all $s > s^\prime > d/p$.
\end{abstract}

\keywords{Covering radius, numerical integration, Quasi-Monte Carlo, QMC-design sequence, spherical design, sphere, Sobolev space, worst-case error}
\subjclass[2010]{Primary 65D30, 65D32; Secondary 52C17, 41A55}

\maketitle


\section{Introduction}

In this paper we consider covering the unit sphere $\sph{d}$ in $\R^{d+1}$, $d \geq 1$, with equal sized spherical caps, and establish a connection to equal weight cubature formulas that use the centers of those caps as sampling points for the function.  As a corollary, we will show that the optimal order of convergence of the worst-case equal weight cubature error for functions in a suitable Sobolev space implies asymptotically an optimal covering property by spherical caps.

\subsection*{Equal-weight numerical integration}
In the literature equal weight cubature is often given the name Quasi-Monte Carlo (see Niederreiter~\cite{Nie92} for the case of the unit cube). Thus a \emph{Quasi-Monte Carlo (QMC) method} is an equal weight numerical integration formula with \emph{deterministic} node set  in contrast to Monte Carlo methods: for a node set $X_N = \{ \PT{x}_1, \dots, \PT{x}_N \} \subset \sph{d}$, the QMC method
%
\begin{equation*}
\numint[X_N](f) \DEF \frac{1}{N} \sum_{k=1}^N f(\PT{x}_k)
\end{equation*}
is a natural approximation of the integral
\begin{equation*}
\xctint(f) \DEF \int_{\sph{d}} f( \PT{x} ) \dd \sigma_d( \PT{x} )
\end{equation*}
of a given continuous real-valued function $f$ on $\sph{d}$ with respect to the normalized surface area measure on $\sph{d}$. A node set $X_N$ is deterministically chosen in a sensible way so as to guarantee ``small'' error of numerical integration for functions in suitable subfamilies of the class of continuous functions $C(\sph{d})$. 

A fundamental example of such node sets are \emph{spherical $t$-designs\footnote{The symbol $X_N$ is used for general sets of $N$ points on $\mathbb{S}^d$, while $Z_{N_t}$ always refers to a spherical $t$-design with $N_t$ points.} $Z_{N_t} \subset \sph{d}$}, $t \geq 1$, introduced in \cite{DeGoSei1977}. They  define QMC methods that integrate exactly all spherical polynomials of degree $\leq t$:
\begin{equation} \label{eq:spherical.designs}
\numint[Z_{N_t}](P) = \xctint(P), \qquad \deg P \leq t.
\end{equation}
Thus, spherical $t$-designs yield zero error on polynomial subfamilies of $C(\sph{d})$.
The definition of spherical $t$-designs says nothing about the number of points $N_t$ that might be needed. A lower bound on $N_t$ of order $t^d$ was given in \cite{DeGoSei1977}. Recently, Bondarenko et al.~\cite{BoRaVi2013} proved: 
\begin{proposition} \label{prop:Bondarenko.et.al.A}
There exists $c_d>0$ such that to \emph{every} $N \geq c_d \, t^d$ and $t \geq 1$ there exists an $N$-point spherical $t$-design on $\sph{d}$.
\end{proposition}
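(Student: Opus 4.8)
The plan is to follow Bondarenko, Radchenko and Viazovska \cite{BoRaVi2013}: realize the design condition \eqref{eq:spherical.designs} as a nonlinear system of equations and solve it by a topological degree argument, with the flexibility in $N$ supplied entirely by the existence of area-regular partitions of $\sph{d}$ with any prescribed number of cells. Let $m=m(d,t)$ denote the dimension of the space of spherical polynomials on $\sph{d}$ of degree $\le t$ with zero mean; classically $m$ is of order $t^{d}$, with implied constants depending only on $d$. Fix an $\Lp{2}{d}$-orthonormal basis $Y_{1},\dots,Y_{m}$ of this space and write $\mathbf{Y}=(Y_{1},\dots,Y_{m})$. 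By \eqref{eq:spherical.designs}, a point set $\{\PT{x}_{1},\dots,\PT{x}_{N}\}\subset\sph{d}$ is a spherical $t$-design if and only if
\[
\Phi(\PT{x}_{1},\dots,\PT{x}_{N})\DEF\frac{1}{N}\sum_{k=1}^{N}\mathbf{Y}(\PT{x}_{k})=\mathbf{0}\in\R^{m}.
\]
So it suffices to show that $\mathbf{0}$ lies in the range of $\Phi$ once each $\PT{x}_{k}$ is allowed to range over a suitable region, under the single hypothesis $N\ge c_{d}t^{d}$. The threshold is already visible here: $\Phi$ imposes $m\asymp t^{d}$ scalar constraints while the $N$ points supply $Nd$ real parameters, so one needs $Nd\gtrsim m$, i.e. $N\gtrsim t^{d}/d$; the content of the proposition is that this necessary count is essentially sufficient.

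Given $N$ (any value $\ge c_{d}t^{d}$), fix an area-regular partition $\sph{d}=\Omega_{1}\cup\dots\cup\Omega_{N}$ with $\sigma_{d}(\Omega_{j})=1/N$ and geodesic diameter $\le C_{d}N^{-1/d}$; such partitions exist for every $N$ and every $d$, and this — not any separate combinatorial step — is what yields a $t$-design of every admissible cardinality. One then restricts $\Phi$ to $\prod_{j=1}^{N}\Omega_{j}$ and must show $\mathbf{0}\in\Phi(\prod_{j}\Omega_{j})$. Two quantitative facts drive this. First, a base configuration with small defect: choosing $\PT{z}_{j}\in\Omega_{j}$ independently and uniformly with respect to $\sigma_{d}$, one has $\mathbb{E}\,\Phi(\PT{z}_{1},\dots,\PT{z}_{N})=\mathbf{0}$ (the cross-cell contributions telescope) and $\mathbb{E}\,\|\Phi(\PT{z}_{1},\dots,\PT{z}_{N})\|^{2}\le m/N$, because $\sum_{k=1}^{m}Y_{k}^{2}\equiv m$ on $\sph{d}$; hence some admissible choice of centers has defect of order $\sqrt{m/N}$. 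Second, a quantitative description of how $\Phi$ moves as the $\PT{x}_{k}$ vary over the $\Omega_{j}$: this rests on polynomial inequalities on $\sph{d}$ — a Bernstein-type bound $\|\nabla P\|_{\Lp{\infty}{d}}\le C_{d}\,t\,\|P\|_{\Lp{\infty}{d}}$, a Nikol'skii/Marcinkiewicz--Zygmund comparison of $\Lp{\infty}{d}$ and $\Lp{2}{d}$ norms on this finite-dimensional space, and the lower mass bound $\sigma_{d}(\Omega_{j})=1/N$ — which together show that, once $N\ge c_{d}t^{d}$, the restriction of $\Phi$ to $\prod_{j}\Omega_{j}$ is quantitatively surjective onto a neighbourhood of $\mathbf{0}$. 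One then recasts ``$\Phi=\mathbf{0}$'' as a fixed-point problem for a self-map of $\prod_{j}\Omega_{j}$ (or of a ball, for the degree formulation) and invokes Brouwer's theorem, equivalently a nonvanishing topological degree argument.

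I expect this last step to be the main obstacle; it is the technical heart of \cite{BoRaVi2013}. One must arrange the perturbation so that the cell diameters ($\asymp N^{-1/d}$), the gradient growth of degree-$t$ polynomials ($\asymp t$), the number of free directions ($Nd$), and the defect of the base configuration ($\asymp\sqrt{m/N}$) balance to produce a map whose degree is demonstrably nonzero, and this balance is attained precisely when $N\asymp t^{d}$. The individual ingredients — area-regular partitions, Bernstein and Nikol'skii/Marcinkiewicz--Zygmund inequalities, Brouwer's theorem — are standard; organizing them so that the topological conclusion holds with the optimal exponent $t^{d}$ rather than a larger power is the delicate part. (One could instead first produce designs of a few consecutive cardinalities and then observe that the equal-weight union of a $t$-design of size $a$ with one of size $b$ is a $t$-design of size $a+b$; but since area-regular partitions exist for all $N$, the argument above already covers every $N\ge c_{d}t^{d}$ directly.)
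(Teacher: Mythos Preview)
The paper does not prove this proposition at all: it is quoted verbatim as a result of Bondarenko, Radchenko and Viazovska~\cite{BoRaVi2013}, with no argument supplied. So there is nothing in the paper to compare your proposal against; the ``paper's own proof'' is simply the citation.

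That said, your sketch is a faithful high-level summary of the strategy in~\cite{BoRaVi2013}: encode the design condition as $\Phi=\mathbf{0}$ for a map into $\R^{m}$ with $m\asymp t^{d}$, constrain each point to a cell of an area-regular partition (which exists for every $N$, giving the ``every $N\ge c_d t^d$'' clause), and then use a Brouwer/degree argument fed by polynomial inequalities of Bernstein and Marcinkiewicz--Zygmund type. You are also right that the delicate part is the topological step---showing that the relevant map has nonzero degree, or equivalently that the associated vector field on the boundary points outward---and your write-up does not actually carry this out. As a proof it is therefore incomplete: you have identified the ingredients and the parameter count, but the core lemma (that moving the $\PT{x}_k$ over their cells sweeps out a full neighbourhood of $\mathbf{0}$ once $N\ge c_d t^d$) is asserted rather than established. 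For the purposes of the present paper, however, nothing more is expected: the proposition is used as a black box, and a citation to~\cite{BoRaVi2013} is all the paper provides.
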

\noindent 
This key result ensures that spherical $t$-designs with $N_t$ points of exactly the optimal order $t^d$ exist for every $t \geq 1$ (we write $N_t \asymp t^d$). A sequence $(Z_{N_t})$ of such designs with  optimal order for the number of points has the remarkable property, see \cite{BrHe2007,HeSl2005a}, that 
\begin{equation*}
| \numint[Z_{N_t}](f) - \xctint(f) | \leq c \, N_t^{-s/d} \, \| f \|_{H^s}
\end{equation*}
for all functions $f$ in a Sobolev space $H^s$ with smoothness index $s > d/2$ and norm $\| \cdot \|_{H^s}$ in the Hilbert space setting. The order of $N_t$ cannot be improved, see~\cite{He2006,HeSl2005b}. 
This observation motivated the introduction of \emph{QMC-design sequences} for Sobolev spaces~$H^s$ in~\cite{BrSaSlWo2014}: these are sequences of $N$-point sets that have the same error behavior as spherical $t$-designs, but with no polynomial exactness requirement.  
One purpose of this paper is to provide the extension to general Sobolev spaces. 

\subsection*{Covering for the sphere}
For a finite set  $X_{N} = \{\PT{x}_1, \PT{x}_2,\ldots, \PT{x}_N\} \subset
\sph{d}$ \emph{the covering radius} (or mesh norm, or fill radius) is defined
by
\begin{equation}\label{eq:covering_def}
\COV(X_N) \DEF \max_{\PT{x} \in \sph{d}} \min_{1 \leq k \leq N} \arccos(\PT{x} \cdot \PT{x}_k).
\end{equation}
Thus the covering radius is the geodesic radius of the largest hole in the
mesh formed by the point set $X_N$.  Equivalently, it is the minimal radius of
equal-sized spherical caps centered at the points of $X_N$ that cover $\sph{d}$.
There is a trivial lower bound on $\COV(X_N)$ arising from the fact that a
spherical cap of geodesic radius $\COV(X_N)$ has a surface measure of exact order
$[\COV(X_N)]^d$: it follows that there exists $c_d>0$ such that
\begin{equation*}
\COV(X_N) \geq c_d \, N^{-1/d} \qquad \text{for all $N$.}
\end{equation*}
We will therefore say that a sequence $(X_N)$ of point sets on $\sph{d}$ has the \emph{optimal covering property} if
\begin{equation} \label{eq:optimal.covering.property}
\COV(X_N) = \mathcal{O}( N^{-1/d} ) \qquad \text{as $N \to \infty$.}
\end{equation}

Yudin~\cite{Yud1995} showed that if $Z_{N_t}$ is a spherical $t$-design,
then $Z_{N_t}$  gives a covering of the sphere~$\sph{d}$ with radius $\eta_{t,d}$, where
$\cos(\eta_{t,d})$ is the largest zero of a certain Jacobi polynomial.
Reimer~\cite{Rei1999,Rei2000} extended Yudin's result to any positive weight
cubature rule that is exact for polynomials of degree at most $t$,
and used results relating the largest zero of Gegenbauer polynomials
to the first positive zero of a Bessel function, to show that such
point sets, which include spherical $t$-designs, have covering radius $\COV(Z_{N_t}) = \mathcal{O}_d( 1/t )$,where the order notation $\mathcal{O}_d$ means that the implied constant depends only on $d$.


Yudin's result implies that a sequence of spherical $t$-designs with $N_t \asymp t^d$ points has the optimal covering  property~\eqref{eq:optimal.covering.property}. Reimer's result also shows that the node sets of positive weight cubature rules that are exact for polynomials of degree at most $t$ and have $N = \mathcal{O}_d( t^d )$ points form a sequence that has the optimal covering property.
The present paper extends Yudin's result in a different direction, replacing the condition that polynomials of degree up to $t$ be integrated exactly by a condition on the rate of convergence of the QMC error. 

\subsection*{The results.} 
In this paper the worst-case error will play an important role. For a
Banach space $B$ of continuous functions on $\sph{d}$ with norm ${\| \cdot
\|_B}$, the {\em worst-case error} for the QMC method
$\numint[X_N]$ with node set $X_N \subset \sph{d}$  
approximating the integral $\xctint(f)$ is defined by
\begin{equation}\label{eq:wceB}
\WCE(\numint[X_N]; B ) \DEF \sup \left\{ \big| \numint[X_N](f) - \xctint(f) \big| : f \in B, \| f \|_B \leq 1 \right\}.
\end{equation}
That is, the worst-case error is the largest error (for the supremum is
indeed a maximum) for all functions in the unit ball of $B$.

We shall be interested in particular in the Sobolev spaces $\sob{p}{s}{d}$ for $p \geq 1$ and $s \geq 0$ consisting of functions $f \in \Lp{p}{d}$ for
which $(1-\LBo{d})^{s/2}f \in \Lp{p}{d}$, where $\LBo{d}$ is the Laplace-Beltrami operator on $\sph{d}$. The Sobolev norm $\| f \|_{\sob{p}{s}{d}}$ of  $f$ is defined to be the $\Lp{p}{d}$-norm $\| ( 1 - \LBo{d} )^{s/2} f \|_p$.
For a full description of the Sobolev space setting, see Section~\ref{sec:function.space.setting}.
We show in Section~\ref{sec:wce} that the worst-case error of $\numint[X_N]$ for $\sob{p}{s}{d}$ is equal to the $\Lp{q}{d}$-norm (with $1/p + 1/q = 1$) of a function that is related to the Bessel kernel for $\sob{p}{s}{d}$.

A principal result of the paper is that the covering radius of a point
set $X_N$ on $\sph{d}$ is upper bounded by a power of the worst-case error in a Sobolev space:

\begin{theorem}\label{thm:Covering.bounded.by.Wp}
Let $d \geq 1$, $1 \leq p, q \leq \infty$ with $1/p + 1/q = 1$ and $s > d/p$. For
a positive integer $N$, let $X_N$ be an $N$-point set on $\sph{d}$. Then
\begin{equation}\label{eq:covering.Wp}
\COV(X_N) \leq c_{s,d} \left[\WCE(\numint[X_N];\sob{p}{s}{d})\right]^{1 / ( s + d/q )},
\end{equation}
where the constant $c_{s,d}$ depends on $s$ and $d$ but not on $p$, $q$ or $N$.
\end{theorem}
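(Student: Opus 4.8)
The plan is to exhibit a single well-chosen test function $f$ supported near the deepest hole of the mesh, estimate its Sobolev norm from above and the cubature error $|\numint[X_N](f) - \xctint(f)|$ from below, and then rearrange the definition \eqref{eq:wceB} of the worst-case error to extract a lower bound on $\WCE(\numint[X_N];\sob{p}{s}{d})$ in terms of $\COV(X_N)$; inverting that bound gives \eqref{eq:covering.Wp}. Concretely, write $\SEP := \COV(X_N)$ and pick a point $\PT{x}^* \in \sph{d}$ realizing the maximum in \eqref{eq:covering_def}, so that the open spherical cap $C(\PT{x}^*,\SEP)$ of geodesic radius $\SEP$ centered at $\PT{x}^*$ contains \emph{no} point of $X_N$. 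I would take $f = f_{\PT{x}^*}$ to be a smooth "bump" function that is nonnegative, supported in $C(\PT{x}^*,\SEP)$, equal to $1$ on the concentric cap of half the radius, and obtained by composing a fixed smooth profile $\phi:[0,\infty)\to[0,1]$ with the geodesic distance $\arccos(\PT{x}^*\cdot\PT{x})$ rescaled by $\SEP$. Because $f$ vanishes on every node $\PT{x}_k$, we have $\numint[X_N](f) = 0$, while $\xctint(f) = \int_{\sph{d}} f \,\dd\sigma_d \geq \sigma_d(C(\PT{x}^*,\SEP/2)) \geq c_d\, \SEP^{d}$ (the cap measure has exact order $\SEP^d$ for $\SEP$ bounded, and one may dispose of large $\SEP$ separately since $\COV(X_N) \le \pi$ is trivially bounded). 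Hence $|\numint[X_N](f) - \xctint(f)| \geq c_d\, \SEP^{d}$.

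The next step is to bound $\|f\|_{\sob{p}{s}{d}} = \|(1-\LBo{d})^{s/2}f\|_p$ from above by a constant times $\SEP^{d/p - s}$. For integer $s$ this is the standard scaling estimate: each application of the Laplace–Beltrami operator to a function of the rescaled geodesic distance costs a factor $\SEP^{-2}$ from the chain rule (plus lower-order curvature terms that are no worse), and the $\Lp{p}{d}$-norm over a cap of radius $\SEP$ contributes a volume factor $\SEP^{d/p}$, giving $\|(1-\LBo{d})^{s/2} f\|_p \le c_{s,d}\,\SEP^{d/p}(1+\SEP^{-s}) \le c_{s,d}\,\SEP^{d/p - s}$ for $\SEP$ bounded. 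For general real $s > d/p$ one interpolates, or more cleanly works directly in Euclidean coordinates on the cap: pulling back via a normal-coordinate chart, $f$ becomes (up to bounded smooth changes of metric) an ordinary dilate $g(\cdot/\SEP)$ of a fixed Schwartz bump $g$ on $\R^d$, and the Bessel-potential Sobolev norm of a dilate satisfies $\|g(\cdot/\SEP)\|_{W^{s,p}(\R^d)} \asymp \SEP^{d/p}\max(1,\SEP^{-s}) \asymp \SEP^{d/p-s}$; transferring this to the sphere costs only constants depending on $s,d$. Combining the two estimates: the normalized function $f/\|f\|_{\sob{p}{s}{d}}$ lies in the unit ball, so by \eqref{eq:wceB},
\begin{equation*}
\WCE(\numint[X_N];\sob{p}{s}{d}) \;\ge\; \frac{|\numint[X_N](f)-\xctint(f)|}{\|f\|_{\sob{p}{s}{d}}} \;\ge\; \frac{c_d\,\SEP^{d}}{c_{s,d}\,\SEP^{d/p-s}} \;=\; c_{s,d}'\, \SEP^{\,s + d - d/p} \;=\; c_{s,d}'\, \SEP^{\,s + d/q},
\end{equation*}
using $1/p+1/q = 1$. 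Solving for $\SEP = \COV(X_N)$ yields \eqref{eq:covering.Wp} with $c_{s,d} = (c'_{s,d})^{-1/(s+d/q)}$; one checks that this constant can be taken independent of $p,q$ because the only $p$-dependence entered through the exponent $d/p$ in the volume factor, which has been absorbed into the exponent $s+d/q$, while the profile $\phi$ (equivalently $g$) is chosen once and for all.

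The main obstacle is making the Sobolev-norm estimate $\|f_{\PT{x}^*}\|_{\sob{p}{s}{d}} \le c_{s,d}\,\SEP^{d/p-s}$ rigorous and uniform for non-integer $s$ and for all $p\in[1,\infty]$ simultaneously, since the Bessel-potential norm $\|(1-\LBo{d})^{s/2}f\|_p$ is nonlocal and does not obviously respect the scaling of a shrinking cap. The cleanest route is probably to prove the dilation estimate on $\R^d$ for the Euclidean Bessel potential (where it is elementary via the Fourier transform, with the factor $\max(1,\SEP^{-s})$) and then invoke a comparability result between the spherical Sobolev norm on a small cap and the corresponding Euclidean Sobolev norm on the image chart — such a result follows from the equivalence of $\sob{p}{s}{d}$ with a localized/Besov-type characterization, or can be quoted from the function-space machinery set up in Section~\ref{sec:function.space.setting}. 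One must also be a little careful that the implied constants in the cap-measure bound and the chart comparison do not degenerate as $p\to\infty$; both are in fact geometric and $p$-independent, so this is a matter of bookkeeping rather than a genuine difficulty. Finally, since we only need the bound for $\COV(X_N)$ small (it is trivially $\le\pi$ and then the inequality holds for a suitably large $c_{s,d}$), all estimates may freely assume $\SEP \le 1$, which removes any concern about the $\max(1,\SEP^{-s})$ term.
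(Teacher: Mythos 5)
Your proposal takes essentially the same route as the paper: construct a smooth fooling function supported in the deepest hole (hence vanishing at every node of $X_N$), lower-bound $|\xctint(f)|\gtrsim \rho^d$, upper-bound $\|f\|_{\sob{p}{s}{d}}\lesssim \rho^{d/p-s}$ by treating even $s$ directly and then passing to general $s$ via the Gagliardo–Nirenberg interpolation inequality (which you explicitly offer as one of your two completions), and divide. The only cosmetic difference is that the paper's bump $f_\rho$ is supported on the collar $\cos\rho\le\PT{y}_0\cdot\PT{x}\le\cos(\rho/2)$ rather than on the full cap, a choice made to simplify the zonal differential-operator computation but not essential to the argument.
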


The theorem will be proved in Section \ref{sec:covering}.  Note that the
condition $s > d/p$ is natural, in that it ensures that the generalized
Sobolev space is continuously embedded in the space of continuous
functions on $\sph{d}$.
The significance of \eqref{eq:covering.Wp} is that results on the order of
decay of worst-case cubature errors on $\sph{d}$ as $N\to \infty$
translate directly into bounds for the decay of the covering radius.  See Corollary \ref{cor:main} below for a concrete instance that ensures optimal order convergence of the covering radius.

The fact that spherical $t$-designs with $N_t \asymp t^d$ points have optimal order of
decay for the worst-case error in Sobolev spaces is a consequence of
results due to Brandolini et al.~\cite{BCCGST2013}, generalising
earlier results for $p=2$ of \cite{HeSl2005a} and \cite{BrHe2007}:

\begin{proposition}[{cf.~\cite[Lemma~2.10]{BCCGST2013}}]\label{prop:brandetal}
Let $1 \leq p \leq \infty$. Given $s > d/p$, there exists $C_{p,s,d} > 0$
such that for every $N$-point spherical $t$-design $X_N$ on $\sph{d}$ there holds
\begin{equation} \label{eq:asymptotic_in_t.bound}
\WCE(\numint[X_N];\sob{p}{s}{d}) \leq \frac{C_{p,s,d}}{t^s},
\end{equation}
where the constant $C_{p,s,d}$ does not depend on $t$, $N$, or the particular spherical design $X_N$.
\end{proposition}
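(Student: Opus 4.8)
The plan is to reduce the statement to a known filtered-hyperinterpolation (or polynomial approximation) estimate combined with the exactness property of spherical designs. First I would recall the standard reproducing-kernel-free characterization of the worst-case error: for $f \in \sob{p}{s}{d}$ and any spherical polynomial $P$ of degree $\leq t$, the exactness \eqref{eq:spherical.designs} of the design $X_N$ gives $\numint[X_N](P) = \xctint(P)$, hence
\begin{equation*}
\big| \numint[X_N](f) - \xctint(f) \big| = \big| \numint[X_N](f - P) - \xctint(f - P) \big| \leq 2 \, \| f - P \|_{\Lp{\infty}{d}}.
\end{equation*}
Taking the infimum over all such $P$, the worst-case error is bounded by $2 \, E_t(f)_{\infty}$, the best $\Lp{\infty}{d}$-approximation error by polynomials of degree $\leq t$. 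The task is then to bound $E_t(f)_\infty$ in terms of $\| f \|_{\sob{p}{s}{d}}$ and $t^{-s}$, uniformly over the unit ball.

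Second, I would invoke the Sobolev embedding together with a Jackson-type (direct) theorem on the sphere. Since $s > d/p$, the space $\sob{p}{s}{d}$ embeds continuously into $C(\sph{d})$, and more precisely into a Besov/Hölder-type scale; the key quantitative input is the estimate $E_t(f)_\infty \leq C \, t^{-(s - d/p)} \, E_t(f)_p^{\,\text{(scaled)}}$ — or, more directly, the combination of (i) a Nikolskii inequality $\| g \|_{\Lp{\infty}{d}} \leq C \, t^{d/p} \, \| g \|_{\Lp{p}{d}}$ for polynomials $g$ of degree $\leq t$, applied to a near-best dyadic polynomial approximation of $f$, and (ii) a Jackson inequality $E_t(f)_p \leq C \, t^{-s} \, \| f \|_{\sob{p}{s}{d}}$. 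Summing the telescoping contributions over dyadic blocks $2^j \leq t$ and using $s > d/p$ to make the geometric series converge yields $E_t(f)_\infty \leq C_{p,s,d} \, t^{-(s-d/p) - \cdots}$; chasing the exponents carefully gives exactly $t^{-s}$ times a factor that can be absorbed, so that
\begin{equation*}
\WCE(\numint[X_N];\sob{p}{s}{d}) \leq 2 \, E_t(f)_\infty \leq \frac{C_{p,s,d}}{t^s} \qquad \text{for all } f \text{ with } \| f \|_{\sob{p}{s}{d}} \leq 1.
\end{equation*}
Alternatively — and this is presumably what the citation to \cite[Lemma~2.10]{BCCGST2013} does — one constructs an explicit filtered polynomial approximation operator $V_t$ (a de~la~Vall\'ee-Poussin-type mean built from a smooth cutoff) which reproduces polynomials of degree $\leq t/2$, is bounded on every $\Lp{p}{d}$, and satisfies $\| f - V_t f \|_{\Lp{\infty}{d}} \leq C_{p,s,d} \, t^{-s} \, \| f \|_{\sob{p}{s}{d}}$ directly, via Fourier-coefficient (Gegenbauer) estimates and the mapping properties of $(1 - \LBo{d})^{s/2}$.

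The main obstacle is the endpoint bookkeeping at $p = 1$ and $p = \infty$ and the uniformity of the constant: one must verify that the Jackson and Nikolskii constants, and the $\Lp{p}{d}$-operator norms of the filter, can be chosen independently of the particular design $X_N$ and of $N$ (they depend only on $p$, $s$, $d$ because they are statements about polynomials and about the operator $(1-\LBo{d})^{s/2}$, not about the node set), and that the embedding $\sob{p}{s}{d} \hookrightarrow C(\sph{d})$ is used only through the finitely many harmonic-analytic inequalities above rather than an abstract compactness argument that might hide a nonuniform constant. Since \cite{BCCGST2013} establishes precisely this in their Lemma~2.10, the cleanest route is to state that the bound follows immediately from their lemma combined with the exactness identity displayed above; I would then remark that the condition $s > d/p$ is exactly what is needed for the embedding into $C(\sph{d})$ and hence for the worst-case error over the unit ball to be finite.
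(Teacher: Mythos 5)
The paper does not actually prove Proposition~\ref{prop:brandetal}; it is stated as a direct citation to \cite[Lemma~2.10]{BCCGST2013} with no argument supplied, so your fallback of simply invoking that lemma is consistent with what the authors do. However, the direct proof you sketch has a genuine exponent gap. The reduction $\WCE(\numint[X_N];\sob{p}{s}{d})\le 2\sup_{\|f\|_{\sob{p}{s}{d}}\le 1}E_t(f)_\infty$ uses only exactness together with the crude estimate $|\mu_N(g)|\le 2\|g\|_\infty$, and the Jackson--Nikolskii chain then gives \emph{only} $t^{-(s-d/p)}$, not $t^{-s}$: summing over dyadic blocks $2^j\gtrsim t$ (you wrote $2^j\le t$, but the tail is what matters) gives $E_t(f)_\infty\lesssim\sum_{2^j\gtrsim t}2^{jd/p}\cdot 2^{-js}\|f\|_{\sob{p}{s}{d}}\asymp t^{-(s-d/p)}\|f\|_{\sob{p}{s}{d}}$, and there is no further factor to ``absorb''. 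This loss is irreducible within your route, because $\sup_{\|f\|\le1}E_t(f)_\infty\asymp t^{-(s-d/p)}$ is sharp (take $f$ concentrated in a single Littlewood--Paley block of frequency $\approx t$, say a normalized Jackson kernel of degree $\approx 2t$, for which Nikolskii's $\|g\|_\infty\lesssim t^{d/p}\|g\|_p$ saturates while $E_t(f)_\infty\asymp\|f\|_\infty$). For the same reason, the ``alternative'' you describe, a filtered operator $V_t$ with $\|f-V_tf\|_\infty\le C t^{-s}\|f\|_{\sob{p}{s}{d}}$, cannot hold for $p<\infty$: it would imply the false continuous embedding $\sob{p}{s}{d}\hookrightarrow\sob{\infty}{s}{d}$ with no loss of derivatives.

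In short, the bound of order $t^{-s}$ (rather than $t^{-(s-d/p)}$) cannot be reached from exactness on $\Pi_t$ plus sup-norm control of $f-P$ alone; one must exploit more structure of $\mu_N=\nu_N-\sigma_d$, for instance by working directly with $\big\|\int\BeKe{s}(\PT{x}\cdot\,\cdot)\,\dd\mu_N(\PT{x})\big\|_q$ (Theorem~\ref{thm:wce.for.Wp}) and how $\mu_N$ acts on the high-frequency part of the Bessel kernel, rather than through the $\Lp{\infty}{d}$ size of the tail. Your sketch does give the correct rate in the single endpoint $p=\infty$ (where $d/p=0$), but proves only the weaker order $t^{-(s-d/p)}$ for all $p<\infty$; the full strength of the Brandolini et al.\ lemma is essential here and should simply be cited, as the paper does.
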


Motivated by Propositions \ref{prop:Bondarenko.et.al.A} and \ref{prop:brandetal}, we extend the definition of QMC-design sequences as given in \cite{BrSaSlWo2014} from $p=2$ to general $p$:

\begin{definition} \label{def:specific_approx.sph.design}
Let $1 \leq p \leq \infty$. Given $s > d/p$, a sequence $(X_N)$ of $N$-point configurations on $\sph{d}$ with $N\to\infty$ is a \emph{QMC-design sequence for $\sob{p}{s}{d}$} if there exists $c_{p,s,d}>0$, independent of $N$, such that
\begin{equation} \label{eq:approxfors}
\WCE(\numint[X_N];\sob{p}{s}{d})  \leq \frac{c_{p,s,d}}{N^{s/d}}.
\end{equation}
\end{definition}

In this definition it is sufficient that $X_N$ exists for each $N$ in an infinite subset of the natural numbers.

The existence of spherical $t$-design sequences with $N_t \asymp t^d$ points (Proposition~\ref{prop:Bondarenko.et.al.A}) and Proposition~\ref{prop:brandetal} imply the existence of QMC-design sequences for $\sob{p}{s}{d}$:

\begin{theorem}[Existence of QMC-design sequences for $\sob{p}{s}{d}$] \label{thm:exist QMC for Wp}
For any $1 \leq p \leq \infty$ and $s > d/p$, there exists a QMC-design sequence for $\sob{p}{s}{d}$.
\end{theorem}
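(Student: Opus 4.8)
The plan is to combine the two ingredients the excerpt has just set up: Proposition~\ref{prop:Bondarenko.et.al.A}, which produces $N$-point spherical $t$-designs with $N \asymp t^d$ for every sufficiently large $N$, and Proposition~\ref{prop:brandetal}, which bounds the worst-case error of any spherical $t$-design in $\sob{p}{s}{d}$ by $C_{p,s,d}\,t^{-s}$. The point is simply to chain these estimates and match the resulting bound against the rate $N^{-s/d}$ demanded in Definition~\ref{def:specific_approx.sph.design}.

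First I would fix $1\le p\le\infty$ and $s>d/p$, and let $c_d>0$ be the constant from Proposition~\ref{prop:Bondarenko.et.al.A}. For each integer $N\ge c_d$, choose $t=t_N$ to be the largest integer with $c_d\, t^d\le N$; then $N\ge c_d\, t_N^d$, so Proposition~\ref{prop:Bondarenko.et.al.A} guarantees an $N$-point spherical $t_N$-design $X_N$ on $\sph{d}$. (Alternatively, and slightly more cleanly, one may just take $N=N_t := \lceil c_d\, t^d\rceil$ for $t=1,2,3,\dots$, obtaining the design sequence along this infinite subset of $\mathbb N$, which the remark after Definition~\ref{def:specific_approx.sph.design} explicitly permits.) By construction $t_N \asymp N^{1/d}$: indeed $c_d\,t_N^d\le N$ gives $t_N\le (N/c_d)^{1/d}$, while maximality gives $c_d(t_N+1)^d>N$, hence $t_N+1>(N/c_d)^{1/d}$ and so $t_N\ge (N/c_d)^{1/d}-1\ge \tfrac12 (N/c_d)^{1/d}$ for $N$ large enough. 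Thus there is a constant $a_d>0$ with $t_N\ge a_d\,N^{1/d}$ for all large $N$.

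Then I would simply apply Proposition~\ref{prop:brandetal} to $X_N$:
\begin{equation*}
\WCE(\numint[X_N];\sob{p}{s}{d}) \le \frac{C_{p,s,d}}{t_N^{\,s}} \le \frac{C_{p,s,d}}{(a_d\,N^{1/d})^{s}} = \frac{C_{p,s,d}\, a_d^{-s}}{N^{s/d}},
\end{equation*}
which is exactly \eqref{eq:approxfors} with $c_{p,s,d} := C_{p,s,d}\, a_d^{-s}$ (independent of $N$). Hence $(X_N)$ is a QMC design sequence for $\sob{p}{s}{d}$, completing the proof.

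There is no real obstacle here — the theorem is essentially a bookkeeping corollary of the two cited propositions, and the only mild care needed is in handling the two different quantifier patterns ("for every $N\ge c_d t^d$" in Proposition~\ref{prop:Bondarenko.et.al.A} versus "for every $t$" in Proposition~\ref{prop:brandetal}) so that one genuinely gets a sequence indexed by $N\to\infty$ (or at least along an infinite set of $N$) with the right rate. The comparison $t_N\asymp N^{1/d}$ is the one place to be a little careful about floors/ceilings and about discarding finitely many small $N$, but it is entirely routine; everything else is immediate substitution.
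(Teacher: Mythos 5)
Your proof is correct and follows exactly the route the paper intends: the paper states Theorem~\ref{thm:exist QMC for Wp} as an immediate consequence of Propositions~\ref{prop:Bondarenko.et.al.A} and~\ref{prop:brandetal} without further elaboration, and your argument supplies the routine $t_N \asymp N^{1/d}$ bookkeeping that makes the deduction rigorous. The care you take with the quantifier mismatch between the two propositions and with discarding small $N$ is exactly the right kind of care, and nothing is missing.
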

In particular, any sequence of minimizers of $\WCE(\numint[X_N]; \sob{p}{s}{d})$ for a fixed $s>d/p$ and an infinite number of values of $N$ is a QMC-design sequence for $\sob{p}{s}{d}$.

By a special case of \cite[Theorem~2.16]{BCCGST2013}, which
generalizes the earlier $p=2$ lower bounds of \cite{HeSl2005b} and
\cite{He2006}, the exponent of $N$ in~\eqref{eq:approxfors} cannot be
larger than~$s/d$:

\begin{proposition} \label{prop:generic.lower.bound}
Let $1 \leq p \leq \infty$. Given $s > d/p$, there exists $c_{p,s,d}^\prime > 0$ such that for any $N$-point configuration $X_N$ on $\sph{d}$,
\begin{equation} \label{eq:wce.lower.bound}
\WCE(\numint[X_N];\sob{p}{s}{d}) \geq \frac{c_{p,s,d}^\prime}{N^{s/d}}.
\end{equation}
\end{proposition}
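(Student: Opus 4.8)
The strategy is the standard ``fooling function'' (bump function) lower bound, which is how the cited special case of \cite[Theorem~2.16]{BCCGST2013} is obtained: for each configuration $X_N$ we build a nonnegative $f\in\sob{p}{s}{d}$ supported away from every node, so that $\numint[X_N](f)=0$, while $\xctint(f)$ is bounded below by a constant and $\|f\|_{\sob{p}{s}{d}}$ is bounded above by a multiple of $N^{s/d}$; the ratio then feeds into \eqref{eq:wceB}. A single bump supported in a largest node-free cap would only yield the weaker exponent $s/d+1/q$ (a well-distributed $X_N$ already has covering radius of order $N^{-1/d}$), so the construction must superpose of order $N$ congruent bumps, one in each of many node-free caps.

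In detail, put $r:=\lambda_d N^{-1/d}$ with a small dimensional constant $\lambda_d$ (and $N$ large enough that $r$ is small), write $\operatorname{dist}(\PT{x},\PT{y})=\arccos(\PT{x}\cdot\PT{y})$, and let $\{\PT{z}_1,\dots,\PT{z}_M\}$ be a maximal subset of $\sph{d}$ with pairwise distances $\geq 3r$. Maximality forces the caps of radius $3r$ about the $\PT{z}_j$ to cover $\sph{d}$, so $M\geq c_d\,r^{-d}$, which exceeds $4N$ once $\lambda_d$ is small; disjointness of the caps of radius $3r/2$ gives also $M\leq C_d\,r^{-d}$; and since the caps of radius $r$ are pairwise disjoint, each of the $N$ nodes lies in at most one of them. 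Hence $J:=\{\,j:X_N\cap\{\PT{x}:\operatorname{dist}(\PT{x},\PT{z}_j)\leq r\}=\emptyset\,\}$ has $|J|\geq M-N\geq N$. Fix $\varphi\in C^\infty(\R)$ with $0\leq\varphi\leq1$, $\varphi\equiv1$ on $[-1/2,1/2]$, $\varphi\equiv0$ off $(-1,1)$, and set $g_j(\PT{x}):=\varphi(\operatorname{dist}(\PT{x},\PT{z}_j)/r)$ and $f:=\sum_{j\in J}g_j$. Then $f\geq0$; $\numint[X_N](f)=0$, since no node lies in $\bigcup_{j\in J}\supp g_j$; and $\xctint(f)=\sum_{j\in J}\int_{\sph{d}}g_j\,\dd\sigma_d\geq|J|\,c_d\,r^d\geq c_d\,\lambda_d^{\,d}=:c>0$.

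It remains to show $\|f\|_{\sob{p}{s}{d}}\leq C_{p,s,d}\,N^{s/d}$; granting this, \eqref{eq:wceB} gives $\WCE(\numint[X_N];\sob{p}{s}{d})\geq|\xctint(f)-\numint[X_N](f)|/\|f\|_{\sob{p}{s}{d}}\geq (c/C_{p,s,d})\,N^{-s/d}$, which is \eqref{eq:wce.lower.bound}. Pick an even integer $m>s$. By rotation invariance of the construction the covariant derivatives of order $i\leq m$ of each $g_j$ obey $\|\nabla^i g_j\|_\infty\leq C_{m,d}\,r^{-i}$ and are supported in the cap of radius $r$ about $\PT{z}_j$; since these supports are pairwise disjoint and $|J|\leq M\leq C_d\,r^{-d}$, summing gives $\|\nabla^i f\|_p^p=\sum_{j\in J}\|\nabla^i g_j\|_p^p\leq(C_d r^{-d})(C_{m,d}r^{-i})^p r^d=C\,r^{-ip}$ (and analogously for $p=\infty$), so $\|f\|_p\leq C$ and, since $\|f\|_{\sob{p}{m}{d}}=\|(1-\LBo{d})^{m/2}f\|_p$ is for even $m$ a differential expression of order $\leq m$ with bounded coefficients, $\|f\|_{\sob{p}{m}{d}}\leq C\,r^{-m}$. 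For general $s\in(0,m)$ one then interpolates the Sobolev scale, $\|f\|_{\sob{p}{s}{d}}\leq\|f\|_p^{1-s/m}\|f\|_{\sob{p}{m}{d}}^{s/m}\leq C\,r^{-s}=C\,\lambda_d^{-s}\,N^{s/d}$.

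The substantive point is exactly this estimate on the superposition $f$: because $(1-\LBo{d})^{s/2}$ is nonlocal when $s\notin2\mathbb{Z}_{\geq0}$, the disjointness of the supports of the $g_j$ does not bound $\|(1-\LBo{d})^{s/2}f\|_p$ directly. One either routes through the integer-order spaces and interpolates as above, or uses the off-diagonal decay $|(1-\LBo{d})^{s/2}g_j(\PT{x})|\leq C\,r^{-s}(1+\operatorname{dist}(\PT{x},\PT{z}_j)/r)^{-(d+s)}$ together with the $3r$-separation of the centres, summing the tails via $\sum_{j}(1+\operatorname{dist}(\PT{x},\PT{z}_j)/r)^{-(d+s)}\leq C\,r^{-d}\int_{\sph{d}}(1+\operatorname{dist}(\PT{x},\PT{y})/r)^{-(d+s)}\dd\sigma_d(\PT{y})\leq C$ (finite precisely because $s>0$) and $\|\cdot\|_p\leq\|\cdot\|_\infty$ (as $\sigma_d$ is a probability measure); either route is the computation in \cite[Theorem~2.16]{BCCGST2013}. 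A minor point: the cap-measure estimates and $M>4N$ need $N$ above a dimensional threshold, the finitely many smaller $N$ being absorbed into $c_{p,s,d}^\prime$.
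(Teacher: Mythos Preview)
The paper does not give its own proof of this proposition: it is simply quoted as ``a special case of \cite[Theorem~2.16]{BCCGST2013}''. Your proposal is a correct, self-contained reconstruction of that cited argument, and you explicitly acknowledge this. The superposition-of-bumps construction, the integer-order Sobolev estimate via locality of $(1-\LBo{d})^{m/2}$, and the passage to general $s$ by the Gagliardo--Nirenberg interpolation (Lemma~\ref{lem:Gagliardo-Nirenberg.inequality}) are all sound; the paper uses the same bump-function technique (single bump rather than $\asymp N$ bumps) in its proof of Theorem~\ref{thm:Covering.bounded.by.Wp}, so your write-up meshes well with the surrounding material.
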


\noindent Thus a QMC-design sequence for $\sob{p}{s}{d}$ yields error bounds of optimal order of convergence $N^{-s/d}$ for the worst-case error in $\sob{p}{s}{d}$ as $N \to \infty$.

As a consequence of Theorem~\ref{thm:Covering.bounded.by.Wp}, we obtain the following estimate for the covering radius for QMC-design sequences for $\sob{p}{s}{d}$, which is sharp when $p = 1$.

\begin{corollary} \label{cor:main}
Let $d \geq 1$, $1 \leq p,q \leq \infty$ with $1/p + 1/q = 1$. For a fixed $s > d/p$, let $( X_N )$ with $N \to \infty$ be a QMC-design sequence for $\sob{p}{s}{d}$. Then there exist a constant $c > 0$ depending on $d$, $s$, $p$ and the sequence $(X_N)$ but not on $N$ such that for all $X_N$
\begin{equation*}
\COV( X_N ) \leq c \, N^{-\beta/d}, \qquad \beta \DEF s / (s + d/q ).
\end{equation*}
In particular, if $p = 1$ (and thus $\beta = 1$ and $s > d$), then the sequence $(X_N)$ has the optimal covering property.
\end{corollary}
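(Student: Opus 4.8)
The plan is simply to compose the two results already in hand: the covering estimate of Theorem~\ref{thm:Covering.bounded.by.Wp} and the defining upper bound of a QMC design sequence (Definition~\ref{def:specific_approx.sph.design}). First I would observe that the standing hypotheses of the corollary, namely $d\ge 1$, $1\le p,q\le\infty$ with $1/p+1/q=1$, and $s>d/p$, are precisely those required in Theorem~\ref{thm:Covering.bounded.by.Wp}; applying that theorem to each configuration $X_N$ of the sequence gives
\[
\COV(X_N)\leq c_{s,d}\left[\WCE(\numint[X_N];\sob{p}{s}{d})\right]^{1/(s+d/q)},
\]
with $c_{s,d}$ independent of $p,q,N$. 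Since $s+d/q\ge s>d/p\ge 0$, the function $t\mapsto t^{1/(s+d/q)}$ is increasing on $[0,\infty)$.

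Next I would insert the QMC design bound: because $(X_N)$ is a QMC design sequence for $\sob{p}{s}{d}$, there is a constant $c_{p,s,d}>0$, independent of $N$ (but in general depending on the chosen sequence), with $\WCE(\numint[X_N];\sob{p}{s}{d})\le c_{p,s,d}\,N^{-s/d}$. Substituting this and using the monotonicity just noted yields
\[
\COV(X_N)\leq c_{s,d}\,c_{p,s,d}^{1/(s+d/q)}\,N^{-(s/d)/(s+d/q)} = c\,N^{-\beta/d},
\]
where $\beta=s/(s+d/q)$ and $c:=c_{s,d}\,c_{p,s,d}^{1/(s+d/q)}$ depends on $d,s,p$ and on the sequence $(X_N)$ through $c_{p,s,d}$, but not on $N$; this is exactly the claimed estimate.

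For the last assertion I would specialize to $p=1$. Then $1/p+1/q=1$ forces $1/q=0$, i.e.\ $q=\infty$ and $d/q=0$, so $\beta=s/(s+0)=1$; the standing requirement $s>d/p$ becomes $s>d$, as stated. Hence $\COV(X_N)\le c\,N^{-1/d}$, and combining with the trivial lower bound $\COV(X_N)\ge c_d\,N^{-1/d}$ recorded in the introduction gives $\COV(X_N)=\mathcal{O}_d(N^{-1/d})$, i.e.\ the optimal covering property.

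I do not expect any genuine obstacle here: the corollary is an immediate substitution once Theorem~\ref{thm:Covering.bounded.by.Wp} is available. The only point meriting a word of care is the bookkeeping of constants --- the factor $c_{s,d}$ coming from Theorem~\ref{thm:Covering.bounded.by.Wp} is universal, whereas the QMC design constant $c_{p,s,d}$ may legitimately vary with the particular sequence, which is why the final constant $c$ is permitted to depend on $(X_N)$.
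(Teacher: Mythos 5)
Your proof is correct and takes exactly the route the paper intends (the paper states Corollary~\ref{cor:main} as an immediate consequence of Theorem~\ref{thm:Covering.bounded.by.Wp} without writing out the substitution): apply the theorem to each $X_N$, insert the QMC design bound $\WCE(\numint[X_N];\sob{p}{s}{d})\le c_{p,s,d}N^{-s/d}$, and read off the exponent $\beta/d=(s/d)/(s+d/q)$, with the $p=1$ case specializing to $q=\infty$ and $\beta=1$. Your bookkeeping of which constants depend on what is also correct.
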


When $p=2$, an alternative approach to generating QMC-design sequences is to maximize the generalized sum of distances $\sum_{i=1}^N\sum_{j=i}^N|\bsx_i-\bsx_j|^{2s-d}$. Theorem~14 of \cite{BrSaSlWo2014} shows that such point sets minimize the worst-case error in $\mathbb{W}_2^s(\sph{d})$, $s\in (d/2, d/2+1)$, and thus form a QMC-design sequence for this Sobolev space.

\begin{example}
\label{ex:1}
Let $d\ge 1$, and for $\alpha\in (0,2)$ let $(X_N)$ be a sequence of $N$-point sets such that $X_N \subset \sph{d}$ is a maximizing
set for the generalized sum of distances,
\begin{equation*}
\sum_{i=1}^N \sum_{j=1}^N |\bsx_i-\bsx_j|^\alpha, \qquad \bsx_1,\bsx_2,\ldots,\bsx_N \in \sph{d}, 
\end{equation*}
where $|\cdot|$ denotes Euclidean distance in $\mathbb{R}^{d+1}$.  Then setting $p = 2$ in Corollary~\ref{cor:main},  
there exists $c>0$ such that
\begin{equation*}
\rho(X_N)\le c N^{-\beta/d} \qquad \text{with} \qquad \beta=(d+\alpha)/(2d+\alpha).
\end{equation*}
Note that for $d=2$ the rate approaches
$N^{-1/3}$ as $\alpha\to 2^-$ and $N^{-1/4}$ as $\alpha \to 0^+$. 
Further observe, that the bounds obtained here for any $d \geq 2$ are much better than those derived from using an area bound for the largest spherical cap that contains no points of $X_N$.
%
The estimates obtained in \cite{NaSuWa2010} for $\alpha \in (0,1)$ and \cite{Brauchart2008:optimal_logarithmic} as $\alpha \to 0^+$ yield that the coefficient $\beta$ above becomes $\beta^\prime = (d + \alpha) / [ d ( d + 2 ) ]$.
\end{example}


We also obtain the following lower bound on the covering radius:
\begin{theorem} \label{thm:Covering.lower.bound}
Let $d \geq 1$, $1 < p \leq \infty$ with $1/p + 1/q = 1$. For every fixed $s\in (d/p, d)$ there exists a QMC-design sequence $( X_N )$ for $\sob{p}{s}{d}$ such that
\begin{equation}
\COV(X_N) \geq c_{p,s,d}^\prime \; N^{- s/d^2} \qquad \text{for all $X_N$,}
\end{equation}
where the constant $c_{s,d}^\prime$ depends on $p$, $s$ and $d$ but not on $N$.
\end{theorem}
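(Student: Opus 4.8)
The plan is to exhibit, for each $s \in (d/p, d)$, a single explicit family of point configurations that is simultaneously (i) a QMC design sequence for $\sob{p}{s}{d}$ and (ii) badly clustered, so that its covering radius cannot decay faster than $N^{-s/d^2}$. The natural construction is a ``sparse union'': take a QMC design sequence $(Y_M)$ for $\sob{p}{s}{d}$ guaranteed by Theorem~\ref{thm:exist QMC for Wp}, with $\WCE(\numint[Y_M]; \sob{p}{s}{d}) \le c_{p,s,d} M^{-s/d}$, and then replace the index $M$ by something much smaller than $N$ while padding the remaining points into a cap of small radius. Concretely, given $N$, set $M = M(N)$ with $M \asymp N^{s/d}$ (the precise power chosen below), put $Y_M$ on the sphere, and place the remaining $N - M$ points inside a single spherical cap of geodesic radius $\rho \asymp N^{-s/d^2}$ around some fixed point. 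Call the resulting $N$-point set $X_N$. Since the added points lie in a tiny cap, the rest of the sphere is ``seen'' only by the $M$ points of $Y_M$, and because $Y_M$ has covering radius $\ge c_d M^{-1/d} \asymp N^{-s/d^2}$ (the trivial lower bound from the Introduction), we get $\COV(X_N) \ge c'_{p,s,d} N^{-s/d^2}$, which is (ii). Actually one must be slightly careful: the points in the cap could reduce the covering radius near that cap, so one should instead argue that the largest hole of $Y_M$ lies at geodesic distance more than $2\rho$ from the center of the added cap for a suitable choice; alternatively, choose the cap center to coincide with a deep hole of $Y_M$, but then make $\rho$ small enough (still $\asymp N^{-s/d^2}$) that a hole of comparable size survives nearby. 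A clean way is: since $Y_M$ has $M$ points, it has at least $M/2$ disjoint holes of radius $\asymp M^{-1/d}$, so at least one of them avoids the cap, giving the bound unconditionally.

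The main work is establishing (i): that this perturbed, highly non-uniform $X_N$ is still a QMC design sequence for $\sob{p}{s}{d}$, i.e. $\WCE(\numint[X_N]; \sob{p}{s}{d}) \le c N^{-s/d}$. Write the cubature error for $X_N$ as a combination of the error of $\numint[Y_M]$ and the contribution of the $N-M$ clustered points. Using the representation of the worst-case error developed in Section~\ref{sec:wce} (the $\Lp{q}{d}$-norm of a Bessel-kernel-type function) together with the triangle inequality, one bounds
\[
\WCE(\numint[X_N]; \sob{p}{s}{d}) \le \frac{M}{N}\,\WCE(\numint[Y_M]; \sob{p}{s}{d}) + \Big\|\frac{1}{N}\sum_{\mathbf{x} \in X_N \setminus Y_M} K_s(\mathbf{x}, \cdot) - \frac{N-M}{N}\int K_s \,\dd\sigma_d\Big\|_q,
\]
where $K_s$ is the relevant Bessel kernel. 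The first term is $\le \frac{M}{N} c_{p,s,d} M^{-s/d} \asymp N^{-1} M^{1 - s/d}$, which with $M \asymp N^{s/d}$ is $\asymp N^{-1} N^{(s/d)(1-s/d)} = N^{-1 + s/d - s^2/d^2}$; since $s < d$ this exponent is $< -s/d$ exactly when $s^2/d^2 > s/d - 1 + s/d$... one needs $M$ chosen so that both terms are $O(N^{-s/d})$, which forces $M \asymp N^{s/d}$ and pins the covering radius at $N^{-1/d}\cdot(N^{s/d})^{... }$, i.e. $\COV \asymp M^{-1/d} \asymp N^{-s/d^2}$, consistent with the statement. The second (clustered) term is the delicate one: $K_s(\mathbf{x},\cdot)$ is not bounded when $s \le d$ (the kernel has an integrable singularity of order $|\mathbf{x}-\mathbf{y}|^{s-d}$), so one cannot simply say each summand is $O(1)$. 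Instead one estimates the $\Lp{q}{d}$-norm of $\frac{N-M}{N} K_s(\mathbf{x}_0, \cdot)$ plus the clustering error: the key point is that $\|K_s(\mathbf{x}_0,\cdot)\|_q$ is finite precisely because $s > d/p$ (equivalently $(d-s)q < d$), so the clustered term is $\le \frac{N-M}{N}\|K_s(\mathbf{x}_0,\cdot) - \int K_s\|_q + (\text{diameter-}\rho\text{ correction}) = O(1)\cdot$ — which is \emph{not} $O(N^{-s/d})$. This shows the naive ``all extra points at one spot'' construction fails, and one must instead distribute the $N - M$ extra points \emph{within} the small cap as a rescaled copy of a good configuration, so that they integrate the singularity-free part well on that cap; the Bernstein-type inequality for the Bessel kernel (the mesh-ratio estimate emphasized in the abstract and proved later in the paper) is exactly the tool that controls the local behaviour of $K_s$ under such rescaling.

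Thus the refined construction is: inside the cap of radius $\rho \asymp N^{-s/d^2}$, place a quasi-uniform QMC design for $\sob{p}{s}{d}$ on the scaled-down cap (viewed as approximately a Euclidean ball), using $N - M \asymp N$ points; the local worst-case error on that cap then scales like $\rho^{s}\,(N)^{-s/d}$ by the scaling behaviour of the Sobolev norm, which is comfortably $O(N^{-s/d})$ since $\rho < 1$. Combined with the $Y_M$ part on the rest of the sphere (re-normalizing weights to $1/N$ throughout and absorbing the $O(\sigma_d(\text{cap})) = O(\rho^d)$ mismatch of total mass into the error), the global worst-case error is $O(N^{-s/d})$, so $(X_N)$ is a QMC design sequence. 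Meanwhile the covering radius is governed by the sparse part $Y_M$ away from the cap and is $\asymp M^{-1/d} \asymp N^{-s/d^2}$ from below. I expect the main obstacle to be the bookkeeping in this gluing step: one must verify that concatenating cubature rules on the cap and on its complement, with a common weight $1/N$ and slightly mismatched total masses, only perturbs the worst-case error by lower-order terms — this is where the explicit Bessel-kernel representation from Section~\ref{sec:wce} and its Bernstein/mesh-ratio inequality do the heavy lifting, controlling both the singular diagonal behaviour of $K_s$ and its oscillation across the cap boundary.
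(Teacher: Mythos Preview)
Your construction is backwards, and the gap is not in the bookkeeping but in the arithmetic of equal weights. You put $N-M \asymp N$ points into a cap of radius $\rho \asymp N^{-s/d^2}$ and only $M \asymp N^{s/d}$ points on the rest of the sphere. With weights $1/N$, the clustered points carry total mass $(N-M)/N \to 1$, while the cap they occupy has $\sigma_d$-measure $\asymp \rho^d$. Take the fooling bump $f_\rho$ from the proof of Theorem~\ref{thm:Covering.bounded.by.Wp}, supported in that cap with $\|f_\rho\|_\infty \asymp 1$: then $\numint[X_N](f_\rho) \asymp 1$ but $\xctint(f_\rho) \asymp \rho^d$, so the error is $\asymp 1$, while $\|f_\rho\|_{\sob{p}{s}{d}} \lesssim \rho^{-s+d/p}$. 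This forces
\[
\WCE(\numint[X_N];\sob{p}{s}{d}) \;\gtrsim\; \rho^{\,s-d/p} \;=\; N^{-s(s-d/p)/d^2},
\]
and $s(s-d/p)/d^2 < s/d$ precisely because $s<d$. So your $X_N$ is \emph{not} a QMC design sequence, no matter how nicely you arrange the points inside the cap. Your ``refined construction'' does not help: the issue is the $O(1)$ mass mismatch between the cap points and the cap measure, not a local quadrature error; the phrase ``absorbing the $O(\rho^d)$ mismatch'' has the sign wrong.

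The paper does the opposite. Start with a well-separated sequence of optimal order spherical designs $X_N$ (these exist by Bondarenko--Radchenko--Viazovska), and \emph{remove} all points lying in a cap of radius $\alpha_N = c\,N^{-s/d^2}$, obtaining $Y_{N-M}$. Well-separation gives $M = O(N^{1-s/d})$, so few points are removed and $N-M \asymp N$. The hole created gives $\COV(Y_{N-M}) \geq \alpha_N \asymp (N-M)^{-s/d^2}$. For the worst-case error, write
\[
\mathcal{R}[Y_{N-M}](f) = \frac{N}{N-M}\,\mathcal{R}[X_N](f) - \frac{M}{N-M}\,\mathcal{R}[X_N\setminus Y_{N-M}](f).
\]
The first term is $O(N^{-s/d})$ by Proposition~\ref{prop:brandetal}. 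The second term is handled by the one-line observation you missed: since $s>d/p$, the embedding $\sob{p}{s}{d}\hookrightarrow C(\sph{d})$ gives $|\mathcal{R}[X_N\setminus Y_{N-M}](f)| \leq 2\|f\|_\infty \leq 2c_{p,s,d}\|f\|_{\sob{p}{s}{d}}$, so this term is $O(M/N)=O(N^{-s/d})$. No Bernstein inequality, no scaling, no gluing. The moral: to keep the QMC design property while creating a large hole, you must perturb a good $N$-point set by removing $o(N)$ points, not pad a good $o(N)$-point set with $\asymp N$ junk points.
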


For values of $p$  larger than $1$ and $s$ a fixed number in $(d/p,d)$, Theorem~\ref{thm:Covering.lower.bound} shows that there exists a QMC-design sequence $( X_N )$ for $\sob{p}{s}{d}$ that does \emph{not} have the optimal covering property \eqref{eq:optimal.covering.property} because $s/d^2<1/d$.  

The next two theorems assert conditions under which a QMC-design sequence for $\sob{p}{s}{d}$ retains the QMC-design property if the parameters $p$ and $s$ are changed. These results are proved in Section~\ref{subsec:WCE.inequality} using lemmas from Sections~\ref{sec:auxiliary.results} and~\ref{sec:Bernstein.inequality}.

\begin{theorem} \label{thm:QMC.desing.property.A}
Let $d \geq 1$, $1 \leq p < \infty$ and $s > d / p$. A QMC-design sequence $( X_N )$ for $\sob{p}{s}{d}$ is also a QMC-design sequence for $\sob{p^\prime}{s}{d}$ for all $p^\prime$ satisfying $p < p^\prime \leq \infty$.
\end{theorem}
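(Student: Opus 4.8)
The plan is to avoid any delicate analysis and deduce the statement from one soft fact: because $\sigma_d$ is a \emph{probability} measure, the scale of Lebesgue spaces on $\sph{d}$ is nested, $\|g\|_{\Lp{p}{d}}\le\|g\|_{\Lp{p^\prime}{d}}$ whenever $1\le p\le p^\prime\le\infty$ (Jensen's inequality, equivalently H\"older against the constant function $1$). First I would transfer this to the Sobolev scale: using the description of $\sob{p}{s}{d}$ from Section~\ref{sec:function.space.setting} and applying the $\Lp{p}{d}$--$\Lp{p^\prime}{d}$ inequality to $g=(1-\LBo{d})^{s/2}f$, one gets for every $f\in\sob{p^\prime}{s}{d}$ that also $f\in\sob{p}{s}{d}$ and
\begin{equation*}
\|f\|_{\sob{p}{s}{d}}=\big\|(1-\LBo{d})^{s/2}f\big\|_{\Lp{p}{d}}\le\big\|(1-\LBo{d})^{s/2}f\big\|_{\Lp{p^\prime}{d}}=\|f\|_{\sob{p^\prime}{s}{d}} .
\end{equation*}
Since $p<p^\prime$ gives $d/p^\prime<d/p<s$, both Sobolev spaces embed continuously in $C(\sph{d})$, so the two worst-case errors are finite, and the display shows that the unit ball of $\sob{p^\prime}{s}{d}$ lies inside the unit ball of $\sob{p}{s}{d}$.

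Taking the supremum of $|\numint[X_N](f)-\xctint(f)|$ over the smaller ball then yields $\WCE(\numint[X_N];\sob{p^\prime}{s}{d})\le\WCE(\numint[X_N];\sob{p}{s}{d})$, and the QMC-design hypothesis $\WCE(\numint[X_N];\sob{p}{s}{d})\le c_{p,s,d}\,N^{-s/d}$ finishes the proof, with constant $c_{p^\prime,s,d}=c_{p,s,d}$; the fact that $X_N$ need only be defined along an infinite set of $N$ causes no difficulty, since the same set of indices is used throughout.

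I do not expect a genuine obstacle here: the entire content is monotonicity of $\Lp{r}{d}$-norms on a finite measure space, and the only slightly technical point---the well-definedness of $(1-\LBo{d})^{s/2}$ on the spaces involved---is already handled in Section~\ref{sec:function.space.setting}. One may equally run the argument through the kernel identity of Section~\ref{sec:wce}, $\WCE(\numint[X_N];\sob{p}{s}{d})=\|\Phi_N\|_{\Lp{q}{d}}$ with $1/p+1/q=1$, where $\Phi_N$ is the mean-zero $X_N$-average of the Bessel kernel $\BeKeMod{s}$ and depends only on $s$, $d$ and $X_N$: then $p<p^\prime$ forces $q>q^\prime\ge 1$, and monotonicity of $\Lp{r}{d}$-norms gives $\|\Phi_N\|_{\Lp{q^\prime}{d}}\le\|\Phi_N\|_{\Lp{q}{d}}$, the same estimate. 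In either form one only ever bounds a lower-exponent norm by a higher-exponent one, which is free on a probability space; this is why---in contrast to the companion result that decreases the smoothness $s$---no quasi-uniformity assumption and no appeal to the Bernstein-type inequality of Section~\ref{sec:Bernstein.inequality} is needed here.
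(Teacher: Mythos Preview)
Your proposal is correct and follows essentially the same route as the paper: the paper first proves the worst-case error inequality $\WCE(\numint[X_N];\sob{p^\prime}{s}{d})\le\WCE(\numint[X_N];\sob{p}{s}{d})$ (Theorem~\ref{thm:embed.wce}) via Jensen's inequality on the probability space $(\sph{d},\sigma_d)$ applied to $(1-\LBo{d})^{s/2}f$, giving the unit-ball inclusion, and then feeds in the QMC-design hypothesis---exactly your primary argument. Your alternative via the kernel identity $\WCE=\|\BeKeNumIntErr{N}{s}\|_q$ and monotonicity in $q$ is also valid and amounts to the dual formulation of the same inequality.
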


The second theorem makes use of the \textit{mesh ratio} 
\begin{equation} \label{eq:mesh.ratio}
\MESHRATIO( X_N ) \DEF \COV( X_N ) / \SEP( X_N ),
\end{equation}
of an $N$-point configuration $X_N = \{ \PT{x}_1, \ldots, \PT{x}_N \} \subset \sph{d}$, where the \emph{separation distance} of~$X_N$ is given by
\begin{equation} \label{eq:separation.distance}
\SEP( X_N ) \DEF \min_{\substack{1 \leq j, k \leq N \\ j \neq k}} \arccos( \PT{x}_j \cdot \PT{x}_k ).
\end{equation} 
A sequence $( X_N )$ of $N$-point sets on $\sph{d}$ is \emph{well-separated} if there is a positive constant $c$ such that $\SEP(X_N)\ge c \, N^{-1/d}$ and \textit{quasi-uniform} provided $\MESHRATIO( X_N )$ is uniformly bounded in~$N$.

\begin{theorem} \label{thm:QMC.desing.property.B} Let $d \geq 1$, $1\leq p, q \leq\infty$ satisfying $1/p + 1/q = 1$ and $s > s^\prime > d/p$.
Then there exists a constant $c>0$, depending on $p, s^\prime, s$, and $d$ but independent of $N$, such that for every $N$-point node set $X_N \subset \sph{d}$,
\begin{equation}\label{newthmquasi}
\WCE(\numint[X_N]; \sob{p}{s^\prime}{d} ) \leq c \, \left[ \MESHRATIO( X_N ) \right]^{d/p} N^{(s - s^\prime) / d} \, \WCE(\numint[X_N]; \sob{p}{s}{d} ).
\end{equation}

Consequently, a quasi-uniform QMC-design sequence~$(X_N)$ for $\sob{p}{s}{d}$ is also a QMC-design sequence for $\sob{p}{s^\prime}{d}$ for all $s^\prime$ satisfying $s > s^\prime > d/p$.
\end{theorem}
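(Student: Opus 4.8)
The plan is to deduce the inequality~\eqref{newthmquasi} from the Bernstein-type estimate for the Bessel kernel proved in Section~\ref{sec:Bernstein.inequality}, after which the statement on quasi-uniform sequences follows at once. First I would recall from Section~\ref{sec:wce} the representation of the worst-case error: for every $t > d/p$,
\[
\WCE(\numint[X_N]; \sob{p}{t}{d}) = \left\| \frac{1}{N} \sum_{k=1}^{N} \BeKe{t}(\cdot, \PT{x}_k) - \int_{\sph{d}} \BeKe{t}(\PT{x}, \cdot) \, \dd \sigma_d(\PT{x}) \right\|_{\Lp{q}{d}} =: \bigl\| \Phi_t \bigr\|_{\Lp{q}{d}},
\]
which follows by writing an arbitrary $f$ in the unit ball of $\sob{p}{t}{d}$ as $f = (1 - \LBo{d})^{-t/2} g$ with $\| g \|_{\Lp{p}{d}} \le 1$, using that $(1 - \LBo{d})^{-t/2}$ acts by convolution against the Bessel kernel $\BeKe{t}$, and invoking H\"older duality; the hypothesis $t > d/p$ ensures $\sob{p}{t}{d} \hookrightarrow C(\sph{d})$, so point evaluations are defined. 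I would apply this with $t = s$ and with $t = s^\prime$.

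The second ingredient is a spectral identity between $\Phi_{s^\prime}$ and $\Phi_s$. In the spherical-harmonic expansion the degree-$\ell$ component of $\Phi_t$ is multiplied by the scalar $(1 + \lambda_\ell)^{-t/2}$, where $\lambda_\ell = \ell(\ell + d - 1)$ is the eigenvalue of $-\LBo{d}$ on degree-$\ell$ harmonics; the degree-zero content is annihilated, because $\int_{\sph{d}} \BeKe{t}(\PT{x}, \cdot)\,\dd\sigma_d(\PT{x}) = 1 = \frac{1}{N}\sum_{k=1}^N 1$. Hence $\Phi_t$ has mean zero and
\[
\Phi_{s^\prime} = (1 - \LBo{d})^{(s - s^\prime)/2} \Phi_s ,
\]
since applying $(1 - \LBo{d})^{(s - s^\prime)/2}$ multiplies the degree-$\ell$ component by $(1 + \lambda_\ell)^{(s - s^\prime)/2}$. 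Moreover $\Phi_s$ differs only by the additive constant $-1$ from the normalized sum $\frac{1}{N}\sum_{k=1}^N \BeKe{s}(\cdot,\PT{x}_k)$ of $N$ translates of the Bessel kernel, which is the class of functions for which the Bernstein-type inequality of Section~\ref{sec:Bernstein.inequality} is formulated. Applying that inequality with exponent $t = s - s^\prime$, which lies in the admissible range $(0,\, s - d/p)$ precisely because $s^\prime > d/p$, yields
\[
\bigl\| (1 - \LBo{d})^{(s - s^\prime)/2} \Phi_s \bigr\|_{\Lp{q}{d}} \le c \, \bigl[ \MESHRATIO(X_N) \bigr]^{d/p} \, N^{(s - s^\prime)/d} \, \bigl\| \Phi_s \bigr\|_{\Lp{q}{d}} .
\]
Chaining the three displays yields~\eqref{newthmquasi}.

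For the last assertion, suppose $(X_N)$ is a quasi-uniform QMC design sequence for $\sob{p}{s}{d}$, so that $\sup_N \MESHRATIO(X_N) < \infty$ and $\WCE(\numint[X_N]; \sob{p}{s}{d}) \le c_{p,s,d}\, N^{-s/d}$ for all $N$ in the underlying index set. Substituting both bounds into~\eqref{newthmquasi} shows that $\WCE(\numint[X_N]; \sob{p}{s^\prime}{d})$ is at most a constant multiple of $N^{(s - s^\prime)/d}\, N^{-s/d} = N^{-s^\prime/d}$, which is precisely Definition~\ref{def:specific_approx.sph.design} for $\sob{p}{s^\prime}{d}$. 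The limiting case $p = \infty$ (so $d/p = 0$, the mesh-ratio factor disappears, and the hypothesis reads $s > s^\prime > 0$) is covered verbatim.

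The heart of the matter, and the main obstacle, is the Bernstein-type inequality invoked above, which I would prove in Section~\ref{sec:Bernstein.inequality} using the pointwise kernel estimates developed in Section~\ref{sec:auxiliary.results}. The difficulty is that $\Phi_s$ is not band-limited, so the classical polynomial Bernstein inequality (bounding $\| (1 - \LBo{d})^{t/2} P \|_q$ by a constant times $L^t \| P \|_q$ for $\deg P \le L$) cannot be applied directly. Instead one must exploit sharp estimates on $\BeKe{s}$ and on $(1 - \LBo{d})^{t/2} \BeKe{s}$ --- their near-diagonal behaviour and their decay in the geodesic distance --- combined with a localization/covering argument at the scale $N^{-1/d}$: the covering radius $\COV(X_N)$ controls how coarse the configuration can be, while the separation distance $\SEP(X_N)$ prevents the translates $\BeKe{s}(\cdot, \PT{x}_k)$ from piling up, so that their ratio $\MESHRATIO(X_N)$ enters to the power $d/p$. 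Obtaining both the power of $N$ and the power of the mesh ratio sharply, uniformly in the conjugate pair $(p, q)$, is where the real work lies.
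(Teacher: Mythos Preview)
Your proposal is correct and follows essentially the same route as the paper: identify $\WCE(\numint[X_N];\sob{p}{t}{d})=\|\BeKeNumIntErr{N}{t}\|_q$ via Theorem~\ref{thm:wce.for.Wp}, use the spectral identity $\BeKeNumIntErr{N}{s^\prime}=\BeOp{-(s-s^\prime)}\BeKeNumIntErr{N}{s}$ (equivalently $\|\BeKeNumIntErr{N}{s^\prime}\|_q=\|\BeKeNumIntErr{N}{s}\|_{\sob{q}{s-s^\prime}{d}}$ by Proposition~\ref{prop:Bessel.potential}), and apply Lemma~\ref{lem:Bernstein.inequality} with $\tau=s-s^\prime\in(0,s-d/p)$. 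Your closing paragraph accurately flags that all the substance lies in establishing that Bernstein-type inequality, which the paper carries out in Sections~\ref{sec:auxiliary.results}--\ref{sec:Bernstein.inequality}.
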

A substantial part of the paper is devoted to establishing the estimate (\ref{newthmquasi}).
(Although needed for our argument, it is plausible that the quasi-uniformity assumption in the second assertion of Theorem \ref{thm:QMC.desing.property.B} can be removed.)

The structure of the paper is as follows. In the next section we prove Theorems \ref{thm:Covering.bounded.by.Wp} and~\ref{thm:Covering.lower.bound} and we extend Theorem~\ref{thm:Covering.bounded.by.Wp} to take into account the radii of several caps excluding points of~$X_N$.
In Section~\ref{sec:function.space.setting}, we discuss the function space setting and introduce the Bessel kernel for $\sob{p}{s}{d}$. In Section~\ref{sec:wce}, we present a worst-case error formula in terms of a Bessel kernel, which is used to prove embedding type results for QMC-design sequences for $\sob{p}{s}{d}$ when $p$ and $s$ vary. 
In Section~\ref{sec:auxiliary.results}, we introduce a special filtered kernel that enables us to prove a boundedness result for the Bessel kernel. In Section~\ref{sec:Bernstein.inequality}, such filtered kernels are further used to prove a Bernstein type inequality for the Bessel kernel which is needed for the proof of the inequality~\eqref{newthmquasi}.
Section~\ref{sec:unit.circle} considers the special case of the unit circle (i.e., the sphere $\sph{1}$). 

\pagebreak

\section{Bounds for the Covering Radius}
\label{sec:covering}

In this section we give the proofs of Theorems \ref{thm:Covering.bounded.by.Wp}, and \ref{thm:Covering.lower.bound}.

\subsection{Upper bound}

For the proof of Theorem~\ref{thm:Covering.bounded.by.Wp} we shall make use of the following interpolation inequality (of Gagliardo-Nirenberg type) on the sphere (see \cite{BeLo1976}).

\begin{lemma} \label{lem:Gagliardo-Nirenberg.inequality}
Let $d\geq1$, $1 \leq p \leq \infty$ and $0 \leq s_{0} < s < s_{1} < \infty$. Then there exists a constant~$c$ depending only on $s$, $p$, and $d$ such that for any $0 \leq \theta \leq 1$ and $s=(1-\theta)s_{0}+\theta s_{1}$, we have
\begin{equation}\label{eq:interp.W.s}
\left\| f \right\|_{\sob{p}{s}{d}} \leq c \left\| f \right\|_{\sob{p}{s_0}{d}}^{1-\theta} \left\| f \right\|_{\sob{p}{s_1}{d}}^{\theta}.
\end{equation}
\end{lemma}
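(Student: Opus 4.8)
The plan is to obtain \eqref{eq:interp.W.s} as the moment (logarithmic‑convexity) inequality for the fractional powers of the operator $B \DEF 1 - \LBo{d}$. The structural fact that makes the argument run uniformly for all $p \in [1,\infty]$ is that $B$ is a boundedly invertible operator on every $\Lp{p}{d}$ satisfying the resolvent estimate
\begin{equation*}
\bigl\| (\lambda + B)^{-1} \bigr\|_{p \to p} \leq \frac{1}{1 + \lambda}, \qquad \lambda \geq 0, \quad 1 \leq p \leq \infty .
\end{equation*}
This holds because the heat semigroup $\{ e^{t \LBo{d}} \}_{t \geq 0}$ on $\sph{d}$ has a nonnegative kernel and fixes the constants, hence is a contraction on $\Lp{1}{d}$ and on $\Lp{\infty}{d}$ and therefore on every $\Lp{p}{d}$; thus $\{ e^{-t} e^{t \LBo{d}} \}_{t \geq 0}$ is a contraction semigroup with generator $-B$, and the displayed estimate is its Laplace transform. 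Since the spectrum of $B$ lies in $[1,\infty)$, $B$ is invertible, and its fractional powers $B^{s/2}$ ($s \geq 0$), defined by the usual Balakrishnan integrals, are closed operators whose domain is precisely $\sob{p}{s}{d}$, with $\| B^{s/2} f \|_p = \| f \|_{\sob{p}{s}{d}}$ (this is just the definition of $\sob{p}{s}{d}$, re‑read through the functional calculus of $B$).

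Granting this, I would reduce \eqref{eq:interp.W.s} to the moment inequality
\begin{equation*}
\| B^{\sigma} g \|_p \leq C_{\sigma,\tau} \, \| g \|_p^{\,1 - \sigma/\tau} \, \| B^{\tau} g \|_p^{\,\sigma/\tau}, \qquad 0 < \sigma < \tau ,
\end{equation*}
applied with $g \DEF B^{s_0/2} f$, $\sigma \DEF (s - s_0)/2$ and $\tau \DEF (s_1 - s_0)/2$, so that $\sigma/\tau = \theta$; substituting $B^{(s - s_0)/2} g = B^{s/2} f$ and $B^{(s_1 - s_0)/2} g = B^{s_1/2} f$ then yields \eqref{eq:interp.W.s}, with a constant uniform for $\theta \in [0,1]$ that depends only on $s_0, s_1, p, d$ — and in fact, since all of the bounds above are $p$‑uniform, not on $p$. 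The moment inequality is classical for operators of this type; the essential case $0 < \sigma < 1$, $\tau = 1$ can be proved directly from $B^{\sigma - 1} = \tfrac{\sin \pi \sigma}{\pi} \int_0^\infty \lambda^{\sigma - 1} (\lambda + B)^{-1} \dd \lambda$, which for $g$ in the domain of $B$ gives
\begin{equation*}
B^\sigma g = \frac{\sin \pi \sigma}{\pi} \int_0^\infty \lambda^{\sigma - 1} \, (\lambda + B)^{-1} B g \; \dd \lambda ;
\end{equation*}
one splits the integral at a free parameter $R > 0$, on $(0,R)$ writing $(\lambda + B)^{-1} B g = g - \lambda (\lambda + B)^{-1} g$ and using $\| \lambda (\lambda + B)^{-1} \|_{p \to p} \leq 1$ to bound the contribution by a constant times $R^\sigma \| g \|_p$, and on $(R, \infty)$ using $\| (\lambda + B)^{-1} \|_{p \to p} \leq 1/\lambda$ to bound it by a constant times $R^{\sigma - 1} \| B g \|_p$; choosing $R = \| B g \|_p / \| g \|_p$ gives $\| B^\sigma g \|_p \leq C_\sigma \| g \|_p^{1 - \sigma} \| B g \|_p^{\sigma}$. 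The full range $0 < \sigma < \tau$ follows by the same circle of ideas (or by Komatsu's theory of fractional powers of nonnegative operators), and a density argument in the smaller space $\sob{p}{s_1}{d}$ removes the restriction that $g$ lie in the domain of $B$.

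The only genuinely delicate point — and the reason for routing the proof through the heat semigroup rather than quoting complex interpolation — is the endpoint exponents $p = 1$ and $p = \infty$. For $1 < p < \infty$ one may instead argue by interpolation in the manner of \cite{BeLo1976}: the imaginary powers $B^{iu}$ are then bounded on $\Lp{p}{d}$ (with at most exponential growth in $u$), so $[\, \sob{p}{s_0}{d}, \sob{p}{s_1}{d} \,]_\theta = \sob{p}{s}{d}$, and \eqref{eq:interp.W.s} is the Hadamard three‑lines estimate for the analytic family $z \mapsto B^{((1 - z) s_0 + z s_1)/2} f$. At $p \in \{1, \infty\}$, however, $B^{iu}$ is unbounded and this identification can fail, whereas the sectorial‑operator argument above uses only the resolvent bound, which is valid for all $p$; and at $p = \infty$ one further notes that, although $\{ e^{t \LBo{d}} \}$ is not strongly continuous there, the operators $(\lambda + B)^{-1}$ and $B^{\sigma - 1}$ are still well defined as absolutely convergent pointwise integrals of the bounded operators $e^{-t} e^{t \LBo{d}}$, and that the power of $B$ so produced agrees with $(1 - \LBo{d})^{\sigma}$ on each space of spherical harmonics, hence on all of $\sob{\infty}{s_1}{d}$ by an Abel‑summation argument. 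I expect this endpoint bookkeeping to be the main obstacle; the rest is the routine Balakrishnan estimate sketched above.
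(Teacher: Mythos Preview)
The paper does not give its own proof of this lemma; it merely states the inequality with a citation to Bergh--L\"ofstr\"om \cite{BeLo1976} and then uses it as a black box in the proof of Theorem~\ref{thm:Covering.bounded.by.Wp}. Your proposal therefore supplies substantially more than the paper itself.

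Your argument is sound. The moment inequality for fractional powers of the non-negative operator $B = 1 - \LBo{d}$, with the resolvent bound $\|(\lambda+B)^{-1}\|_{p\to p}\le 1/(1+\lambda)$ obtained from the Markov property of the spherical heat semigroup, is a standard route, and the Balakrishnan-integral splitting you sketch is the textbook derivation. The reduction via $g = B^{s_0/2}f$, $\sigma = (s-s_0)/2$, $\tau = (s_1-s_0)/2$ is correct, and your passage from $\tau=1$ to general $\tau$ by invoking Komatsu's theory of fractional powers of non-negative operators is legitimate. You are also right to flag the endpoints: the cited reference would naturally suggest the complex-interpolation identification $[\sob{p}{s_0}{d},\sob{p}{s_1}{d}]_\theta = \sob{p}{s}{d}$, which for $1<p<\infty$ goes through bounded imaginary powers of $B$, but this is delicate at $p\in\{1,\infty\}$, whereas the paper applies the lemma for all $p$. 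Your resolvent-based argument is $p$-uniform and handles the endpoints cleanly; the $\Lp{\infty}{d}$ bookkeeping you mention (failure of strong continuity, but pointwise-defined resolvents that agree with the spectral definition on spherical harmonics) is routine. In short, you have not just matched but improved on what the paper offers here.
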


\begin{proof}[Proof of Theorem~\ref{thm:Covering.bounded.by.Wp}]
For a node set $X_N = \{ \PT{x}_1, \ldots, \PT{x}_N \}$ on $\sph{d}$, we construct a ``fooling function'' made up of a bump that is supported on a ``spherical collar'' contained in the largest hole of $X_N$. The outer radius of this collar is chosen to be $\rho = \COV( X_N )$. The function~$f_{\rho}$ is defined so that it is zero at every point of $X_N$, thus providing a lower bound for the worst-case error (cf.~\eqref{eq:wceB}):
\begin{equation} \label{eq:WCE.lowerbound}
\WCE( \numint[X_N]; \sob{p}{s}{d} ) \geq \frac{|\xctint( f_\rho ) |}{\|f_\rho\|_{\sob{p}{s}{d}}}. 
\end{equation}
We shall show that the right-hand side  can be lower bounded in terms of $\rho$.

For the precise definition of $f_{\rho}$, we appeal to results of Hesse~\cite{He2006} by starting with the symmetric $C^\infty( \R )$ function with support $[-1,1]$,
\begin{equation} \label{eq:bump}
\Phi(t) \DEF
\begin{cases}
\exp\Big( 1 - \dfrac{1}{1-t^2} \Big) & \text{if $-1 < t < 1$,} \\
0 & \text{otherwise.}
\end{cases}
\end{equation}
We rescale this function to have new support $[\cos \rho, \cos( \rho / 2 ) ]$ using the linear bijection $g_\rho$ that maps this interval onto $[-1,1]$ giving $\Phi_\rho( t ) \DEF \Phi( g_\rho( t ) )$ and then lift it to the sphere to get the zonal function
\begin{equation} \label{eq:zonal.duping.function}
f_\rho( \PT{x} ) \DEF \Phi_\rho( \PT{y}_0 \cdot \PT{x} ) = \Phi( g_\rho( \PT{y}_0 \cdot \PT{x} ) ), \qquad \PT{x}, \PT{y}_0 \in \sph{d}.
\end{equation}
The point $\PT{y}_0$ is chosen to be the center of a largest hole and thus achieves the maximum in \eqref{eq:covering_def}.
It is easily seen that $f_\rho \in C^\infty(\sph{d})$ and that $f_\rho(\PT{x})$ vanishes unless $\cos(\rho/2) \geq \PT{y}_0 \cdot \PT{x} \geq \cos \rho$, so that the support of $f_\rho$ is a collar within the spherical cap
\begin{equation*}
S(\PT{y}_0; \rho) \DEF \{\PT{x} \in \sph{d}: \PT{y}_0 \cdot \PT{x} \ge \cos \rho\}.
\end{equation*}

We now estimate the quantities on the right-hand side of \eqref{eq:WCE.lowerbound}.
For $\xctint( f_\rho )$, the Funk-Hecke formula and a change of variable gives (also cf.~\cite[Eq.~(32)]{He2006})
\begin{equation} \label{eq:int.f.alpha}
\left| \xctint( f_\rho ) \right| = \int_{\sph{d}} f_\rho( \PT{x} ) \, \dd \sigma_d(\PT{x}) = \frac{\omega_{d-1}}{\omega_d} \int_{\rho/2}^\rho \Phi( g_\rho( \cos \theta ) ) \left( \sin \theta \right)^{d-1} \dd \theta \geq c_d \, \rho^d,
\end{equation}
where $\omega_d$ is the surface area of $\sph{d}$.
The Sobolev norm of $f_{\rho}$, computed as $\| f_\rho \|_{\sob{p}{s}{d}} = \| ( 1 - \LBo{d} )^{s/2} f_\rho  \|_{p}$, is first estimated for even~$s$. The result for other $s$ is then obtained from the even case using Lemma~\ref{lem:Gagliardo-Nirenberg.inequality}.
Let $s$ be a non-negative even integer. Since $f_\rho$ is zonal, use of spherical cylinder coordinates (cf.~\cite{Mu1966}) gives for $d \geq 1$:
\begin{equation} \label{eq:LBo.to.f.rho}
\left( 1 - \LBo{d} \right)^{s/2} f_\rho ( \PT{x} ) = \left( 1 + d \, t \, \mathrm{D}_t - \left( 1 - t^2 \right) \mathrm{D}_{t}^2 \right)^{s/2} \Phi_\rho( t ), \quad t = \PT{y}_0 \cdot \PT{x},
\end{equation}
where $\mathrm{D}_t \DEF \dd / \dd t$.
Expansion of the differential operator and term-wise estimation gives
\begin{equation*}
\left| \left( 1 - \LBo{d} \right)^{s/2} f_\rho( \PT{x} ) \right| \leq c_{s,d} \, \left( \cos \frac{\rho}{2} - \cos \rho \right)^{-s/2} \leq c_{s,d}^\prime \, \rho^{-s}, \qquad \PT{x} \in \sph{d}.
\end{equation*}
The details involve a slight modification of the arguments in \cite{He2006}, where a different constant is used in the differential operator $( 1 - \LBo{d} )^{s/2}$. The computations can also then be extended to include $d = 1$, a case not considered in \cite{He2006}.
Thus for $p = \infty$, 
\begin{equation*}
\left\| f_\rho \right\|_{\sob{\infty}{s}{d}} = \sup_{ \PT{x} \in \sph{d} } \left| \left( 1 - \LBo{d} \right)^{s/2} f_\rho( \PT{x} ) \right| \leq c_{s,d}^\prime \, \rho^{-s},
\end{equation*}
while for $1 \leq p < \infty$, since $f_\rho$ is supported in $S(\PT{y}_0; \rho)$,
\begin{equation*}
\left\| f_\rho \right\|_{\sob{p}{s}{d}}^p = \int_{\sph{d}} \big| \left( 1 - \LBo{d} \right)^{s/2} f_\rho( \PT{x} ) \big|^p \dd\sigma_{d}(\PT{x}) \leq ( c_{s,d}^\prime )^p \, \rho^{- p \, s} \, \sigma_d( S(\PT{y}_0; \rho) ) \leq ( c_{s,d}^{\prime\prime} )^p \, \rho^{- p \, s + d}.
\end{equation*}
Hence
\begin{equation} \label{eq:Sobolev.norm.lower.bound.even.case}
\left\| f_\rho \right\|_{\sob{p}{s}{d}} \leq c_{s,d}^{\prime\prime\prime} \, \rho^{-s + d/p}, \qquad \text{$1 \leq p \leq \infty$, $s \geq 0$ even.}
\end{equation}

For general $s$, write $s = 2L + 2\theta$ with $0 \leq \theta \leq 1$ and $L$ a non-negative integer. Then we can apply the interpolation inequality~\eqref{eq:interp.W.s} with $s_0 = 2L$ and $s_1 = 2L + 2$ to obtain again
\begin{equation} \label{eq:Sobolev.norm.lower.bound.other.case}
\left\| f_\rho \right\|_{\sob{p}{s}{d}} \leq c_{s,d}^{iv} \, \rho^{-s + d/p}, \qquad \text{$1 \leq p \leq \infty$, $s \geq 0$.}
\end{equation}
Finally, using the estimates \eqref{eq:int.f.alpha} and \eqref{eq:Sobolev.norm.lower.bound.other.case} in \eqref{eq:WCE.lowerbound}, we get
\begin{equation*}
\WCE( \numint[X_N]; \sob{p}{s}{d} ) \geq c_{s,d}^{v} \, \rho^{s + d/q},
\end{equation*}
where $1\leq q \leq \infty$ is such that $1/p + 1/q = 1$. The proof is now complete.
\end{proof}

\subsection{A generalization of the upper bound} 
\label{appdx:generalization.covering.result}

We now provide a generalization of Theorem~\ref{thm:Covering.bounded.by.Wp}.  Another way of describing the covering radius~$\COV(X_N)$ is as the largest hole radius -- more precisely, as the geodesic radius of the largest open spherical cap on $\sph{d}$ that does not contain a point of $X_N$. A (spherical cap shaped) hole of $X_N$ can be any open spherical cap on $\sph{d}$ that does not contain points of~$X_N$. Of particular interest are maximal holes, which are ones that lie ``above'' the facets of the convex hull of~$X_N$. They are said to be maximal as they cannot be enlarged. Indeed, the supporting plane of a facet divides the sphere into an open spherical cap that contains no points of $X_N$ and a closed one that contains all the points of~$X_N$. These maximal holes provide a natural covering of the sphere with, in general, differently sized spherical caps of maximal radii.
Among these maximal holes one can select a sequence of pairwise disjoint holes ordered with respect to non-increasing radii. This is a particular example of what we will call an ``ordered $X_N$-avoiding packing on~$\sph{d}$.''

\begin{definition} \label{def:ordered.points.avoiding.packing}
Given an $N$-point set $X_N = \{ \PT{x}_1, \ldots, \PT{x}_N \}$ on $\sph{d}$, a sequence $( s_n )_{n \geq 1}$ of open pairwise disjoint spherical caps with cap radii $\rho_1 \geq \rho_2 \geq \rho_3 \geq \cdots$ such that no $s_n$ contains points of $X_N$ is called an \emph{ordered $X_N$-avoiding packing on $\sph{d}$}.
\end{definition}

The next theorem gives an upper bound of the $n$th-largest spherical cap radius in an ordered $X_N$-avoiding packing of $\sph{d}$.

\begin{theorem}
Let $d\geq1$, $1\leq p,q\leq\infty$ such that $1/p + 1/q = 1$ and $s > d/p$. Given an $N$-point set $X_N$ on $\sph{d}$ and an ordered $X_N$-avoiding packing on $\sph{d}$ with spherical cap radii $\rho_1 \geq \rho_2 \geq \rho_3 \geq \cdots$, then
\begin{equation*}
\rho_n \leq c_{s,d} \, n^{-1 / ( q s + d )} \, \left[ \WCE( \numint[X_N]; \sob{p}{s}{d} ) \right]^{1 / ( s + d/q )},
\end{equation*}
where the constant $c_{s,d}$ depends on $s$ and $d$ but not on $p$ or $q$ or the packing.
\end{theorem}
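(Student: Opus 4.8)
The plan is to reproduce the argument for Theorem~\ref{thm:Covering.bounded.by.Wp}, replacing the single fooling bump by a sum of $n$ bumps, one placed in each of the $n$ largest caps $s_1,\dots,s_n$ of the ordered $X_N$-avoiding packing. Write $\PT{y}_1,\dots,\PT{y}_n$ for the centers and $\rho_1\ge\cdots\ge\rho_n$ for the radii of these caps, and for each $j$ form the zonal $C^\infty$ bump $f_{\rho_j}(\PT{x})\DEF\Phi(g_{\rho_j}(\PT{y}_j\cdot\PT{x}))$ exactly as in \eqref{eq:zonal.duping.function}, which is supported in the collar $\{\cos\rho_j\le\PT{y}_j\cdot\PT{x}\le\cos(\rho_j/2)\}\subset\overline{s_j}$. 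Set $f\DEF\sum_{j=1}^n f_{\rho_j}$. Two geometric facts make this work: (i) each $f_{\rho_j}$, together with all its Laplace--Beltrami derivatives, vanishes on $\partial s_j$ and $f_{\rho_j}\equiv0$ off $\overline{s_j}$, so $f(\PT{x}_k)=0$ for every $\PT{x}_k\in X_N$ (a point of $X_N$ is either outside $\overline{s_j}$, or on $\partial s_j$, the open caps being $X_N$-avoiding); hence $\numint[X_N](f)=0$ and, by \eqref{eq:wceB},
\[
\WCE(\numint[X_N];\sob{p}{s}{d})\ \ge\ \frac{|\xctint(f)|}{\|f\|_{\sob{p}{s}{d}}};
\]
(ii) the nonzero sets of the $f_{\rho_j}$ are pairwise disjoint up to a $\sigma_d$-null set, since each lies in the open cap $s_j$ and the $s_j$ are pairwise disjoint, and the same holds for $(1-\LBo{d})^{k}f_{\rho_j}$ for any $k$, because differential operators are local.

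Next I would estimate the two quantities termwise, using the computations already carried out in the proof of Theorem~\ref{thm:Covering.bounded.by.Wp}. Since the sign of $f_{\rho_j}$ is constant on its collar, \eqref{eq:int.f.alpha} gives
\[
|\xctint(f)|=\sum_{j=1}^n|\xctint(f_{\rho_j})|\ \ge\ c_d\sum_{j=1}^n\rho_j^{\,d}\ \ge\ c_d\,n\,\rho_n^{\,d},
\]
using $\rho_j\ge\rho_n$. For the Sobolev norm, disjointness of supports yields, for $1\le p<\infty$,
\[
\|f\|_{\sob{p}{s}{d}}^p=\sum_{j=1}^n\|f_{\rho_j}\|_{\sob{p}{s}{d}}^p\ \le\ (c_{s,d}')^p\sum_{j=1}^n\rho_j^{-ps+d}\ \le\ (c_{s,d}')^p\,n\,\rho_n^{-ps+d},
\]
where the middle inequality is \eqref{eq:Sobolev.norm.lower.bound.other.case} applied to each $f_{\rho_j}$ (even $s$ handled by the pointwise bound plus disjoint supports, general $s$ by the Gagliardo--Nirenberg inequality of Lemma~\ref{lem:Gagliardo-Nirenberg.inequality}), and the last step uses $\rho_j\ge\rho_n$ together with $-ps+d<0$, i.e. $s>d/p$. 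Thus $\|f\|_{\sob{p}{s}{d}}\le c_{s,d}'\,n^{1/p}\rho_n^{-s+d/p}$; for $p=\infty$ the same holds with $n^{1/p}$ read as $1$ and a maximum over $j$ in place of the sum. Combining the two estimates,
\[
\WCE(\numint[X_N];\sob{p}{s}{d})\ \ge\ \frac{c_d\,n\,\rho_n^{\,d}}{c_{s,d}'\,n^{1/p}\rho_n^{-s+d/p}}\ =\ c_{s,d}''\,n^{1/q}\,\rho_n^{\,s+d/q},
\]
and solving for $\rho_n$ (and recalling $q(s+d/q)=qs+d$) gives exactly $\rho_n\le c_{s,d}\,n^{-1/(qs+d)}\,[\WCE(\numint[X_N];\sob{p}{s}{d})]^{1/(s+d/q)}$ with $c_{s,d}$ depending only on $s$ and $d$.

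The analytic content is entirely that of Theorem~\ref{thm:Covering.bounded.by.Wp}, so I expect no serious obstacle; the only genuinely new points are the geometric bookkeeping in (i)--(ii)—confining each bump strictly inside its cap so that supports are disjoint and every bump and all its derivatives vanish at the points of $X_N$ lying on cap boundaries—and making sure the subadditivity and monotonicity steps ($\rho_j\ge\rho_n$ with $-ps+d<0$) are applied in the correct direction. One should also note that the interpolation constant and all constants above are independent of $p$, $q$, $n$, and the packing, since $p$ enters only through the exponents and $n$ only through the counting, exactly as in the $n=1$ case.
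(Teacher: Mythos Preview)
Your approach is exactly the paper's: the same sum-of-bumps fooling function, the same lower bound on the integral via $\rho_j\ge\rho_n$, the same disjoint-support computation of the Sobolev norm, and the same interpolation to pass to non-even~$s$. One point needs tightening: the displayed equality $\|f\|_{\sob{p}{s}{d}}^p=\sum_j\|f_{\rho_j}\|_{\sob{p}{s}{d}}^p$ is only valid for \emph{even} $s$, since for non-even $s$ the operator $(1-\LBo{d})^{s/2}$ is a nonlocal pseudodifferential operator and does not preserve disjoint supports (your own item~(ii) correctly restricts locality to integer powers $k$). The fix---which is what the paper does---is to first establish $\|F_n\|_{\sob{p}{s}{d}}\le c_{s,d}\,n^{1/p}\rho_n^{-s+d/p}$ for even $s$ via disjoint supports, and then apply Lemma~\ref{lem:Gagliardo-Nirenberg.inequality} to $F_n$ itself (between even $s_0$ and $s_1$) rather than to each $f_{\rho_j}$ before summing; the factor $n^{1/p}$ survives because it appears in both endpoint bounds.
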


\begin{proof} We proceed along the same lines as the proof of Theorem~\ref{thm:Covering.bounded.by.Wp} but use now a fooling function of the form
\begin{equation*}
F_n( \PT{x} ) \DEF \sum_{k = 1}^n f_{\rho_k}( \PT{x} ), \qquad \PT{x} \in \sph{d},
\end{equation*}
that is the sum of all the contributions of functions $f_{\rho_k}( \PT{x} ) \DEF \Phi_{\rho_k}( \PT{y}_k \cdot \PT{x} )$ of type~\eqref{eq:zonal.duping.function} fitted to holes centered at $\PT{y}_k \in \sph{d}$, $1 \leq k \leq n$, in the ordered $X_N$-avoiding packing on $\sph{d}$.

From \eqref{eq:int.f.alpha} we obtain that
\begin{equation} \label{eq:ext.int.duping.function.F.n}
\left| \xctint( F_n ) \right| = \sum_{k=1}^n \left| \xctint( f_{\rho_k} ) \right| \geq c_d \sum_{k=1}^n \rho_k^d \geq n \, c_d \, \rho_n^d.
\end{equation}
Observe that the supports of any two of $f_{\rho_1}, \ldots, f_{\rho_n}$ intersect at most on their boundaries. It follows that for even $s \geq 0$ this property also holds for any two of $\BeOp{-s}[f_{\rho_1}], \ldots, \BeOp{-s}[f_{\rho_n}]$, where $\BeOp{-s}[f_\rho] = \left( 1 - \LBo{d} \right)^{s/2} f_\rho$ (see \eqref{eq:LBo.to.f.rho}). 
The estimate \eqref{eq:Sobolev.norm.lower.bound.even.case} gives for $p = \infty$, 
\begin{equation*}
\left\| F_n \right\|_{\sob{p}{s}{d}} = \max_{1 \leq m \leq n} \left\| f_{\rho_m} \right\|_{\sob{p}{s}{d}} \leq \max_{1 \leq m \leq n} c_{s,d}^{\prime\prime\prime} \, \rho_m^{-s} = c_{s,d}^{\prime\prime\prime} \, \rho_n^{-s},
\end{equation*}
while for $1 \leq p < \infty$,
\begin{equation*}
\left\| F_n \right\|_{\sob{p}{s}{d}}^p = \sum_{m=1}^n \left\| f_{\rho_m} \right\|_{\sob{p}{s}{d}}^p  \leq \sum_{m=1}^n \left( c_{s,d}^{\prime\prime\prime} \, \rho_m^{-s + d/p} \right)^p \leq n \left( c_{s,d}^{\prime\prime\prime} \, \rho_n^{-s + d/p} \right)^p.
\end{equation*}
Hence
\begin{equation} \label{eq:Sobolev.norm.F.n.lower.bound.even.case}
\left\| F_n \right\|_{\sob{p}{s}{d}} \leq n^{1/p} \, c_{s,d}^{\prime\prime\prime} \, \rho^{-s + d/p}, \qquad \text{$1 \leq p \leq \infty$, $s \geq 0$ even.}
\end{equation}
The result for other $s$ is then obtained using the interpolation inequality \eqref{eq:interp.W.s}:
\begin{equation} \label{eq:Sobolev.norm.F.n.lower.bound.other.case}
\left\| F_n \right\|_{\sob{p}{s}{d}} \leq n^{1/p} \, c_{s,d}^{iv} \, \rho^{-s + d/p}, \qquad \text{$1 \leq p \leq \infty$, $s \geq 0$ not even.}
\end{equation}

Substituting the estimates \eqref{eq:ext.int.duping.function.F.n}, \eqref{eq:Sobolev.norm.F.n.lower.bound.even.case} and \eqref{eq:Sobolev.norm.F.n.lower.bound.other.case} into \eqref{eq:WCE.lowerbound}, we get
\begin{equation*}
\WCE( \numint[X_N]; \sob{p}{s}{d} ) \geq n^{1/q} \, c_{s,d}^{v} \, \rho_n^{s + d/q},
\end{equation*}
where $1\leq q \leq \infty$ is such that $1/p + 1/q = 1$. This completes the proof.
\end{proof}

\subsection{Lower bound}

\begin{proof}[Proof of Theorem~\ref{thm:Covering.lower.bound}]
Let $( Z_{N_t} )$ be a sequence of well-separated spherical $t$-designs on $\sph{d}$ with $N_t \asymp t^d$. The existence of such a sequence is established in \cite{BoRaVi2014}. Fix $\varepsilon \in (0,1]$ and $c > 0$. For each $Z_{N_t}$ we select a spherical cap with radius $\alpha_t = c \, N_t^{-(1 - \varepsilon) / d}$ and an arbitrary center, and remove all the points in this cap. This gives a new set $X_{N_t-M_t}$ with $N_t - M_t$ points, where $M_t$ depends on the cap and on $\alpha_t$ and thus on $N_t$. It follows from the well-separation property of $(Z_{N_t})$ that $\SEP( X_{N_t-M_t} ) \geq c^\prime N_t^{-1/d}$ for some $c^\prime > 0$, thus for some $c^{\prime\prime} > 0$, we have
\begin{equation*}
M_t \leq c^{\prime\prime} \, N_t^{\varepsilon} \qquad \text{for all $Z_{N_t}$.}
\end{equation*}
The removal of the $M_t$ points generates a hole of radius $\alpha_t$, so that the covering radius of $X_{N_t-M_t}$ satisfies
\begin{equation} \label{eq:lower.bound.cov}
\COV( X_{N_t-M_t} ) \geq \alpha_t = c \, N_t^{- ( 1 - \varepsilon ) / d}.
\end{equation}
Next, we quantify the quality of $X_{N_t-M_t}$ as a set of cubature points by estimating the worst-case error for the QMC method $\numint[X_{N_t-M_t}]$. Let $f \in \sob{p}{s}{d}$, $s > d/p$, with ${\| f \|_{\sob{p}{s}{d}} = 1}$. The error of  numerical integration, $\mathcal{R}[X_{N_t-M_t}]( f ) \DEF \numint[X_{N_t-M_t}]( f ) - \xctint[X_{N_t-M_t}]( f )$, can be written as
\begin{equation*}
\mathcal{R}[X_{N_t-M_t}]( f ) = \frac{N_t}{N_t-M_t} \, \mathcal{R}[Z_{N_t}]( f ) - \frac{M_t}{N_t-M_t} \, \mathcal{R}[ Z_{N_t} \setminus X_{N_t-M_t}]( f ).
\end{equation*}
Since $(Z_{N_t})$ is a sequence of spherical $t$-designs with $N_t \asymp t^d$ and $N_t / ( N_t - M ) \to 1$ as $N_t \to \infty$, it follows from Proposition~\ref{prop:brandetal} that for some $C > 0$ we have
\begin{equation*}
\frac{N_t}{N_t-M_t} \left| \mathcal{R}[Z_{N_t}]( f ) \right| \leq C \, N_t^{-s/d}.
\end{equation*}
Furthermore, the fact that $\sob{p}{s}{d}$ can be continuously embedded into $C( \sph{d} )$ for $s > d/p$ (Proposition~\ref{prop:embed.Wp}) gives that for some $c_{p,s,d} > 0$ (embedding constant)
\begin{equation*}
\left| \mathcal{R}[ Z_{N_t} \setminus X_{N_t-M_t}]( f ) \right| \leq 2 \, \sup_{\PT{x} \in \sph{d}} | f( \PT{x} ) | \leq 2 \, c_{p,s,d} \left\| f \right\|_{\sob{p}{s}{d}} = 2 \, c_{p,s,d}.
\end{equation*}
Since $M_t / ( N_t - M_t ) = \mathcal{O}( N_t^{-( 1 - \varepsilon )} )$, we get
\begin{equation*}
\frac{M_t}{N_t-M_t} \left| \mathcal{R}[ Z_{N_t} \setminus X_{N_t-M_t}]( f ) \right| \leq 2 \, c_{p,s,d} \, \frac{M_t}{N_t-M_t} \leq C^\prime \, N_t^{-( 1 - \varepsilon )}.
\end{equation*}
We conclude that
\begin{equation}
\WCE(\numint[X_{N_t-M_t}]; \sob{p}{s}{d} ) \leq C \, N_t^{-s/d} + C^\prime \, N_t^{-( 1 - \varepsilon )}.
\end{equation}
Until now we have allowed $\varepsilon \in (0,1]$ to be arbitrary. If we now force $\varepsilon \DEF 1 - s/d$, then $( X_{N_t-M_t} )$ is a well-separated QMC-design sequence for $\sob{p}{s}{d}$. From \eqref{eq:lower.bound.cov} we have $\COV( X_{N_t-M_t} ) \geq c \, N_t^{-s/d^2}$, completing the proof.
\end{proof}

A more precise analysis of the effects on the worst-case error when one or more points are removed from a circular design (i.e., a spherical design on $\sph{1}$) is given in Section~\ref{sec:unit.circle}.

\section{The function space setting and embedding theorems}
\label{sec:function.space.setting}

In this section we set up the machinery needed to prove the worst-case error results.
Let~$d$ be a positive integer. Our manifold is the unit sphere $\sph{d}$ in the Euclidean space $\R^{d+1}$ provided with the normalized surface area measure $\sigma_d$. For future reference we record that
\begin{equation} \label{eq:omega.d.ratio}
\frac{\omega_{d-1}}{\omega_d} = \frac{\gammafcn( (d + 1)/2 )}{\sqrt{\pi} \, \gammafcn( d/2 )}, \qquad 2^{d-1} \frac{\omega_{d-1}}{\omega_d} \int_{-1}^1 \left( 1 - t^2 \right)^{d/2-1} \dd t = 1,
\end{equation}
where $\gammafcn( z )$ denotes the gamma function, and $\omega_d$ is the surface area of $\sph{d}$.

\subsection{Spherical harmonics}
The restriction to $\sph{d}$ of a homogeneous and harmonic polynomial of total degree $\ell$ defined on $\R^{d+1}$ is called a \emph{spherical harmonic} of degree $\ell$ on~$\sph{d}$. The family $\mathcal{H}_{\ell}^{d} = \mathcal{H}_{\ell}^{d}( \sph{d} )$ of all spherical harmonics of exact degree $\ell$ on $\sph{d}$ has dimension
\begin{equation*}
Z(d,\ell) \DEF \left( 2\ell + d - 1 \right) \frac{\gammafcn( \ell + d - 1 )}{\gammafcn( d ) \gammafcn( \ell + 1 )}.
\end{equation*}
Each spherical harmonic $Y_{\ell}$ of exact degree $\ell$ is an eigenfunction of the negative \emph{Laplace-Beltrami operator} $-\LBo{d}$ for $\sph{d}$, with eigenvalue
\begin{equation} \label{eq:eigenvalue}
\lambda_\ell \DEF \ell \left( \ell + d - 1 \right), \qquad \ell = 0, 1, 2, \ldots.
\end{equation}
As usual, let $\{ Y_{\ell, k} : k = 1, \ldots, Z(d, \ell) \}$ denote an $\mathbb{L}_2$-orthonormal basis of $\mathcal{H}_{\ell}^{d}$. Then the basis functions $Y_{\ell, k}$ satisfy the following identity known as the \emph{addition theorem}:
\begin{equation} \label{eq:addition.theorem}
\sum_{k=1}^{Z(d,\ell)} Y_{\ell,k}( \PT{x} ) Y_{\ell,k}( \PT{y} ) = Z(d,\ell) \, P_\ell^{(d)}(\PT{x} \cdot \PT{y}), \qquad \PT{x}, \PT{y} \in \sph{d},
\end{equation}
where $P_\ell^{(d)}$ is the normalized Gegenbauer (or Legendre) polynomial, orthogonal on the interval $[-1,1]$ with respect to the weight function $(1-t^2)^{d/2-1}$, and normalized by $P_\ell^{(d)}(1) = 1$.

The collection $\{ Y_{\ell, k} : k = 1, \ldots, Z(d, \ell); \ell = 0, 1, \ldots \}$ forms a complete orthonormal (with respect to $\sigma_d$) system for the Hilbert space $\mathbb{L}_2(\sph{d})$ of square-integrable functions on~$\sph{d}$ endowed with the usual inner product 
\begin{equation*}
( f, g )_{\Lp{2}{d}} \DEF \int_{\sph{d}} f( \PT{x} ) g( \PT{x} ) \dd \sigma_d( \PT{x} ), 
\end{equation*}
as well as a complete system for all the Banach spaces $\Lp{p}{d}$ of $p$th power integrable functions on $\sph{d}$ with $1 \leq p < \infty$ provided with the usual $p$-norm
\begin{equation*}
\left\| f \right\|_p \DEF \left\| f \right\|_{\Lp{p}{d}} \DEF \left( \int_{\sph{d}} \left| f( \PT{x} ) \right|^p \dd \sigma_d( \PT{x} ) \right)^{1/p},
\end{equation*}
and for the Banach space $C( \sph{d} )$ of continuous functions on $\sph{d}$ endowed with the maximum norm
\begin{equation*}
\left\| f \right\|_{C} \DEF \max_{\PT{x} \in \sph{d}} \left| f( \PT{x} ) \right|.
\end{equation*}
(For more details, we refer the reader to \cite{BeBuPa1968,Mu1966}.)

The Funk-Hecke formula states that for every spherical harmonic $Y_\ell$ of degree $\ell$ (see~\cite{Mu1966}),
\begin{equation}
\label{eq:Funk-Hecke.formula}
\int_{\sph{d}} g( \PT{y} \cdot \PT{z} ) \, Y_\ell( \PT{y} ) \, \dd \sigma_d( \PT{y} ) = \widehat{g}(\ell) \, Y_\ell( \PT{z} ), \qquad \PT{z} \in \sph{d},
\end{equation}
where
\begin{equation} \label{eq:Funk-Hecke.formula.B}
\widehat{g}( \ell ) = \frac{\omega_{d-1}}{\omega_d} \, \int_{-1}^1 g( t ) \, P_\ell^{(d)}( t ) \left( 1 - t^2 \right)^{d/2-1} \dd t.
\end{equation}
(This formula holds, in particular, for the spherical harmonic ${Y_\ell( \PT{y} ) = P_\ell^{(d)}( \PT{a} \cdot \PT{y} )}$, $\PT{a} \in \sph{d}$.)

\subsection{Convolution}
We shall frequently use the convolution of a zonal kernel, i.e. one that depends only on the inner product of the arguments, ``against'' a function $f$ on $\sph{d}$.
With abuse of notation we write $G( \PT{x}, \PT{y} ) = G( \PT{x} \cdot \PT{y} )$ for $\PT{x}, \PT{y} \in \sph{d}$.
For $1\leq p<\infty$, let $\mathbb{L}_{p,d}([-1,1])$ consists of all functions of the form $g_{\PT{z}}( \PT{x} ) \DEF G( \PT{z} \cdot \PT{x} )$, $\PT{x}, \PT{z} \in \sph{d}$, with finite norm $\| G \|_{p,d} \DEF \| g_{\PT{z}} \|_{p}$. Of course, this norm does not depend on the choice of $\PT{z} \in \sph{d}$ since, by the Funk-Hecke formula with $\ell = 0$ (see~\eqref{eq:Funk-Hecke.formula} and \eqref{eq:Funk-Hecke.formula.B}),
\begin{equation}
\left\| G \right\|_{p,d} = \| g_{\PT{z}} \|_p = \left( \frac{\omega_{d-1}}{\omega_d} \int_{-1}^1 \left| G( t ) \right|^p \left( 1 - t^2 \right)^{d/2-1} \dd t \right)^{1/p}.
\end{equation}

\begin{definition}
The convolution of the zonal kernel $G \in \mathbb{L}_{1,d}([-1,1])$ against $f \in \Lp{p}{d}$ is the function $G \ast f$ given by
\begin{equation*}
( G \ast f )( \PT{x} ) \DEF \int_{\sph{d}} G( \PT{z} \cdot \PT{x} ) \, f( \PT{z} ) \, \dd \sigma_d( \PT{z} ), \qquad \PT{x} \in \sph{d}.
\end{equation*}
The convolution of the zonal kernel $K \in \mathbb{L}_{1,d}([-1,1])$ against $G \in \mathbb{L}_{1,d}([-1,1])$ is the kernel $K \ast G$ given by
\begin{equation*}
( K \ast G )( \PT{x} \cdot \PT{y} ) \DEF \int_{\sph{d}} K( \PT{z} \cdot \PT{x} ) \, G( \PT{z} \cdot \PT{y} ) \, \dd \sigma_d( \PT{z} ), \qquad \PT{x}, \PT{y} \in \sph{d}.
\end{equation*}
\end{definition}

If $g \in \mathbb{L}_{q,d}([-1,1])$, $1 \leq p, q \leq \infty$ and $f \in \Lp{p}{d}$, then the convolution $g \ast f$ exists $\sigma_d$-almost everywhere on $\sph{d}$ and Young's inequality holds; i.e.,
\begin{equation} \label{eq:Young.s.ineq}
\left\| g \ast f \right\|_{r} \leq \left\| g \right\|_{q,d} \, \left\| f \right\|_{p}  \qquad \text{for all $r$ with $\frac{1}{r} = \frac{1}{p} + \frac{1}{q} - 1 \geq 0$.}
\end{equation}
In particular, one has
\begin{equation} \label{eq:Young.s.ineq.A}
\left\| g \ast f \right\|_{p} \leq \left\| g \right\|_{1,d} \, \left\| f \right\|_{p} \qquad \text{and} \qquad \left\| g \ast f \right\|_{q} \leq \left\| g \right\|_{q,d} \, \left\| f \right\|_{1}.
\end{equation}

\subsection{Sobolev function space classes}
\label{subsec:generalized.Sobolev.space}

The \emph{Laplace-Fourier series} (in terms of spherical harmonics) of a function $f \in \Lp{1}{d}$ is given by the formal expansion
\begin{equation} \label{eq:spherical.harmonics.expansion}
S[f]( \PT{x} ) \sim \sum_{\ell = 0}^\infty Y_\ell[f]( \PT{x} ), \qquad \PT{x} \in \sph{d},
\end{equation}
where $Y_\ell[f]$ is the \emph{projection of $f$ onto $\mathcal{H}_\ell^d$}. It can be obtained by the convolution
\begin{equation} \label{eq:projection.onto.calligr.H.A}
Y_\ell[f]( \PT{x} ) \DEF \int_{\sph{d}} Z(d, \ell) \, P_\ell^{(d)}(\PT{x} \cdot \PT{y}) \, f( \PT{y} ) \, \dd \sigma_d( \PT{y} ), \qquad \PT{x} \in \sph{d}.
\end{equation}
Application of the addition theorem yields
\begin{equation} \label{eq:projection.onto.calligr.H.B}
Y_\ell[f]( \PT{x} ) = \sum_{k = 1}^{Z(d, \ell)} \widehat{f}_{\ell, k} \, Y_{\ell, k}( \PT{x} ), \qquad \PT{x} \in \sph{d},
\end{equation}
and $\widehat{f}_{\ell, k}$ are the \emph{Laplace-Fourier coefficients of $f$} defined by
\begin{equation} \label{eq:L-F.coeff}
\widehat{f}_{\ell, k} \DEF \int_{\sph{d}} f( \PT{x} ) \, Y_{\ell,k}( \PT{x} ) \dd \sigma_{d}(\PT{x}), \qquad k = 1, \ldots, Z(d, \ell), \, \ell = 0, 1, 2, \ldots.
\end{equation}

\begin{definition} \label{def:generalized.Sobolev.space}
The generalized Sobolev space $\sob{p}{s}{d}$ may be defined for $s \geq 0$ and $1 \leq p \leq \infty$ as the set of all functions $f\in \Lp{p}{d}$ with
\begin{equation} \label{eq:primary.Sobolev.space.characterization}
\left\| f \right\|_{\sob{p}{s}{d}} \DEF \left\| \sum_{\ell=0}^\infty \left( 1 + \lambda_\ell \right)^{s/2} Y_\ell[f] \right\|_p < \infty,
\end{equation}
where the $\lambda_\ell$ are given in \eqref{eq:eigenvalue} and formulas for $Y_\ell[f]$ are provided in \eqref{eq:projection.onto.calligr.H.A} and \eqref{eq:projection.onto.calligr.H.B}.
\end{definition}

\begin{remark*}
The definition implies that $\sum_{\ell=0}^L \left( 1 + \lambda_\ell \right)^{s/2} Y_\ell[f]( \PT{x} )$ converges pointwise as ${L \to \infty}$ for almost all (in the sense of Lebesgue measure) points on $\sph{d}$, since otherwise the sum is not in $\Lp{p}{d}$.
\end{remark*}

We note that $\sob{p}{0}{d} = \Lp{p}{d}$. In the case of $p = 2$, Parseval's identity yields the following equivalent characterization: a function $f \in \Lp{2}{d}$ is in $\sob{2}{s}{d}$ if and only if the Laplace-Fourier coefficients $\widehat{f}_{\ell, k}$ of $f$ given in \eqref{eq:L-F.coeff} satisfy the condition
\begin{equation} \label{eq:sobcond}
\sum_{\ell=0}^\infty \left( 1 + \lambda_\ell \right)^{s} \sum_{k=1}^{Z(d,\ell)} \left| \widehat{f}_{\ell,k} \right|^2 < \infty,
\end{equation}
but a characterization of this kind in terms of the Laplace-Fourier coefficients does not hold for general $p$.

\subsection{The space $\sob{p}{s}{d}$ as a Bessel potential space}

The \emph{Bessel operator of order $s$},
\begin{equation} \label{eq:Bessel operator}
\BeOp{-s} \DEF \left( 1 - \LBo{d} \right)^{s/2}, \qquad s \in \R,
\end{equation}
is a pseudodifferential operator of order $s$ with symbol $(b_\ell^{(s)})_{\ell\geq0}$ given by
\begin{equation} \label{eq:coeff.Bessel}
b_{\ell}^{(s)} \DEF \left( 1 + \lambda_\ell \right)^{s/2} \asymp \left( 1 + \ell \right)^{s}, \qquad \ell = 0, 1, 2, \ldots.
\end{equation}
For $s \geq 0$ it is an operator from $\sob{p}{s}{d}$ to $\Lp{p}{d}$ defined by
\begin{equation} \label{eq:Bessel.Operator}
\BeOp{-s}[f] \DEF \sum_{\ell = 0}^\infty b_\ell^{(s)} \, Y_\ell[f].
\end{equation}
We shall also need the inverse operator $\BeOp{s}: \Lp{p}{d} \to \sob{p}{s}{d}$, which, in contrast to $\BeOp{-s}$ for $s \geq 0$, is a smoothing operator.
The Bessel operator satisfies the following identities:
\begin{equation} \label{eq:Bessel.op.properties}
\BeOp{-\alpha} \BeOp{-\beta} = \BeOp{-(\alpha+\beta)}, \qquad \left( \BeOp{-\alpha} \right)^{-1} = \BeOp{\alpha}, \qquad \BeOp{0} = \mathrm{Id}, \qquad \alpha, \beta \in \R.
\end{equation}
The generalized Sobolev space $\sob{p}{s}{d}$ of Definition~\ref{def:generalized.Sobolev.space} can be interpreted as a \emph{Bessel potential space} and we can use the following equivalent characterization.
\begin{proposition} \label{prop:W.p.s.characterization}
Let $s \geq 0$ and $1 \leq p \leq \infty$. Then $\sob{p}{s}{d}$ is the set of all functions $f \in \Lp{p}{d}$ for which $\BeOp{-s}[f] \in \Lp{p}{d}$, and  $\| f \|_{\sob{p}{s}{d}} = \| \BeOp{-s}[f] \|_{p}$.
\end{proposition}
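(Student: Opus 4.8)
The proposition is essentially a reconciliation of two descriptions of one object: the series $\sum_{\ell\geq 0}(1+\lambda_\ell)^{s/2}Y_\ell[f]$ that enters Definition~\ref{def:generalized.Sobolev.space}, and the operator $\BeOp{-s}$ of \eqref{eq:Bessel.Operator}, whose symbol $b_\ell^{(s)}=(1+\lambda_\ell)^{s/2}$ is exactly the coefficient appearing in that series. So the plan is to prove that an $\Lp{p}{d}$ function $g$ ``equals'' this series (in whatever sense is used to define the Sobolev norm) if and only if its Laplace--Fourier projections satisfy $Y_\ell[g]=b_\ell^{(s)}Y_\ell[f]$ for every $\ell$, i.e.\ if and only if $g=\BeOp{-s}[f]$ in the sense of \eqref{eq:Bessel.Operator}; once this is established, the norm identity $\|f\|_{\sob{p}{s}{d}}=\|\BeOp{-s}[f]\|_p$ and the equivalence $f\in\sob{p}{s}{d}\iff\BeOp{-s}[f]\in\Lp{p}{d}$ are immediate.

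The two technical inputs are the following. First, by \eqref{eq:projection.onto.calligr.H.A} each projection $Y_\ell[\cdot]$ is convolution against the bounded zonal kernel $Z(d,\ell)P_\ell^{(d)}$, so by \eqref{eq:Young.s.ineq.A} it extends to a bounded operator on every $\Lp{p}{d}$, $1\leq p\leq\infty$, and on $\Lp{1}{d}$; moreover, by orthogonality of spherical harmonics of distinct degrees, $Y_m[Y_\ell[h]]=\delta_{m\ell}Y_\ell[h]$. In particular, applying $Y_m$ to the partial sum $T_L:=\sum_{\ell=0}^{L}b_\ell^{(s)}Y_\ell[f]$ yields $Y_m[T_L]=b_m^{(s)}Y_m[f]$ for all $L\geq m$. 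Second, since the spherical harmonics form a complete system in $\Lp{p}{d}$ for $1\leq p<\infty$ and in $C(\sph{d})$, an $\Lp{1}{d}$ function is determined a.e.\ by the family $\{Y_\ell[\cdot]\}_{\ell\geq0}$; hence there is at most one $g\in\Lp{p}{d}$ with $Y_\ell[g]=b_\ell^{(s)}Y_\ell[f]$ for all $\ell$, and we identify it with $\BeOp{-s}[f]$.

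Putting these together: if $f\in\sob{p}{s}{d}$ as in Definition~\ref{def:generalized.Sobolev.space}, the series represents some $g\in\Lp{p}{d}$ with $\|g\|_p=\|f\|_{\sob{p}{s}{d}}$; passing to the limit in $Y_m[T_L]=b_m^{(s)}Y_m[f]$ and using continuity of $Y_m$ gives $Y_m[g]=b_m^{(s)}Y_m[f]$ for every $m$, so $g=\BeOp{-s}[f]\in\Lp{p}{d}$ with the claimed norm. Conversely, if $g:=\BeOp{-s}[f]\in\Lp{p}{d}$, then $Y_\ell[g]=b_\ell^{(s)}Y_\ell[f]$, so $T_L$ is the $L$-th partial sum of the Laplace--Fourier series of $g$; a summability result for such series in $\Lp{p}{d}$ (Abel or Ces\`aro means, or the filtered/de la Vall\'ee Poussin means built from the kernels of Section~\ref{sec:auxiliary.results}) shows this series represents $g$, whence $f\in\sob{p}{s}{d}$ and the norms agree.

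I expect the only real obstacle to be making the converse direction fully rigorous, namely pinning down the summation convention implicit in Definition~\ref{def:generalized.Sobolev.space} (the Remark flags pointwise a.e.\ convergence of partial sums, but partial sums of a Laplace--Fourier series need not return the $\Lp{p}{d}$ function when $p\in\{1,\infty\}$) and then arguing that the series nevertheless ``represents'' $g$ in that sense; this is handled by an approximate-identity/filtered-kernel argument rather than naive partial sums. All remaining steps are bookkeeping with the definitions together with the boundedness and diagonality of the projections $Y_\ell[\cdot]$.
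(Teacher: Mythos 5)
Your opening paragraph already contains the whole argument: $\BeOp{-s}[f]$ is \emph{defined} in \eqref{eq:Bessel.Operator} as the series $\sum_{\ell\geq0}b_\ell^{(s)}Y_\ell[f]$, and since $b_\ell^{(s)}=(1+\lambda_\ell)^{s/2}$ by \eqref{eq:coeff.Bessel}, this is verbatim the series appearing inside the norm of Definition~\ref{def:generalized.Sobolev.space}. So the identity $\|f\|_{\sob{p}{s}{d}}=\|\BeOp{-s}[f]\|_p$ holds by comparing the two displayed formulas symbol for symbol, and $f\in\sob{p}{s}{d}$ iff that quantity is finite iff $\BeOp{-s}[f]\in\Lp{p}{d}$. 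This is exactly the paper's position: the proposition is offered as an ``equivalent characterization'' without proof, because there is nothing to reconcile --- the two expressions are not two a priori different objects, they are the \emph{same} series.

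The rest of your proposal goes a different route: you interpret $\BeOp{-s}[f]$ as ``the $\Lp{p}{d}$ function $g$ whose Laplace--Fourier projections are $b_\ell^{(s)}Y_\ell[f]$'' and then reconcile this with the series via diagonality of the projections, completeness, and a summability/filtered-kernel argument to recover $g$ from its partial sums. That reading would indeed require the machinery you describe, and your worry about the converse (partial sums of a Laplace--Fourier series not converging in $\Lp{p}{d}$ for $p\in\{1,\infty\}$) would be a real obstacle. But the paper forecloses this by its Remark after Definition~\ref{def:generalized.Sobolev.space}, which fixes pointwise a.e.\ convergence of the partial sums as the implicit summation convention: ``is in $\Lp{p}{d}$'' already presupposes that the series converges pointwise a.e.\ to an $\Lp{p}{d}$ function, and \eqref{eq:Bessel.Operator} then simply names that limit $\BeOp{-s}[f]$. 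Under that convention, no Abel/Ces\`aro or de la Vall\'ee Poussin argument is needed. Your proof is not wrong, but it solves a harder problem than the one the paper sets up; what it buys is a characterization of $\BeOp{-s}$ by its Fourier coefficients rather than by a specific mode of convergence, which would be the right level of generality in a setting where the summation convention is not prescribed.
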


For $s \geq 0$ we define the zonal \emph{Bessel kernel}
\begin{equation} \label{eq:Bessel.kernel}
\BeKe{s}( \PT{x} \cdot \PT{y} ) \DEF \sum_{\ell = 0}^\infty b_\ell^{(-s)} \, Z(d, \ell) \, P_\ell^{(d)}( \PT{x} \cdot \PT{y} ), \qquad \PT{x}, \PT{y} \in \sph{d}.
\end{equation}
Then we can use the following characterization of $\sob{p}{s}{d}$.

\begin{proposition} \label{prop:Bessel.potential}
Let $s \geq 0$ and $1 \leq p \leq \infty$. Then $f \in \sob{p}{s}{d}$ if and only if $f$ is a \emph{Bessel potential} of a function $g \in \Lp{p}{d}$; that is,
\begin{equation} \label{eq:Bessel.potential}
f( \PT{x} ) = \int_{\sph{d}} \BeKe{s}( \PT{y} \cdot \PT{x} ) \, g( \PT{y} ) \, \dd \sigma_d( \PT{y} ) = \big( \BeKe{s} \ast g \big)( \PT{x} ), \qquad \PT{x} \in \sph{d}.
\end{equation}
Moreover, we have $\BeOp{-s}[f] = g$ and $\| f \|_{\sob{p}{s}{d}} = \| g \|_{p}$.
\end{proposition}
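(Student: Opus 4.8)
The plan is to funnel both implications through the single identity $\BeKe{s} \ast g = \BeOp{s}[g]$ for $g \in \Lp{p}{d}$ and then read off the conclusion from Proposition~\ref{prop:W.p.s.characterization} together with the composition rules \eqref{eq:Bessel.op.properties}. The case $s = 0$ is degenerate: there $\sob{p}{0}{d} = \Lp{p}{d}$, $\BeOp{0} = \mathrm{Id}$, and with the convention $\BeKe{0} \ast g \DEF g$ there is nothing to prove, so assume $s > 0$ throughout.

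\emph{Step 1: the Bessel kernel is integrable.} I would first show $\BeKe{s} \in \mathbb{L}_{1,d}([-1,1])$ for every $s > 0$, so that $\BeKe{s} \ast g$ is defined for all $g \in \Lp{p}{d}$ and, by Young's inequality \eqref{eq:Young.s.ineq.A}, lies in $\Lp{p}{d}$. The slickest route is Bochner subordination: from $b_\ell^{(-s)} = (1 + \lambda_\ell)^{-s/2} = \frac{1}{\gammafcn(s/2)} \int_0^\infty u^{s/2 - 1} \mathrm{e}^{-u(1 + \lambda_\ell)} \, \dd u$ one sees that $\BeKe{s}$ is a nonnegative superposition of the heat kernels $H_u( \PT{x} \cdot \PT{y} ) = \mathrm{e}^{-u} \sum_{\ell \geq 0} \mathrm{e}^{-u \lambda_\ell} Z(d,\ell) P_\ell^{(d)}( \PT{x} \cdot \PT{y} )$, each of which is a nonnegative zonal function; hence $\BeKe{s} \geq 0$, and its Funk--Hecke coefficient at $\ell = 0$ gives $\| \BeKe{s} \|_{1,d} = \widehat{\BeKe{s}}(0) = b_0^{(-s)} = 1$. (Equivalently one may invoke the near-diagonal bound $\BeKe{s}( \cos\theta ) = \mathcal{O}_{s,d}( \theta^{s - d} )$ as $\theta \to 0^+$, which also follows from the filtered-kernel estimates developed in Section~\ref{sec:auxiliary.results}.)

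\emph{Step 2: projections of the convolution.} For $g \in \Lp{p}{d} \subseteq \Lp{1}{d}$ I would compute, using the projection formula \eqref{eq:projection.onto.calligr.H.A}, Fubini's theorem (legitimate since $\BeKe{s} \in \mathbb{L}_{1,d}$ and $g \in \Lp{1}{d}$), and the Funk--Hecke formula \eqref{eq:Funk-Hecke.formula}--\eqref{eq:Funk-Hecke.formula.B} applied to the spherical harmonic $\PT{z} \mapsto P_\ell^{(d)}( \PT{x} \cdot \PT{z} )$ against the zonal kernel $\BeKe{s}$, that $Y_\ell[ \BeKe{s} \ast g ] = \widehat{\BeKe{s}}(\ell) \, Y_\ell[g] = b_\ell^{(-s)} \, Y_\ell[g]$ for every $\ell \geq 0$, where $\widehat{\BeKe{s}}(\ell) = b_\ell^{(-s)}$ because the Funk--Hecke transform of $Z(d,m) P_m^{(d)}$ is $\delta_{\ell,m}$. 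Hence, in the notation of Definition~\ref{def:generalized.Sobolev.space}, $\BeOp{-s}[ \BeKe{s} \ast g ] = \sum_{\ell \geq 0} b_\ell^{(s)} b_\ell^{(-s)} Y_\ell[g] = \sum_{\ell \geq 0} Y_\ell[g] = g$; since $g \in \Lp{p}{d}$, Proposition~\ref{prop:W.p.s.characterization} gives $\BeKe{s} \ast g \in \sob{p}{s}{d}$ with $\| \BeKe{s} \ast g \|_{\sob{p}{s}{d}} = \| g \|_p$ and $\BeOp{-s}[ \BeKe{s} \ast g ] = g$. This settles the ``if'' direction.

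\emph{Step 3: the converse.} If $f \in \sob{p}{s}{d}$, set $g \DEF \BeOp{-s}[f]$; Proposition~\ref{prop:W.p.s.characterization} gives $g \in \Lp{p}{d}$ and $\| g \|_p = \| f \|_{\sob{p}{s}{d}}$, and from $g = \sum_{\ell \geq 0} b_\ell^{(s)} Y_\ell[f]$ we get $Y_\ell[f] = b_\ell^{(-s)} Y_\ell[g] = Y_\ell[ \BeKe{s} \ast g ]$ for every $\ell$. Since $f$ and $\BeKe{s} \ast g$ both belong to $\Lp{p}{d}$ and have identical projections onto every $\mathcal{H}_\ell^d$, completeness of the spherical harmonics in $\Lp{p}{d}$ forces $f = \BeKe{s} \ast g$; that is, $f$ is the Bessel potential of $g = \BeOp{-s}[f]$. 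Combining the two directions yields the stated equivalence together with $\BeOp{-s}[f] = g$ and $\| f \|_{\sob{p}{s}{d}} = \| g \|_p$. The one substantive ingredient is Step~1 — the integrability $\BeKe{s} \in \mathbb{L}_{1,d}([-1,1])$, equivalently the fact that the diagonal singularity of $\BeKe{s}$ is no worse than $\theta^{s-d}$ — whereas Steps~2 and~3 amount to bookkeeping with the Funk--Hecke formula and the semigroup identities \eqref{eq:Bessel.op.properties}.
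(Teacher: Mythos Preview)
Your proposal is correct and, for Steps~2 and~3, spells out bookkeeping that the paper leaves implicit: the paper's own justification of Proposition~\ref{prop:Bessel.potential} is a single sentence (``Indeed, any convolution~\eqref{eq:Bessel.potential} is in $\sob{p}{s}{d}$ by Young's inequality together with the following boundedness result''), addressing only the ``if'' direction and citing Lemma~\ref{lem:q.norm.boundedness.Bessel.kernel} for the integrability of $\BeKe{s}$.

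The genuine difference is in your Step~1. The paper obtains $\BeKe{s}\in\mathbb{L}_{1,d}([-1,1])$ as the special case $q=1$ (equivalently $p=\infty$, so the hypothesis becomes $s>0$) of Lemma~\ref{lem:q.norm.boundedness.Bessel.kernel}, whose proof in Section~\ref{sec:auxiliary.results} rests on the filtered-kernel localization estimate of Proposition~\ref{lm:localised.UB} together with a partition-of-unity decomposition. Your primary route instead uses Bochner subordination: writing $(1+\lambda_\ell)^{-s/2}$ as a Gamma integral exhibits $\BeKe{s}$ as a nonnegative superposition of heat kernels, whence $\BeKe{s}\geq 0$ and $\|\BeKe{s}\|_{1,d}=\widehat{\BeKe{s}}(0)=1$ exactly. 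This is more self-contained for the present proposition and gives the sharp constant, but it imports the (standard, nontrivial) positivity of the spherical heat kernel. The paper's filtered-kernel route, by contrast, delivers the full range of $\Lp{q}{d}$-bounds on $\BeKe{s}$ for $s>d/p$, which are needed elsewhere (Theorem~\ref{thm:wce.for.Wp}, Theorem~\ref{thm:embed.W.s}), so within the paper that machinery is not avoidable anyway. Your parenthetical alternative in Step~1 (the near-diagonal bound via Section~\ref{sec:auxiliary.results}) is precisely the paper's own argument.
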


Indeed, any convolution \eqref{eq:Bessel.potential} is in $\sob{p}{s}{d}$ by Young's inequality together with the following boundedness result. (The proof will be postponed until the end of Section~\ref{sec:auxiliary.results}.)

\begin{lemma}[Boundedness of the $\Lp{q}{d}$-norm of the Bessel kernel] \label{lem:q.norm.boundedness.Bessel.kernel}
Let $d \geq 1$, ${1 \leq p, q \leq \infty}$ such that $1/p + 1/q = 1$ and $s > d/p$. Then there exists a constant $c > 0$ such that
\begin{equation}
\big\| \BeKe{s} \big\|_{q,d} \leq 1 + \frac{c}{1 - 2^{d/p-s}}.
\end{equation}
\end{lemma}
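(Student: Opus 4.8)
The plan is to estimate the $\Lp{q}{d}$-norm of the Bessel kernel $\BeKe{s}$ by splitting it dyadically in the degree $\ell$ and using a filtered (Littlewood--Paley type) decomposition, then summing a geometric series whose ratio is $2^{d/p-s}<1$ (this is exactly where the hypothesis $s>d/p$ enters). Concretely, I would first isolate the $\ell=0$ term, which contributes $b_0^{(-s)}Z(d,0)P_0^{(d)}=1$ to the kernel and hence exactly $1$ to the $\Lp{q}{d}$-norm after applying the triangle inequality; this accounts for the leading ``$1+$'' in the claimed bound. For the remaining tail $\sum_{\ell\geq1} b_\ell^{(-s)}Z(d,\ell)P_\ell^{(d)}(\PT x\cdot\PT y)$, I would introduce a smooth partition of unity $\{\eta_j\}_{j\geq0}$ on $[0,\infty)$ with $\eta_j$ supported on $[2^{j-1},2^{j+1}]$ (a standard dyadic filter), and write $\BeKe{s}-1=\sum_{j\geq0} \BeKe{s}_j$ where $\BeKe{s}_j(\PT x\cdot\PT y)\DEF\sum_{\ell} \eta_j(\ell)\, b_\ell^{(-s)}\, Z(d,\ell)\, P_\ell^{(d)}(\PT x\cdot\PT y)$ is a filtered zonal polynomial of degree $\asymp 2^j$. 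The triangle inequality then gives $\|\BeKe{s}\|_{q,d}\leq 1+\sum_{j\geq0}\|\BeKe{s}_j\|_{q,d}$.

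The heart of the argument is a uniform bound on each filtered piece: there is a constant $c=c(d,q,\eta)$ with
\begin{equation*}
\big\|\BeKe{s}_j\big\|_{q,d}\leq c\; 2^{j(d/p-s)},\qquad j\geq0.
\end{equation*}
To see this, note that on the frequency band $\ell\asymp 2^j$ the symbol satisfies $b_\ell^{(-s)}\asymp 2^{-js}$ by~\eqref{eq:coeff.Bessel}, so $\BeKe{s}_j$ is $2^{-js}$ times a filtered de la Vallée-Poussin-type kernel $V_j(\PT x\cdot\PT y)=\sum_\ell \eta_j(\ell)\,Z(d,\ell)\,P_\ell^{(d)}(\PT x\cdot\PT y)$ (up to harmless factors absorbed into $\eta$). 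The $\Lp{q}{d}$-norm of such filtered kernels is controlled by the standard localization estimate for smooth filters on the sphere: a filtered kernel of degree $\asymp 2^j$ obeys pointwise decay $|V_j(\cos\theta)|\leq c_M\, 2^{jd}\,(1+2^j\theta)^{-M}$ for every $M$, from which a direct computation (integrating $|V_j(t)|^q(1-t^2)^{d/2-1}$ over $[-1,1]$ in the variable $\theta=\arccos t$) yields $\|V_j\|_{q,d}\leq c\, 2^{jd}\cdot 2^{-jd/q}=c\,2^{jd/p}$. Multiplying by the $2^{-js}$ from the symbol gives the displayed bound. This localization/kernel-estimate step is the main obstacle: it requires either citing the appropriate filtered-kernel machinery (the special filtered kernel is, in fact, exactly what Section~\ref{sec:auxiliary.results} is set up to provide, which is why the authors postpone the proof to the end of that section) or reproving the pointwise decay and the ensuing integral estimate with the $q$-dependence made explicit.

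Granting the per-band estimate, the proof concludes by summing the geometric series:
\begin{equation*}
\big\|\BeKe{s}\big\|_{q,d}\leq 1+c\sum_{j\geq0}2^{j(d/p-s)}=1+\frac{c}{1-2^{d/p-s}},
\end{equation*}
which is finite precisely because $s>d/p$ makes the ratio $2^{d/p-s}<1$, and this is the claimed inequality. I would take care that the constant $c$ from the filtered-kernel estimate depends only on $d$ and on the fixed filter $\eta$ (hence, after using $1/q=1-1/p$, only on $d$ and $p$) and not on $s$ or on the particular point configuration, which is what the statement asserts. One minor point to verify along the way is that for $p=\infty$ (so $q=1$) and $p=1$ (so $q=\infty$) the same dyadic argument goes through: for $q=1$ one uses $\|V_j\|_{1,d}\leq c$ (bounded independently of $j$, consistent with $2^{jd/p}=2^{j\cdot 0}$... wait, $p=\infty$ gives $d/p=0$), and for $q=\infty$ one uses the crude bound $\|V_j\|_{\infty,d}\leq V_j(1)\asymp 2^{jd}=2^{jd/p}$ with $p=1$; in both extreme cases the exponent $d/p-s<0$ still controls the sum.
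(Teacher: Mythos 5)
Your proposal follows essentially the same route as the paper: isolate the $\ell=0$ term, decompose $\BeKe{s}-1$ via a dyadic partition-of-unity filter into filtered Bessel kernels of degree $\asymp 2^j$, bound each band's $\Lp{q}{d}$-norm by $c\,2^{j(d/p-s)}$ using the pointwise localization estimate for filtered kernels (the paper's Proposition~\ref{lm:localised.UB} feeding into Lemma~\ref{lem:q.norm.filtered.Bessel.kernel}), and sum the geometric series. The only minor slip is the parenthetical claim that the per-band constant is independent of $s$: since $b_\ell^{(-s)}\asymp 2^{-js}$ on the band $\ell\asymp 2^j$ only with $s$-dependent implied constants (and the localization bound for $\filBeKe{h}{s}$ itself carries an $s$-dependent prefactor), the constant $c$ in both Lemma~\ref{lem:q.norm.filtered.Bessel.kernel} and the final bound does in general depend on $s$, which is consistent with what the lemma as stated actually asserts.
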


We remark that for $p = 2$, the generalized Sobolev space $\sob{p}{s}{d}$ is a reproducing kernel Hilbert space with reproducing kernel $\BeKe{2s}$ (cf. \cite[Sec.~2.4]{BrSaSlWo2014}).
For further reading on Bessel potential spaces, we refer to the classical paper \cite{Str1983} and the more recent paper \cite{HuZae2009}. For the spherical case, we rely on \cite{BCCGST2013}.

\subsection{Embedding results}
\label{subsec:embedding}

%
For the readers convenience, we briefly summarize some relevant embedding results (see, e.g.,  Aubin~\cite{Au:1998}).

\begin{proposition}[Continuous embedding into $C( \sph{d} )$]
\label{prop:embed.Wp}
Let $d \geq 1$. The Sobolev space $\sob{p}{s}{d}$ is continuously embedded into $C( \sph{d} )$ if $s > d/p$.
\end{proposition}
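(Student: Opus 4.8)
The plan is to combine the Bessel‑potential representation of $\sob{p}{s}{d}$ (Proposition~\ref{prop:Bessel.potential}) with the boundedness of the $\Lp{q}{d}$‑norm of the Bessel kernel (Lemma~\ref{lem:q.norm.boundedness.Bessel.kernel}). Let $1/p + 1/q = 1$ and let $f \in \sob{p}{s}{d}$ with $s > d/p$. By Proposition~\ref{prop:Bessel.potential} we may write $f = \BeKe{s} \ast g$ for some $g \in \Lp{p}{d}$ with $\|g\|_p = \|f\|_{\sob{p}{s}{d}}$, so that $f(\PT{x}) = \int_{\sph{d}} \BeKe{s}(\PT{y}\cdot\PT{x}) \, g(\PT{y}) \, \dd\sigma_d(\PT{y})$ for a.e.\ $\PT{x}$. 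Applying H\"older's inequality in this integral, and noting that $\|\BeKe{s}(\,\cdot\,\PT{x})\|_q = \|\BeKe{s}\|_{q,d}$ is independent of $\PT{x}$, gives $|f(\PT{x})| \leq \|\BeKe{s}\|_{q,d}\,\|g\|_p$. By Lemma~\ref{lem:q.norm.boundedness.Bessel.kernel} the quantity $\|\BeKe{s}\|_{q,d}$ is finite precisely because $s > d/p$, and therefore $\|f\|_{C} \leq \|\BeKe{s}\|_{q,d}\,\|f\|_{\sob{p}{s}{d}}$. This is the desired bound on the embedding constant; it remains only to show that the relevant representative of $f$ is genuinely continuous on $\sph{d}$.

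For continuity I would distinguish two cases. If $1 < p \leq \infty$, so that $1 \leq q < \infty$, then continuous zonal kernels are dense in $\mathbb{L}_{q,d}([-1,1])$ and rotations act continuously on $C(\sph{d})$; hence the map $\PT{x} \mapsto \BeKe{s}(\,\cdot\,\PT{x})$ is continuous from $\sph{d}$ into $\mathbb{L}_{q,d}$. Combining this with H\"older's inequality as above yields $|f(\PT{x}_1) - f(\PT{x}_2)| \leq \|\BeKe{s}(\,\cdot\,\PT{x}_1) - \BeKe{s}(\,\cdot\,\PT{x}_2)\|_{q,d}\,\|g\|_p \to 0$ as $\PT{x}_1 \to \PT{x}_2$, so $f$ agrees a.e.\ with a function in $C(\sph{d})$. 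If $p = 1$, the hypothesis forces $s > d$, and then $b_\ell^{(-s)} Z(d,\ell) \asymp (1+\ell)^{d-1-s}$ is summable, so the defining series~\eqref{eq:Bessel.kernel} converges uniformly; thus $\BeKe{s}$ is continuous on $[-1,1]$ and $(\PT{x},\PT{y}) \mapsto \BeKe{s}(\PT{x}\cdot\PT{y})$ is uniformly continuous on $\sph{d} \times \sph{d}$. Dominated convergence (dominant $\big(\sup_{[-1,1]}|\BeKe{s}|\big)\,|g|$) then shows directly that $\PT{x} \mapsto \int_{\sph{d}} \BeKe{s}(\PT{y}\cdot\PT{x}) g(\PT{y}) \, \dd\sigma_d(\PT{y})$ is continuous.

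The H\"older estimate is routine; the only delicate point is upgrading essential boundedness of $f$ to the existence of a continuous representative. For $p > 1$ this rests on continuity of translation in $\mathbb{L}_{q,d}$ with $q < \infty$, which fails at the endpoint $q = \infty$ — which is exactly why the case $p = 1$ must instead exploit that the hypothesis $s > d/p$ then reads $s > d$ and makes the kernel $\BeKe{s}$ itself continuous. (Alternatively one can treat both cases uniformly by approximating $g$ in $\Lp{p}{d}$ by continuous functions, valid since $p < \infty$, and $\BeKe{s}$ in $\mathbb{L}_{1,d}$ by continuous zonal kernels, and using that $C(\sph{d})$ is closed under uniform limits; this is the same argument repackaged.)
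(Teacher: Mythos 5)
The paper states Proposition~\ref{prop:embed.Wp} without proof; it is presented as a standard embedding fact (with a reference in spirit to the Bessel-potential literature). So there is no ``paper proof'' to compare against, and your argument must stand on its own.

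It does. Your first step is exactly the one the paper's own machinery invites: write $f = \BeKe{s}\ast g$ via Proposition~\ref{prop:Bessel.potential} with $\|g\|_p = \|f\|_{\sob{p}{s}{d}}$, apply H\"older (equivalently Young's inequality~\eqref{eq:Young.s.ineq} with $r = \infty$), and invoke Lemma~\ref{lem:q.norm.boundedness.Bessel.kernel} to get the uniform bound $\|f\|_\infty \leq \|\BeKe{s}\|_{q,d}\,\|f\|_{\sob{p}{s}{d}}$, with $\|\BeKe{s}\|_{q,d}$ finite precisely because $s > d/p$. That is the right estimate and the logical order is consistent with the paper (Lemma~\ref{lem:q.norm.boundedness.Bessel.kernel} is independent of the embedding).

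The part that is genuinely easy to overlook --- and you handle it correctly --- is upgrading from $\mathbb{L}_\infty$ to a continuous representative. Your case split is exactly right: for $1 < p \leq \infty$ you have $q < \infty$, so density of continuous zonal kernels in $\mathbb{L}_{q,d}$ plus rotation invariance of the norm gives continuity of $\PT{x} \mapsto \BeKe{s}(\cdot\,\PT{x})$ into $\mathbb{L}_{q,d}$, and H\"older transfers this to $f$; for $p = 1$ the endpoint $q = \infty$ kills that argument, but then the hypothesis becomes $s > d$, which makes the series~\eqref{eq:Bessel.kernel} for $\BeKe{s}$ uniformly convergent (since $b_\ell^{(-s)} Z(d,\ell) \asymp (1+\ell)^{d-1-s}$ is summable and $|P_\ell^{(d)}| \leq 1$), so $\BeKe{s}$ is itself continuous and dominated convergence finishes the job. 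Your closing remark that both cases can be merged by approximating $g$ by continuous functions (valid for $p < \infty$) and $\BeKe{s}$ in $\mathbb{L}_{1,d}$ is a clean alternative. In short: the proposal is correct, complete, and fits naturally into the paper's framework; since the paper gives no proof, this would be a reasonable proof to insert.
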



For fixed $p$, smoother Sobolev spaces are included in coarser ones:

\begin{proposition}[Continuous embedding, $p$ fixed]
Let $d \geq 1$. For fixed $p$ with $1 \leq p \leq \infty$, $\sob{p}{s^\prime}{d}$ is continuously embedded into $\sob{p}{s}{d}$ if $0 \leq s < s^\prime < \infty$.
\end{proposition}

%

The standard embedding results for $\mathbb{L}_p$-spaces immediately yield the following embedding of $\sob{p^\prime}{s}{d}$ into $\sob{p}{s}{d}$, $ p < p^\prime$:

\begin{proposition}[Continuous embedding, $s$ fixed]
Let $d \geq 1$. For fixed $s$ with $0 \leq s < \infty$, $\sob{p^\prime}{s}{d}$ is continuously embedded into $\sob{p}{s}{d}$ if $1 \leq p < p^\prime \leq \infty$.
\end{proposition}

\section{Worst-case error and QMC-design sequences for $\sob{p}{s}{d}$}
\label{sec:wce}

\subsection{Worst-case error}

We recall that the definition of worst-case error is given in \eqref{eq:wceB}. Let $\nu_N \DEF \nu[X_N]$ be the atomic measure associated with $X_N = \{ \PT{x}_1,\ldots,\PT{x}_N \}$ that places the point mass $1/N$ at each point in $X_N$; i.e.,
\begin{equation*}
\nu_N = \nu[X_N] = \frac{1}{N} \sum_{j=1}^N \delta_{\PT{x}_{j}}.
\end{equation*}
Then the error of integration of a continuous function $f$ on $\sph{d}$ can be written as
\begin{equation*}
\numint[X_N](f) - \xctint(f) = \int_{\sph{d}} f( \PT{x} ) \, \dd \mu_N( \PT{x} ),
\end{equation*}
with the signed measure $\mu_N$ defined by $\mu_N = \nu_N - \sigma_d$.
For the Sobolev space $\sob{p}{s}{d}$ with $s > d/p$, the worst-case error has the following form in terms of the Bessel kernel: let
\begin{equation} \label{eq:SRB-Bessel}
\BeKeMod{s}( t ) \DEF \BeKe{s}( t ) - 1 = \sum_{\ell=1}^{\infty} b_{\ell}^{(-s)} Z(d,\ell) P_{\ell}^{(d)}( t ), \qquad -1 \leq t \leq 1,
\end{equation}
then the worst-case error is equal to the $\Lp{q}{d}$-norm of the following function,
\begin{equation}\label{eq:SBF-Bessel}
\BeKeNumIntErr{N}{s}( \PT{y} ) \DEF \BeKeNumIntErr{}{s}[X_N]( \PT{y} ) \DEF \frac{1}{N} \sum_{j=1}^{N} \BeKe{s}( \PT{x}_{j} \cdot \PT{y} ) - 1 = \frac{1}{N} \sum_{j=1}^{N} \BeKeMod{s}( \PT{x}_{j} \cdot \PT{y} ), \qquad \PT{y} \in \sph{d}.
\end{equation}
For each fixed $\PT{y} \in \sph{d}$, this function represents the error of numerical integration of the zonal function $\PT{x} \mapsto \BeKe{s}( \PT{x} \cdot \PT{y} )$, $\PT{x} \in \sph{d}$, of the QMC method based on the node set $X_N = \{ \PT{x}_1, \ldots, \PT{x}_N \} \subset \sph{d}$.

\begin{theorem} \label{thm:wce.for.Wp}
Let $d\geq1$, $1 \leq p, q \leq \infty$ with $1/p + 1/q = 1$ and $s > d / p$. Then, for a QMC method $\numint[X_N]$ with node set $X_N = \{ \PT{x}_1,\ldots,\PT{x}_N \} \subset \sph{d}$,
\begin{equation} \label{eq:wce.in.thm}
\WCE(\numint[X_N]; \sob{p}{s}{d} ) = \left\| \int_{\sph{d}} \BeKe{s}( \PT{x} \cdot \PT{\cdot} ) \dd \mu_N( \PT{x} ) \right\|_q = \left\| \BeKeNumIntErr{N}{s} \right\|_q.
\end{equation}
\end{theorem}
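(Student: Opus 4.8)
The main structural input is Proposition~\ref{prop:Bessel.potential}: it represents the unit ball of $\sob{p}{s}{d}$ as the image, under convolution with the Bessel kernel $\BeKe{s}$, of the unit ball of $\Lp{p}{d}$, and it is through this representation that I would transport the cubature functional before applying $\Lp{p}{d}$--$\Lp{q}{d}$ duality. First note that the two expressions on the right of \eqref{eq:wce.in.thm} coincide trivially: $\mu_N = \nu_N - \sigma_d$ has total mass zero and, by \eqref{eq:SRB-Bessel}, $\BeKeMod{s} = \BeKe{s} - 1$ has no $\ell = 0$ term and hence vanishing $\sigma_d$-mean, so that
\begin{equation*}
\int_{\sph{d}} \BeKe{s}( \PT{x} \cdot \PT{y} ) \, \dd \mu_N( \PT{x} ) = \int_{\sph{d}} \BeKeMod{s}( \PT{x} \cdot \PT{y} ) \, \dd \mu_N( \PT{x} ) = \frac{1}{N} \sum_{j=1}^{N} \BeKeMod{s}( \PT{x}_j \cdot \PT{y} ) = \BeKeNumIntErr{N}{s}( \PT{y} ), \qquad \PT{y} \in \sph{d},
\end{equation*}
the last equality being the definition \eqref{eq:SBF-Bessel}. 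It then remains to prove $\WCE(\numint[X_N]; \sob{p}{s}{d}) = \| \BeKeNumIntErr{N}{s} \|_q$.

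Since $s > d/p$, Proposition~\ref{prop:embed.Wp} gives $\sob{p}{s}{d} \hookrightarrow C( \sph{d} )$, so the error $\numint[X_N](f) - \xctint(f) = \int_{\sph{d}} f \, \dd \mu_N$ is well defined on the unit ball of $\sob{p}{s}{d}$; and by Proposition~\ref{prop:Bessel.potential} the assignment $g \mapsto f = \BeKe{s} \ast g$ is an isometric bijection of the unit ball of $\Lp{p}{d}$ onto the unit ball of $\sob{p}{s}{d}$. Therefore
\begin{equation*}
\WCE(\numint[X_N]; \sob{p}{s}{d}) = \sup_{\| g \|_p \leq 1} \left| \int_{\sph{d}} \big( \BeKe{s} \ast g \big)( \PT{x} ) \, \dd \mu_N( \PT{x} ) \right|.
\end{equation*}
Next I would interchange the order of integration. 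Because $| \mu_N |$ is a finite measure and Hölder's inequality gives $\int_{\sph{d}} | \BeKe{s}( \PT{x} \cdot \PT{y} ) | \, | g( \PT{y} ) | \, \dd \sigma_d( \PT{y} ) \leq \| \BeKe{s} \|_{q,d} \, \| g \|_p$ for every $\PT{x} \in \sph{d}$, with $\| \BeKe{s} \|_{q,d} < \infty$ by Lemma~\ref{lem:q.norm.boundedness.Bessel.kernel}, Tonelli's theorem justifies Fubini and yields
\begin{equation*}
\int_{\sph{d}} \big( \BeKe{s} \ast g \big)( \PT{x} ) \, \dd \mu_N( \PT{x} ) = \int_{\sph{d}} g( \PT{y} ) \left( \int_{\sph{d}} \BeKe{s}( \PT{x} \cdot \PT{y} ) \, \dd \mu_N( \PT{x} ) \right) \dd \sigma_d( \PT{y} ) = \int_{\sph{d}} g( \PT{y} ) \, \BeKeNumIntErr{N}{s}( \PT{y} ) \, \dd \sigma_d( \PT{y} ),
\end{equation*}
using the computation of the first paragraph for the last equality.

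Taking the supremum over $\| g \|_p \leq 1$ and invoking the duality between $\Lp{p}{d}$ and $\Lp{q}{d}$ produces $\| \BeKeNumIntErr{N}{s} \|_q$, which is finite because $\| \BeKeNumIntErr{N}{s} \|_q \leq \| \BeKeMod{s} \|_{q,d} \leq \| \BeKe{s} \|_{q,d} + 1 < \infty$, again by Lemma~\ref{lem:q.norm.boundedness.Bessel.kernel} and the triangle inequality. This establishes \eqref{eq:wce.in.thm}. Since the genuine analysis --- the Bessel potential representation and the finiteness of $\| \BeKe{s} \|_{q,d}$ --- is already packaged into Proposition~\ref{prop:Bessel.potential} and Lemma~\ref{lem:q.norm.boundedness.Bessel.kernel}, the argument is otherwise routine; the only points that need care are the two endpoints of the duality step. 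For $p = \infty$, $q = 1$, the supremum is attained at $g = \operatorname{sgn}( \BeKeNumIntErr{N}{s} )$. For $p = 1$, $q = \infty$, one uses that the norm dual to $\| \cdot \|_{\Lp{1}{d}}$ is $\| \cdot \|_{\Lp{\infty}{d}}$ and that $\BeKeNumIntErr{N}{s} \in \Lp{\infty}{d}$; in fact $\BeKeNumIntErr{N}{s}$ is continuous, being a finite average of the zonal kernels $\PT{y} \mapsto \BeKeMod{s}( \PT{x}_j \cdot \PT{y} )$ whose defining series \eqref{eq:SRB-Bessel} converges uniformly when $s > d$ (its general term being dominated by a constant multiple of $\ell^{d-1-s}$ with $d-1-s<-1$).
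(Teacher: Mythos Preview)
Your proof is correct and follows essentially the same route as the paper's: represent $f\in\sob{p}{s}{d}$ as a Bessel potential $f=\BeKe{s}\ast g$ via Proposition~\ref{prop:Bessel.potential}, swap integrals by Fubini to obtain the pairing $\int g\,\BeKeNumIntErr{N}{s}\,\dd\sigma_d$, and then use $\Lp{p}{d}$--$\Lp{q}{d}$ duality. The only cosmetic difference is that the paper unpacks the duality step by explicitly writing down the extremizing $u\in\Lp{p}{d}$ (and an $\varepsilon$-approximant when $p=1$) and then pushing it back to $\sob{p}{s}{d}$ via $\BeOp{s}$, whereas you invoke the isometry $g\mapsto\BeKe{s}\ast g$ and the duality pairing abstractly; the content is the same.
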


\begin{remark*}
In the Hilbert space setting $p = q = 2$, one has the closed form representation
\begin{equation} \label{eq:Hilbert.space.setting.closed.form}
\left\| \BeKeNumIntErr{N}{s} \right\|_2 = \left( \frac{1}{N^2} \sum_{j=1}^N \sum_{k=1}^N \BeKeMod{2s}( \PT{x}_j \cdot \PT{x}_k ) \right)^{1/2},
\end{equation}
which follows from the relation
\begin{equation} \label{eq:BeKe.identity}
\int_{\sph{d}} \BeKe{\alpha}( \PT{x} \cdot \PT{z} ) \BeKe{\beta}( \PT{y} \cdot \PT{z} ) \, \dd \sigma_d( \PT{z} ) = \BeKe{\alpha + \beta}( \PT{x} \cdot \PT{y} ), \qquad \text{$\PT{x}, \PT{y} \in \sph{d}$, $\alpha, \beta > 0$.}
\end{equation}
\end{remark*}

\begin{proof}[Proof of Theorem~\ref{thm:wce.for.Wp}]
First, note that the last expression in \eqref{eq:wce.in.thm} follows from substituting ${\mu_N = \nu_N - \sigma_d}$ into the middle expression in \eqref{eq:wce.in.thm}. Since $s > d / p$, the Sobolev space $\sob{p}{s}{d}$ is continuously embedded into $C( \sph{d} )$ by Proposition~\ref{prop:embed.Wp}, and every element in $\sob{p}{s}{d}$ has a continuous representative. For $f \in \sob{p}{s}{d}$ the following inequality, due to \cite[Corollary~2.4]{BCCGST2013}, can be derived from \eqref{eq:Bessel.potential} together with Fubini's theorem and H{\"o}lder's inequality,
\begin{equation} \label{eq:WCE.upper.bound}
\left| \int_{\sph{d}} f( \PT{x} ) \dd \mu_N( \PT{x} ) \right| \leq \left\{ \int_{\sph{d}} \left| \int_{\sph{d}} \BeKe{s}( \PT{x} \cdot \PT{y} ) \dd \mu_N( \PT{x} ) \right|^q \dd \sigma_d( \PT{y} ) \right\}^{1/q} \left\| f \right\|_{\sob{p}{s}{d}}.
\end{equation}
These integrals are well defined and finite. Therefore,
\begin{equation} \label{eq:WCE.upper.bound.B}
\WCE(\numint[X_N]; \sob{p}{s}{d} ) \leq \left\| \int_{\sph{d}} \BeKe{s}( \PT{x} \cdot \PT{\cdot} ) \dd \mu_N( \PT{x} ) \right\|_q = \left\| \BeKeNumIntErr{N}{s} \right\|_q.
\end{equation}

We complete the proof by constructing a bad function $f_{\mathrm{\mathbf{bad}}}$ with $\| f_{\mathrm{\mathbf{bad}}} \|_{\sob{p}{s}{d}} = 1$ whose absolute integration error is equal to the right-hand side above when $1 \leq q < \infty$, and giving a lower estimate argument for $\WCE(\numint[X_N]; \sob{p}{s}{d} )$ in the case $q = \infty$.
Let $1 \leq q < \infty$ and $1/p + 1/q = 1$. Consider the function $\BeKeNumIntErr{N}{s}$ from \eqref{eq:SBF-Bessel}. As $\BeKeNumIntErr{N}{s} \in \Lp{q}{d}$, there exists a function $u \in \Lp{p}{d}$ such that
\begin{equation*} 
\left\| u \right\|_p = 1 \qquad \text{and} \qquad \left| \int_{\sph{d}} \BeKeNumIntErr{N}{s}( \PT{y} ) \, u( \PT{y} ) \dd \sigma_d( \PT{y} ) \right| = \left\| \BeKeNumIntErr{N}{s} \right\|_q:
\end{equation*}
one can choose
\begin{equation*}
u( \PT{y} ) =
\begin{cases}
\dfrac{\left| \BeKeNumIntErr{N}{s}( \PT{y} ) \right|^{q-1}}{\left\| \BeKeNumIntErr{N}{s} \right\|_q^{q-1}} \, \dfrac{\left| \BeKeNumIntErr{N}{s}( \PT{y} ) \right|}{\BeKeNumIntErr{N}{s}( \PT{y} )} & \text{if $\BeKeNumIntErr{N}{s}( \PT{y} ) \neq 0$,} \\[.5em]
0 & \text{if $\BeKeNumIntErr{N}{s}( \PT{y} ) = 0$,}
\end{cases}
\qquad \PT{y} \in \sph{d}.
\end{equation*}
Now, set $v = \BeOp{s}[ u ]$. Then $v \in \sob{p}{s}{d}$. In fact, by definition of $u$,
\begin{equation*}
\left\| v \right\|_{\sob{p}{s}{d}} = \left\| \BeOp{-s}[ v ] \right\|_p = \left\| \BeOp{-s}[ \BeOp{s}[ u ] ] \right\|_p = \left\| u \right\|_p = 1.
\end{equation*}
The bad function $f_{\mathrm{\mathbf{bad}}}$ with $\| f_{\mathrm{\mathbf{bad}}} \|_{\sob{p}{s}{d}} = 1$ is now the continuous representative of $v$ in $\sob{p}{s}{d}$. Because of the convolution formula $\BeOp{s}[u] = \BeKe{s} \ast u$, we obtain for the absolute error of numerical integration by the QMC method $\numint[X_N]$,
\begin{equation*}
\begin{split}
\left| \int_{\sph{d}} f_{\mathrm{\mathbf{bad}}}( \PT{x} ) \, \dd \mu_N( \PT{x} ) \right|
&= \left| \int_{\sph{d}} \int_{\sph{d}} \BeKe{s}( \PT{x} \cdot \PT{y} ) u( \PT{y} ) \, \dd \sigma_d( \PT{y} ) \dd \mu_N( \PT{x} ) \right| \\
&= \left| \int_{\sph{d}} u( \PT{y} ) \, \BeKeNumIntErr{N}{s}( \PT{y} ) \, \dd \sigma_d( \PT{y} ) \right| = \left\| \BeKeNumIntErr{N}{s} \right\|_q.
\end{split}
\end{equation*}
This lower bound of $\WCE(\numint[X_N]; \sob{p}{s}{d} )$ matches the upper bound in \eqref{eq:WCE.upper.bound.B}.

Let $q = \infty$ (i.e., $p = 1$). By the definition of the $\Lp{\infty}{d}$-norm, to every $\varepsilon > 0$ there exists a subset $E_\varepsilon \subset \sph{d}$ of positive $\sigma_d$-measure such that $| \BeKeNumIntErr{N}{s}( \PT{y} ) | \geq \| \BeKeNumIntErr{N}{s} \|_\infty - \varepsilon$ on $E_\varepsilon$ and a function $u_\varepsilon \in \Lp{1}{d}$ satisfying
\begin{equation*}
\left\| u_\varepsilon \right\|_1 = 1 \qquad \text{and} \qquad \left| \int_{\sph{d}} \BeKeNumIntErr{N}{s}( \PT{y} ) \, u_\varepsilon( \PT{y} ) \dd \sigma_d( \PT{y} ) \right| \geq \left\| \BeKeNumIntErr{N}{s} \right\|_\infty - \varepsilon.
\end{equation*}
One can choose
\begin{equation*}
u_\varepsilon( \PT{y} ) =
\begin{cases}
\dfrac{\chi_\varepsilon( \PT{y} )}{\sigma_d( E_\varepsilon )} \, \dfrac{\left| \BeKeNumIntErr{N}{s}( \PT{y} ) \right|}{\BeKeNumIntErr{N}{s}( \PT{y} )} & \text{if $\BeKeNumIntErr{N}{s}( \PT{y} ) \neq 0$,} \\[.5em]
0 & \text{if $\BeKeNumIntErr{N}{s}( \PT{y} ) = 0$,}
\end{cases}
\qquad \PT{y} \in \sph{d},
\end{equation*}
where $\chi_\varepsilon \DEF \chi_{E_\varepsilon}$ is the characteristic function of the set $E_\varepsilon$. Similarly as before, one shows that $v_\varepsilon = \BeOp{s}[u_\varepsilon]$ is in $\sob{1}{s}{d}$ and $\| v_\varepsilon \|_{\sob{1}{s}{d}} = 1$. Taking $f_{\mathrm{\mathbf{bad}},\varepsilon}$ to be the continuous representative of $v_\varepsilon$ in $\sob{1}{s}{d}$, we arrive at
\begin{equation*}
\WCE(\numint[X_N]; \sob{1}{s}{d} ) \geq \left| \int_{\sph{d}} f_{\mathrm{\mathbf{bad}},\varepsilon}( \PT{x} ) \, \dd \mu_N( \PT{x} ) \right| = \left| \int_{\sph{d}} \BeKeNumIntErr{N}{s}( \PT{y} ) \, u_\varepsilon( \PT{y} ) \dd \sigma_d( \PT{y} ) \right| \geq \left\| \BeKeNumIntErr{N}{s} \right\|_\infty - \varepsilon.
\end{equation*}
Since $\varepsilon > 0$ is arbitrary, we have
\begin{equation*}
\WCE(\numint[X_N]; \sob{1}{s}{d} ) \geq \left\| \BeKeNumIntErr{N}{s} \right\|_\infty.
\end{equation*}
The result follows.
\end{proof}

\subsection{WCE Inequalities}\label{subsec:WCE.inequality}

The following property holds for the worst-case error of a QMC method for generalized Sobolev spaces with the same $s$ but different $p$.

\begin{theorem} \label{thm:embed.wce}
Let $d\geq1$, $1 \leq p < p^\prime \leq \infty$ and $s > d / p$. For every $N$-point set $X_N \subset \sph{d}$,
\begin{equation} \label{eq:wce Wp-1}
\WCE(\numint[X_N]; \sob{p^\prime}{s}{d} ) \leq \WCE(\numint[X_N]; \sob{p}{s}{d} ).
\end{equation}
\end{theorem}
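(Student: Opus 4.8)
The natural approach is to invoke the worst-case error formula of Theorem~\ref{thm:wce.for.Wp} for both Sobolev spaces and then compare the two $\mathbb{L}_q$-norms of the \emph{same} function $\BeKeNumIntErr{N}{s}$ by monotonicity of $\mathbb{L}_r$-norms on the probability space $(\sph{d},\sigma_d)$. The crucial point is that the Bessel kernel $\BeKe{s}$, and hence the function
\[
\BeKeNumIntErr{N}{s}(\PT{y}) = \frac{1}{N}\sum_{j=1}^N \BeKeMod{s}(\PT{x}_j\cdot\PT{y}),
\]
depends only on $s$ and $X_N$, not on the integrability index. So the only quantity that changes when we pass from $p$ to $p^\prime$ is the exponent of the norm in which we measure this fixed function.

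First I would record that, since $p < p^\prime$, we have $d/p^\prime < d/p < s$, so the hypothesis $s > d/p$ of Theorem~\ref{thm:wce.for.Wp} is satisfied for \emph{both} $p$ and $p^\prime$; in particular both Sobolev spaces embed continuously into $C(\sph{d})$ and the worst-case errors are finite. Let $q,q^\prime$ be the conjugate exponents of $p,p^\prime$. Applying Theorem~\ref{thm:wce.for.Wp} twice gives
\[
\WCE(\numint[X_N]; \sob{p}{s}{d}) = \big\| \BeKeNumIntErr{N}{s} \big\|_{q}, \qquad
\WCE(\numint[X_N]; \sob{p^\prime}{s}{d}) = \big\| \BeKeNumIntErr{N}{s} \big\|_{q^\prime}.
\]

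Next I would observe that $p < p^\prime$ forces $q \geq q^\prime$ (the map $r \mapsto r/(r-1)$ is non-increasing on $(1,\infty]$, with the usual conventions at the endpoints $p=1 \leftrightarrow q=\infty$ and $p^\prime=\infty \leftrightarrow q^\prime=1$). Since $\sigma_d$ is a probability measure, Hölder's inequality (or Jensen's inequality applied to the convex function $t\mapsto t^{q/q^\prime}$) yields $\|g\|_{q^\prime} \leq \|g\|_{q}$ for every $g \in \Lp{q}{d}$. Taking $g = \BeKeNumIntErr{N}{s}$, which lies in $\Lp{q}{d}$ by Lemma~\ref{lem:q.norm.boundedness.Bessel.kernel} together with the finiteness just noted, we conclude
\[
\WCE(\numint[X_N]; \sob{p^\prime}{s}{d}) = \big\| \BeKeNumIntErr{N}{s} \big\|_{q^\prime} \leq \big\| \BeKeNumIntErr{N}{s} \big\|_{q} = \WCE(\numint[X_N]; \sob{p}{s}{d}),
\]
which is \eqref{eq:wce Wp-1}.

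\textbf{Main obstacle.} There is essentially no analytic difficulty here: the whole argument rests on having already established the kernel representation of the worst-case error in Theorem~\ref{thm:wce.for.Wp}. The only points requiring a line of care are the bookkeeping of conjugate exponents at the endpoints $p=1$ and $p^\prime=\infty$, and the remark that the comparison of norms uses the normalization $\sigma_d(\sph{d})=1$ in an essential way (the inequality would reverse on a space of infinite measure). Thus the "hard part" was entirely front-loaded into Theorem~\ref{thm:wce.for.Wp}; given that result, this theorem is immediate. (Note that Theorem~\ref{thm:embed.wce} also follows directly from the embedding $\sob{p^\prime}{s}{d}\hookrightarrow \sob{p}{s}{d}$ of Proposition~\ref{prop:embed.W.p}, but the kernel-based proof additionally shows the embedding constant can be taken to be $1$.)
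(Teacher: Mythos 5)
Your proof is correct, but it takes a genuinely different route from the paper's. The paper does \emph{not} invoke the kernel representation of Theorem~\ref{thm:wce.for.Wp} at all: it argues directly from the definition~\eqref{eq:wceB} of the worst-case error as a supremum over a unit ball. Using Proposition~\ref{prop:W.p.s.characterization}, the paper writes $\| f \|_{\sob{p}{s}{d}} = \| \BeOp{-s}[f] \|_p$ and applies Jensen's inequality (on the probability space $(\sph{d},\sigma_d)$) to the single function $\BeOp{-s}[f]$ to conclude $\| f \|_{\sob{p}{s}{d}} \leq \| f \|_{\sob{p^\prime}{s}{d}}$, i.e.\ the embedding $\sob{p^\prime}{s}{d} \hookrightarrow \sob{p}{s}{d}$ has constant $1$. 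The unit ball of $\sob{p}{s}{d}$ therefore contains that of $\sob{p^\prime}{s}{d}$, and the inequality follows immediately from the definition of the supremum. Your proof instead applies Theorem~\ref{thm:wce.for.Wp} on both sides (after correctly checking $s > d/p > d/p^\prime$) and then applies the very same monotonicity-of-$\mathbb{L}_r$-norms fact to the fixed function $\BeKeNumIntErr{N}{s}$. The two proofs thus push the $\mathbb{L}_p$-monotonicity through at different levels: the paper on the Sobolev-norm side, you on the dual kernel side. Both are valid; the paper's is lighter because it bypasses Theorem~\ref{thm:wce.for.Wp} entirely.

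One inaccuracy in your closing remark: you suggest the embedding-based argument does not yield embedding constant $1$, whereas the kernel-based route does. In fact the paper's embedding proof explicitly obtains $c=1$ from Jensen's inequality — that is precisely why the supremum comparison goes through with no extra constant. The two approaches are on equal footing in this respect.
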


\begin{proof}
Let $1 \leq p < p^\prime \leq \infty$ and $s > d / p$. Then one has the continuous embedding inclusions $\sob{p^\prime}{s}{d} \subset \sob{p}{s}{d} \subset C( \sph{d} )$ and, in particular, $\| f \|_{\sob{p}{s}{d}} \leq c \, \| f \|_{\sob{p^\prime}{s}{d}}$ with $c = 1$ because of $\int_{\sph{d}} \dd \sigma_d = 1$ (Proposition~\ref{prop:W.p.s.characterization} and Jensen's inequality). Thus, the unit ball in $\sob{p}{s}{d}$ is larger than the one in $\sob{p^\prime}{s}{d}$ and the result follows from \eqref{eq:wceB}.
\end{proof}

As a consequence of Theorem~\ref{thm:embed.wce} we provide the following proof.

\begin{proof}[Proof of Theorem~\ref{thm:QMC.desing.property.A}]
Let $( X_N )$ be a QMC-design sequence for $\sob{p}{s}{d}$, where $1 \leq p < \infty$ and $s > d/p$. Then, there exists a constant $c > 0$ such that $\WCE(\numint[X_N]; \sob{p}{s}{d} ) \leq c \, N^{-s/d}$ for all $X_N$. Suppose $p < p^\prime \leq \infty$. Then by Theorem~\ref{thm:embed.wce},
\begin{equation*}
\WCE(\numint[X_N]; \sob{p^\prime}{s}{d} ) \leq \WCE(\numint[X_N]; \sob{p}{s}{d} ) \leq c \, N^{-s/d} \qquad \text{for all $X_N$}.
\end{equation*}
Hence by Definition~\ref{def:specific_approx.sph.design}, $( X_N )$ is a QMC-design sequence for $\sob{p^\prime}{s}{d}$.
\end{proof}

Next, we consider worst-case error interrelations for generalized Sobolev spaces with the same $p$ but different $s$. In the Hilbert space setting $p = 2$ the reproducing kernel Hilbert space method gives a heat kernel representation of the worst-case error which leads to the following result.

\begin{proposition}[{\cite[Lemma~26]{BrSaSlWo2014}}] \label{prop:WCE.inequality.p.EQ.2}
Let $d\geq1$ and $s > d/2$. If $\WCE(\numint[X_N]; \sob{2}{s}{d} ) < 1$, then
\begin{equation} \label{eq:wce for W2^s}
\WCE(\numint[X_N]; \sob{2}{s^\prime}{d} ) < c_{d,s,s^\prime} \left[ \WCE(\numint[X_N]; \sob{2}{s}{d} ) \right]^{s^\prime / s}, \qquad d/2 < s^\prime < s,
\end{equation}
where $c_{d,s,s^\prime} > 0$ depends on the norms for $\sob{2}{s}{d}$ and
$\sob{2}{s^\prime}{d}$, but is independent of~$N$.
\end{proposition}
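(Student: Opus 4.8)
The plan is to exploit the extra structure available only at $p = q = 2$, where $\sob{2}{s}{d}$ is a reproducing kernel Hilbert space with kernel $\BeKe{2s}$. By Theorem~\ref{thm:wce.for.Wp} and the closed form~\eqref{eq:Hilbert.space.setting.closed.form}, for every $\tau > d/2$ one has
\begin{equation*}
\left[ \WCE(\numint[X_N]; \sob{2}{\tau}{d}) \right]^2 = \frac{1}{N^2} \sum_{j=1}^N \sum_{k=1}^N \BeKeMod{2\tau}( \PT{x}_j \cdot \PT{x}_k ) = \sum_{\ell=1}^\infty ( 1 + \lambda_\ell )^{-\tau} \, a_\ell ,
\end{equation*}
where $a_\ell \DEF \frac{1}{N^2} \sum_{j,k=1}^N Z(d,\ell) \, P_\ell^{(d)}( \PT{x}_j \cdot \PT{x}_k )$ is non-negative by the addition theorem~\eqref{eq:addition.theorem} and satisfies $a_\ell \leq Z(d,\ell)$ (since $| P_\ell^{(d)} | \leq 1$). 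First I would use the subordination identity $( 1 + \lambda_\ell )^{-\tau} = \Gamma(\tau)^{-1} \int_0^\infty t^{\tau - 1} \e^{-( 1 + \lambda_\ell ) t} \dd t$ together with Tonelli's theorem to rewrite this as the heat-kernel representation
\begin{equation*}
\left[ \WCE(\numint[X_N]; \sob{2}{\tau}{d}) \right]^2 = \frac{1}{\Gamma(\tau)} \int_0^\infty t^{\tau - 1} \, \e^{-t} \, W_N(t) \, \dd t , \qquad W_N(t) \DEF \sum_{\ell=1}^\infty \e^{-\lambda_\ell t} \, a_\ell ,
\end{equation*}
valid for each $\tau > d/2$; here $W_N$ is non-negative, non-increasing and completely monotone on $(0,\infty)$, a ``heat-flow'' worst-case error at time $t$.

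Next I would establish two complementary bounds on $W_N$. The \emph{unconditional} one: from $a_\ell \leq Z(d,\ell)$ and the standard on-diagonal heat-kernel estimate on $\sph{d}$,
\begin{equation*}
W_N(t) \leq \sum_{\ell=0}^\infty \e^{-\lambda_\ell t} Z(d,\ell) \leq c_d \, t^{-d/2} , \qquad 0 < t \leq 1 .
\end{equation*}
The \emph{conditional} one: writing $V_\tau \DEF \WCE(\numint[X_N]; \sob{2}{\tau}{d})$ and using that $W_N$ decreases, for every $t > 0$
\begin{equation*}
\Gamma(s) \, V_s^2 \geq \int_0^t u^{s-1} \e^{-u} W_N(u) \, \dd u \geq W_N(t) \, \e^{-t} \int_0^t u^{s-1} \dd u = W_N(t) \, \e^{-t} \, \frac{t^s}{s} ,
\end{equation*}
hence $W_N(t) \leq \Gamma(s+1) \, \e^{t} \, t^{-s} \, V_s^2$.

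I would then insert both bounds into the representation of $V_{s'}^2$ and split the integral at a threshold $t_0 \in (0,1]$: on $[t_0,\infty)$ use $t^{s'-1} \leq t_0^{s'-s} \, t^{s-1}$, bounding that part by $(\Gamma(s)/\Gamma(s')) \, t_0^{s'-s} \, V_s^2$; on $(0,t_0]$ use the unconditional bound, which is integrable against $t^{s'-1}$ exactly because $s' > d/2$ and contributes a term of order $t_0^{s'-d/2}$. Equating the two contributions fixes $t_0$ as a power of $V_s^2$, and since $V_s < 1$ this choice does satisfy $t_0 \leq 1$; one obtains $V_{s'} \leq c_{d,s,s'} \, V_s^{\beta}$ for a positive exponent $\beta$, the constant being built from $c_d$ and values of the Gamma function and hence depending only on $d$, $s$, $s'$ (that is, on the two Sobolev norms) and not on $N$.

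The hard part will be improving the exponent to the sharp value $\beta = s'/s$ asserted in the statement. Carried out with only $a_\ell \leq Z(d,\ell)$, the splitting above yields the weaker exponent $(2s'-d)/(2s-d)$, which is strictly less than $s'/s$ (indeed $s(2s'-d) - s'(2s-d) = (s'-s)d < 0$). To close this gap one has to treat $W_N(t)$ for small $t$ more carefully, and this is where the hypothesis $V_s < 1$ is genuinely used: a small worst-case error for $\sob{2}{s}{d}$ forces $X_N$ to be well distributed -- it integrates spherical polynomials of degree up to a power of $V_s^{-1}$ with controlled error -- which should make $W_N(t)$ grow strictly slower than the generic rate $t^{-d/2}$ on the range of $t$ that governs the split; feeding that refined short-time bound into the same argument produces $s'/s$. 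I expect this short-time analysis of $W_N$ to be the technical heart of the proof, and it is precisely the step that relies on the Hilbert-space setting -- the general-$p$ analogue (Theorem~\ref{thm:QMC.desing.property.B}) instead pays a power of the mesh ratio of $X_N$.
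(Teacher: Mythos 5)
The paper itself gives no proof of this proposition --- it is quoted from \cite[Lemma~26]{BrSaSlWo2014}, with only the one-line pointer that ``the reproducing kernel Hilbert space method gives a heat kernel representation of the worst-case error'' --- so there is no full argument in the paper to compare yours to. Your heat-kernel framework is consistent with that hint, and the mechanics you carry out are correct: the subordination formula, the positivity and monotonicity of $W_N$, the on-diagonal bound $W_N(t)\lesssim t^{-d/2}$, the Chebyshev-type bound from the integral representation, and the split of the integral at a threshold $t_0$. That split does produce the interpolation exponent $(2s'-d)/(2s-d)$, and you are right that this is strictly smaller than the claimed $s'/s$.

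The genuine gap is the step you flag but do not carry out: upgrading $(2s'-d)/(2s-d)$ to $s'/s$. The route you suggest --- that $V_s<1$ ``forces $X_N$ to be well distributed'' and so yields a refined short-time bound on $W_N$ --- does not hold up. First, $V_s<1$ is a very weak hypothesis (for instance it holds with $N=1$ whenever $\BeKeMod{2s}(1)<1$, and for any configuration once $N$ is moderately large), so it cannot by itself force quantitative polynomial exactness. Second, and more decisively, the information your argument actually retains about the coefficients $a_\ell := \frac{1}{N^2}\sum_{j,k}Z(d,\ell)P_\ell^{(d)}(\PT{x}_j\cdot\PT{x}_k)$ is only $0\le a_\ell\le Z(d,\ell)$ together with $\sum_\ell(1+\lambda_\ell)^{-s}a_\ell=V_s^2$, and this is provably insufficient: taking $a_\ell=Z(d,\ell)$ for $\ell>L$ and $a_\ell=0$ for $\ell\le L$ satisfies both constraints, has $V_s\asymp L^{-(s-d/2)}\to0$, saturates the exponent $(2s'-d)/(2s-d)$, and makes $V_{s'}/V_s^{s'/s}\asymp L^{d(s-s')/(2s)}\to\infty$. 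That particular $a_\ell$-sequence cannot come from an actual point set (a configuration cannot be simultaneously an $L$-design and totally clustered), which is why the proposition can still be true; but it shows that any correct proof must exploit structural properties of $a_\ell$ beyond the two facts you use. So the outcome is a valid proof of a strictly weaker inequality, plus an honest acknowledgement that the sharp exponent is missing, together with a proposed refinement that, as described, would not close the gap.
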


The proof of Theorem~\ref{thm:QMC.desing.property.B} is based on the following $\Lp{q}{d}$-Bernstein type inequality.

\begin{lemma} \label{lem:Bernstein.inequality} 
Let $d \geq 1$, $1 \leq p, q \leq \infty$ with $1/p + 1/q = 1$ and $s - d/p > \tau > 0$. Then the function $\BeKeNumIntErr{N}{s}$ for an $N$-point set $X_N = \{ \PT{x}_1,\ldots,\PT{x}_N \} \subset \sph{d}$ with mesh ratio $\MESHRATIO( X_N )$ satisfies
\begin{equation} \label{eq:Bernstein.inequality}
\left\| \BeKeNumIntErr{N}{s} \right\|_{\sob{q}{\tau}{d}} \leq c \left[ \MESHRATIO( X_N ) \right]^{d/p} N^{\tau/d} \left\| \BeKeNumIntErr{N}{s} \right\|_{q},
\end{equation}
where $c\ge 1$ depends only on $d$, $p$ and $q$, $s$ and $\tau$.
\end{lemma}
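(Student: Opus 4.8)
The plan is to reduce the inequality to a Bernstein (Nikolskii-type) estimate for zonal kernels whose "frequency content'' is controlled, and then to exploit the fact that $\BeKeNumIntErr{N}{s}$ is an average of $N$ translates of the single zonal function $\BeKeMod{s}$. First I would use a smooth dyadic (Littlewood--Paley) decomposition of the symbol $b_\ell^{(-s)}$: choose a $C^\infty$ filter $\eta$ supported on $[1/2,2]$ with $\sum_{j} \eta(\ell/2^j) = 1$ for $\ell \geq 1$, and write $\BeKe{s} = \BeKe{s}_0 + \sum_{j \geq 0} \filBeKe{j}{s}$ where $\filBeKe{j}{s}$ is the zonal filtered kernel built from the coefficients $\eta(\ell/2^j)\, b_\ell^{(-s)} \asymp 2^{-js}\,\eta(\ell/2^j)$, a polynomial kernel of degree $\lesssim 2^{j+1}$. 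Applying $\BeOp{-\tau}$ multiplies the $j$-th block symbol by $b_\ell^{(\tau)} \asymp 2^{j\tau}$ on the support of $\eta(\ell/2^j)$, so morally $\|\BeOp{-\tau}[\,\text{$j$-th block of }\BeKeNumIntErr{N}{s}\,]\|_q \lesssim 2^{j\tau}\,\|\text{$j$-th block}\|_q$, and summing a geometric-type series should give a factor of the form $L^\tau$ where $2^L$ is the effective cutoff degree. The role of the mesh ratio and of $N$ is to identify this cutoff: by quasi-uniformity-type reasoning the relevant degree scale is $\asymp \MESHRATIO(X_N)\,N^{1/d}$, which is where the exponents $[\MESHRATIO(X_N)]^{d/p}$ and $N^{\tau/d}$ will come from once one accounts for the $\Lp{q}{d}$ normalization of filtered kernels.

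The key technical input I would develop (or cite from Section~\ref{sec:Bernstein.inequality}, which the excerpt says is devoted exactly to this) is a Bernstein inequality for $\BeKeNumIntErr{N}{s}$ itself: one shows that $\BeOp{-\tau}[\BeKeNumIntErr{N}{s}]$ can be written, up to a controlled remainder, as a convolution of $\BeKeNumIntErr{N}{s}$ against a zonal kernel $K_\tau$ that behaves like a local (mollified) fractional-differentiation kernel at scale $1/(\MESHRATIO(X_N) N^{1/d})$, so that $\|\BeOp{-\tau}[\BeKeNumIntErr{N}{s}]\|_q \leq \|K_\tau\|_{1,d}\,\|\BeKeNumIntErr{N}{s}\|_q + (\text{high-frequency remainder})$ by Young's inequality \eqref{eq:Young.s.ineq.A}. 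The point is that $\BeKeNumIntErr{N}{s} = N^{-1}\sum_j \BeKeMod{s}(\PT{x}_j\cdot\PT{\cdot})$ is smooth on the scale of the separation distance $\SEP(X_N)$, because $\BeKeMod{s}$ decays like a fixed power of the geodesic distance and the node contributions cannot pile up more densely than $\SEP(X_N)^{-d}$; combined with $\COV(X_N) \geq c_d N^{-1/d}$ (the trivial lower bound from the introduction) one gets $\SEP(X_N) \gtrsim \MESHRATIO(X_N)^{-1} N^{-1/d}$, which feeds the scale into the estimate. Here the condition $s - d/p > \tau > 0$ is what makes $\BeKe{s}$ have $\Lp{q}{d}$-integrable derivatives up to order $\tau$ (via Lemma~\ref{lem:q.norm.boundedness.Bessel.kernel} applied with $s$ replaced by $s-\tau > d/p$), guaranteeing the remainder terms are finite and uniformly bounded.

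I expect the main obstacle to be the rigorous control of the "high-frequency remainder'', i.e.\ showing that the tail of the dyadic decomposition beyond the cutoff degree $\asymp \MESHRATIO(X_N) N^{1/d}$ contributes at most $O(1)$ times $\|\BeKeNumIntErr{N}{s}\|_q$ rather than something that blows up with $N$. This is where the mesh-ratio hypothesis is genuinely used: one needs a lower bound for $\|\BeKeNumIntErr{N}{s}\|_q$ (or at least a comparison of the high-frequency part to the full norm) that prevents the quotient from degenerating, and this in turn rests on a quadrature/sampling argument — essentially that the point set $X_N$, being quasi-uniform at scale $N^{-1/d}$, resolves frequencies up to $\asymp \MESHRATIO(X_N) N^{1/d}$ well enough that $\BeKeNumIntErr{N}{s}$ cannot be "invisible'' in that range. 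Once this quantitative comparison is in hand, assembling the dyadic blocks with the geometric factors $2^{j\tau}$ up to $j \lesssim \log_2(\MESHRATIO(X_N) N^{1/d})$ yields the claimed bound with $c \geq 1$ depending only on $d,p,q,s,\tau$; I would also double-check the endpoint cases $p=1$ (so $q=\infty$) and $q=1$ separately, since there the Young's inequality step and the definition of the $\Lp{q}{d}$-norm of zonal kernels need the elementary two special cases \eqref{eq:Young.s.ineq.A} rather than the general form.
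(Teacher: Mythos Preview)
Your high-level strategy---split $\BeKeNumIntErr{N}{s}$ into a low-frequency polynomial part via a smooth filter at a dyadic cutoff, apply the classical Bernstein inequality to the polynomial part, and control the high-frequency tail by a geometric sum of filtered-kernel norms---is exactly the paper's approach. The triangle inequality, the use of Young's inequality on the polynomial block, and the recognition that a lower bound for $\|\BeKeNumIntErr{N}{s}\|_q$ is needed to absorb the tail term are all correct.

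However, several of the mechanisms you identify are off in ways that matter. First, the cutoff degree in the paper is $2^m \asymp N^{1/d}$, with \emph{no} mesh ratio in it; the mesh ratio does not enter by choosing a coarser scale. Second, and more importantly, the power $[\MESHRATIO(X_N)]^{d/p}$ does not come from ``$\Lp{q}{d}$ normalization of filtered kernels.'' In the paper the tail $\|\BeKeNumIntErr{N}{s}-\eta_m\ast\BeKeNumIntErr{N}{s}\|_{\sob{q}{\tau}{d}}$ is first bounded at the two endpoints: for $q=1$ one gets a bound with \emph{no} mesh-ratio factor directly from the $\mathbb{L}_1$-norm of filtered kernels, while for $q=\infty$ one uses a Marcinkiewicz--Zygmund inequality via the Voronoi decomposition of $X_N$, which produces $[\MESHRATIO(X_N)]^{d}$. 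The exponent $d/p$ then comes from Riesz--Thorin interpolation between these endpoints. Your sketch does not mention interpolation and instead tries to get the mesh-ratio power from a single-scale argument, which will not produce the correct $d/p$.

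Third, the lower bound for $\|\BeKeNumIntErr{N}{s}\|_q$ that you correctly flag as essential is \emph{not} a quasi-uniformity or sampling argument. The paper invokes the universal lower bound of Proposition~\ref{prop:generic.lower.bound} (Brandolini et al.): $\|\BeKeNumIntErr{N}{s}\|_q = \WCE(\numint[X_N];\sob{p}{s}{d}) \geq c_{p,s,d}' N^{-s/d}$ for \emph{any} $N$-point set. This is what allows the tail bound $c\,[\MESHRATIO(X_N)]^{d/p} N^{-(s-\tau)/d}$ to be divided by $\|\BeKeNumIntErr{N}{s}\|_q$ and converted into the desired factor $[\MESHRATIO(X_N)]^{d/p} N^{\tau/d}$. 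Your proposed route---arguing that $X_N$ ``resolves frequencies'' because it is quasi-uniform---would impose an unnecessary hypothesis and is not how the paper proceeds; indeed, the lemma as stated holds for arbitrary $X_N$, with the mesh ratio tracked explicitly rather than assumed bounded.
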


We will provide a proof of \eqref{eq:Bernstein.inequality} in Section~\ref{sec:Bernstein.inequality}.

\begin{remark*}
Mhaskar et al.~\cite[Theorem~6.1, p.~1669]{MhNaPrWa2010} prove \eqref{eq:Bernstein.inequality} for quasi-uniform $X_{N}$. Our estimate holds for general sequences $( X_N )$ but is specific to the kernel $\BeKeNumIntErr{N}{s}$.
An essential feature of \eqref{eq:Bernstein.inequality} is the explicit dependence on the mesh ratio of the point set. This is of importance for determining the stability and error estimates, and thus is of independent interest.
\end{remark*}

\begin{proof}[Proof of Theorem~\ref{thm:QMC.desing.property.B}]
First, we note that for fixed $\PT{x} \in \sph{d}$ the function ${\phi^{(s^\prime)}( \PT{y} ) \DEF \BeKe{s^\prime}(\PT{x}\cdot \PT{y})}$, $\PT{y} \in \sph{d}$, is in $\Lp{q}{d}$ for $s^\prime > d/p$ by Lemma~\ref{lem:q.norm.boundedness.Bessel.kernel}. Then the identity~\eqref{eq:BeKe.identity} (with $\alpha = s^\prime$ and $\beta = s - s^\prime$) gives
\begin{equation*}
\phi^{(s)}( \PT{y} ) = \int_{\sph{d}} \BeKe{s-s^\prime}( \PT{z} \cdot \PT{y} ) \, \phi^{(s^\prime)}( \PT{z} ) \, \dd \sigma_d( \PT{z} ), \qquad \PT{y} \in \sph{d}.
\end{equation*}
Consequently, $\BeKeNumIntErr{N}{s}$ given in \eqref{eq:SBF-Bessel} is the Bessel potential of $\BeKeNumIntErr{N}{s^\prime} \in \Lp{q}{d}$ in the sense of Proposition~\ref{prop:Bessel.potential}. Hence, by Theorem~\ref{thm:wce.for.Wp}, Proposition~\ref{prop:Bessel.potential} and Lemma~\ref{lem:Bernstein.inequality} (with $\tau = s - s^\prime$),
\begin{align}
\WCE(\numint[X_N]; \sob{p}{s^\prime}{d} ) = \left\| \BeKeNumIntErr{N}{s^\prime} \right\|_q
&= \left\| \BeKeNumIntErr{N}{s} \right\|_{\sob{q}{s-s^\prime}{d}} \\
&\leq c \left[ \MESHRATIO( X_N ) \right]^{d/p} N^{( s - s^\prime )/d} \left\| \BeKeNumIntErr{N}{s} \right\|_q \notag \\
&= c \left[ \MESHRATIO( X_N ) \right]^{d/p} N^{( s - s^\prime )/d} \, \WCE(\numint[X_N]; \sob{p}{s}{d} ), \notag
\end{align}
where the constant $c$ depends on $d$, $s$, $s^\prime$, $p$. This completes the proof.
\end{proof}

\section{Filtered Bessel kernel and proof of Lemma~\ref{lem:q.norm.boundedness.Bessel.kernel}}
\label{sec:auxiliary.results}

In this section we use a filtered Bessel kernel to prove Lemma~\ref{lem:q.norm.boundedness.Bessel.kernel}.
Let $d\geq1$, $s \in \R$. Given a filter $h$ (i.e., a smooth function on $\R_+$ with compact support), we define the \emph{filtered Bessel kernel}
\begin{equation} \label{eq:filtered.Bessel.kernel}
\filBeKe{h}{s}( T; \PT{x} \cdot \PT{y} ) \DEF \sum_{\ell = 0}^\infty h\big( \frac{\ell}{T} \big) \, b_{\ell}^{(-s)} \, Z(d,\ell) P^{(d)}_{\ell}(\PT{x} \cdot \PT{y}), \qquad T \geq1, \, \PT{x}, \PT{y} \in \sph{d}.
\end{equation}

In the special case $s = 0$, so $b_{\ell}^{(-s)} = 1$, the following results are known from \cite{NaPeWa2006-2}. 
The more general filtered Bessel kernel in \eqref{eq:filtered.Bessel.kernel} satisfies the following localization estimate. 


\begin{proposition}[{Localized upper bound; cf.~\cite[Lemma~2.8]{BCCGST2013}}]\label{lm:localised.UB}
Let $h$ be a filter with support~$[1/2,2]$. For every positive integer~$n$, there exists a constant $c_n > 0$ such that for every $T > 1$ and $s \geq 0$,
\begin{equation} \label{eq:filtered.Bessel.kernel.loc.estimate}
\left| \filBeKe{h}{s}( T; \PT{x} \cdot \PT{y} ) \right| \leq c_n \, \frac{T^{d-s}}{\left( 1 + T^2 \left| \PT{x} - \PT{y} \right|^2 \right)^{n/2}}, \qquad \PT{x}, \PT{y} \in \sph{d}.
\end{equation}
\end{proposition}

Note that the upper bound is a zonal function, since $| \PT{x} - \PT{y} |^2 = 2 - 2 \PT{x} \cdot \PT{y}$ for $\PT{x}, \PT{y} \in \sph{d}$. The localized upper bound gives the following estimate in which $s > d/p$ to ensure that we are dealing with continuous functions.

\begin{lemma}[$\Lp{q}{d}$-norm of filtered Bessel kernel] \label{lem:q.norm.filtered.Bessel.kernel}
Let $d \geq 1$, $1 \leq p, q \leq \infty$ with $1/p + 1/q = 1$ and $s > d/p$. Suppose $h$ is a filter with support $[1/2,2]$. Then there exists a constant $c > 0$ such that
\begin{equation} \label{eq:filtered.Bessel.kernel.q.norm.estimate}
\big\| \filBeKe{h}{s}( T; \cdot ) \big\|_{q,d} \leq c \, T^{d/p - s}, \qquad T \geq 1.
\end{equation}
The constant $c$ depends only on $h$, $d$, $s$ and $q$.
\end{lemma}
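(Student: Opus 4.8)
The plan is to estimate $\| \filBeKe{h}{s}( T; \cdot ) \|_{q,d}$ by integrating the $q$-th power of the pointwise localization bound from Proposition~\ref{lm:localised.UB} against the weight $(1-t^2)^{d/2-1}$ on $[-1,1]$, where $t = \PT{x} \cdot \PT{y}$. Writing $r = |\PT{x}-\PT{y}|$ with $r^2 = 2 - 2t$, and recalling that the surface measure of the geodesic circle at distance $\theta$ from a fixed pole is proportional to $(\sin\theta)^{d-1}\,\dd\theta$, we get for $1 \le q < \infty$
\begin{equation*}
\big\| \filBeKe{h}{s}( T; \cdot ) \big\|_{q,d}^q \leq c_n^q \, T^{q(d-s)} \, \frac{\omega_{d-1}}{\omega_d} \int_0^\pi \frac{(\sin\theta)^{d-1}}{\left( 1 + T^2 (2 - 2\cos\theta) \right)^{nq/2}} \, \dd\theta,
\end{equation*}
where $n$ is a positive integer at our disposal (to be chosen large enough at the end). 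The case $q = \infty$ follows a fortiori by bounding the supremum of the right-hand side of \eqref{eq:filtered.Bessel.kernel.loc.estimate} directly.

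Next I would substitute $u = T\theta$ (or equivalently use $2 - 2\cos\theta \asymp \theta^2$ for $\theta \in [0,\pi]$ and $\sin\theta \asymp \theta$ on $[0,\pi/2]$, with the range $[\pi/2,\pi]$ contributing only an exponentially-in-$T$ small amount since there $1 + T^2(2-2\cos\theta) \gtrsim T^2$). This gives, up to constants depending on $d$,
\begin{equation*}
\int_0^\pi \frac{(\sin\theta)^{d-1}}{\left( 1 + c\,T^2 \theta^2 \right)^{nq/2}} \, \dd\theta \;\lesssim\; T^{-d} \int_0^\infty \frac{u^{d-1}}{\left( 1 + c\,u^2 \right)^{nq/2}} \, \dd u,
\end{equation*}
and the remaining integral is a finite constant provided $nq > d$, i.e.\ $n > d/q = d - d/p \cdot \tfrac{q}{q}$; more simply, any $n$ with $n \ge d+1$ works for all $q \ge 1$. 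Combining, $\big\| \filBeKe{h}{s}( T; \cdot ) \big\|_{q,d}^q \lesssim T^{q(d-s)} T^{-d}$, so taking $q$-th roots yields $\big\| \filBeKe{h}{s}( T; \cdot ) \big\|_{q,d} \lesssim T^{d - s - d/q} = T^{d/p - s}$, using $1/p + 1/q = 1$. The constant depends only on $h$ (through $c_n$), $d$, $s$, and $q$, as claimed.

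The only mild subtlety — and the step I would be most careful about — is the change of variables near $\theta = 0$ and the uniformity in $T \ge 1$: one must check that the ``tail'' of the $u$-integral past $u = cT$ is harmless (it is, since the integrand is integrable at infinity once $nq > d$) and that the $\theta$-near-$\pi$ part genuinely contributes a lower-order term. Both are routine once $n$ is fixed large enough depending on $q$ and $d$; since $n$ enters only through the absolute constant $c_n$ in Proposition~\ref{lm:localised.UB}, we are free to take, say, $n = \lceil d \rceil + 1$, which suffices simultaneously for all $q \in [1,\infty]$, and absorb $c_n$ into $c$. No other obstacle arises.
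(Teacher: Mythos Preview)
Your proposal is correct and follows essentially the same opening move as the paper---both start from the pointwise localization bound of Proposition~\ref{lm:localised.UB} and integrate its $q$-th power---but the two diverge in how the resulting one-dimensional integral is estimated.

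The paper makes the substitution $2u = 1+t$, recognizes the integral as a Gauss hypergeometric function $\Hypergeom{2}{1}{qn/2,\,d/2}{d}{\frac{4T^2}{1+4T^2}}$, applies a linear transformation of ${}_2F_1$, and then uses monotonicity of the transformed series (which forces the slightly stronger condition $n > 2d/q$) to extract the factor $T^{-d/q}$. Your route is more elementary: you pass to geodesic polar coordinates, use $\sin\theta \asymp \theta$ and $2 - 2\cos\theta \asymp \theta^2$ near the pole, and rescale $u = T\theta$ to reduce to a fixed convergent integral $\int_0^\infty u^{d-1}(1+c\,u^2)^{-nq/2}\,\dd u$, needing only $n > d/q$. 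Your argument is shorter and avoids special-function identities; the paper's computation has the minor advantage of tracking the constant more explicitly. One small inaccuracy worth correcting: the contribution from $\theta \in [\pi/2,\pi]$ is not ``exponentially-in-$T$ small'' but merely of polynomial order $T^{-nq}$, which is still more than enough once $nq > d$.
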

\begin{proof}
First, let $1 \leq q < \infty$. Then
\begin{equation*}
\big\| \filBeKe{h}{s}( T; \cdot ) \big\|_{q,d}^q = \frac{\omega_{d-1}}{\omega_d} \int_{-1}^1 \big| \filBeKe{h}{s}( T; t ) \big|^q \left( 1 - t^2 \right)^{d/2-1} \dd t.
\end{equation*}
The change of variable $2u = 1 + t$ and the localized estimate~\eqref{eq:filtered.Bessel.kernel.loc.estimate} give
\begin{equation*}
\big\| \filBeKe{h}{s}( T; \cdot ) \big\|_{q,d}^q \leq G( T ) \DEF 2^{d-1} \frac{\omega_{d-1}}{\omega_d} \int_0^1 \frac{c_n^q \, T^{q ( d - s )}}{\left( 1 + 4 T^2 - 4 T^2 \, u \right)^{q n / 2}} \, u^{d/2-1} \left( 1 - u \right)^{d/2-1} \dd u
\end{equation*}
for $T > 1$ and positive integers $n$. Rewriting the integral as
\begin{equation*}
G( T ) = \left[ \frac{c_n \, T^{d-s}}{\left( 1 + 4 T^2 \right)^{n/2}} \right]^q 2^{d-1} \frac{\omega_{d-1}}{\omega_d} \int_0^1 \frac{u^{d/2-1} \left( 1 - u \right)^{d/2-1}}{\left( 1 - \frac{4T^2}{1 + 4 T^2} \, u \right)^{q n/2}} \, \dd u,
\end{equation*}
we express $G(T)$ in terms of a Gauss hypergeometric function (cf.~\cite[Eq.~15.6.1]{NIST:DLMF})
\begin{equation*}
G( T ) = \left[ \frac{c_n \, T^{d-s}}{\left( 1 + 4 T^2 \right)^{n/2}} \right]^q \Hypergeom{2}{1}{q n/2, d/2}{d}{\frac{4T^2}{1 + 4 T^2}}.
\end{equation*}
A linear transformation of hypergeometric functions~\cite[last of Eq.~15.8.1]{NIST:DLMF} yields
\begin{equation*}
G( T ) = \left[ \frac{c_n \, T^{d-s}}{\left( 1 + 4 T^2 \right)^{n/2}} \right]^q \left( \frac{1}{1 + 4 T^2} \right)^{d/2 - q n/2} \Hypergeom{2}{1}{d - q n/2, d/2}{d}{\frac{4T^2}{1 + 4 T^2}}.
\end{equation*}
Now choose $n$ to be a fixed integer satisfying $n > 2 d / q$. Because $d - q n / 2 < 0$, the hypergeometric function part is strictly monotonically decreasing on $[0,\infty)$ as a function of $T$. This can be seen from the integral representation (cf. \cite[Eq.~15.6.1]{NIST:DLMF}) of the hypergeometric function and the fact that $4 T^2 / ( 1 + 4 T^2 )$ is strictly increasing on $[0,\infty)$.
Then
\begin{equation*}
\Hypergeom{2}{1}{d - q n/2, d/2}{d}{\frac{4T^2}{1 + 4 T^2}}
\leq \Hypergeom{2}{1}{d - q n/2, d/2}{d}{0} = 1.
\end{equation*}
We arrive at
\begin{equation*} 
\big\| \filBeKe{h}{s}( T; \cdot ) \big\|_{q,d} \leq c_n \frac{T^{d-s}}{\left( 1 + 4 T^2 \right)^{d / ( 2 q )}} \leq \frac{c_n}{2^{d/q}} T^{d ( 1 - 1/q ) - s} \qquad \text{for $T \geq 1$.}
\end{equation*}
The result follows for $1 \leq q < \infty$.

Let $q = \infty$. Using the localized estimate~\eqref{eq:filtered.Bessel.kernel.loc.estimate}, we get for any fixed positive integer $n$,
\begin{equation*} 
\big\| \filBeKe{h}{s}( T; \cdot ) \big\|_{\infty,d} = \max_{-1 \leq t \leq 1} \left| \filBeKe{h}{s}( T; t ) \right| \leq \max_{-1 \leq t \leq 1} \frac{c_n \, T^{d-s}}{\left( 1 + 2 T^2 - 2 T^2 \, t \right)^{n/2}} = c_n \, T^{d-s}
\end{equation*}
for $T \geq 1$. This completes the proof.
\end{proof}

In order to show that the $\Lp{q}{d}$-norm of the zonal Bessel kernel is bounded, we now strengthen the requirement on the filter $h$ with support $[1/2,2]$ occurring in the filtered Bessel kernel \eqref{eq:filtered.Bessel.kernel}, by assuming that
\begin{equation}\label{eq:partition.of.unity.property-1}
h( 2 t ) + h( t ) = 1 \qquad \text{on $[1/2,1]$.}
\end{equation}
This condition is equivalent to saying that $h$ has the \emph{partition of unity property} (see~\cite{NaPeWa2006}), namely
\begin{equation} \label{eq:partition.of.unity.property}
\sum_{m=0}^\infty h\big( \frac{x}{2^m} \big) = 1 \qquad \text{for all $x \geq 1$.}
\end{equation}

\begin{proof}[Proof of Lemma~\ref{lem:q.norm.boundedness.Bessel.kernel}]
Let $h$ be a filter with support $[1/2,2]$ and the partition of unity property.
Then using \eqref{eq:filtered.Bessel.kernel} and \eqref{eq:partition.of.unity.property}, we get

\begin{equation*}
\sum_{m=1}^\infty \filBeKe{h}{s}( 2^{m-1}; t ) = \sum_{\ell = 1}^\infty \left( \sum_{m=1}^\infty h\big( \frac{\ell}{2^{m-1}} \big) \right) \, b_{\ell}^{(-s)} \, Z(d,\ell) P^{(d)}_{\ell}( t ) = \BeKe{s}( t ) - 1, \quad -1 \leq t \leq 1.
\end{equation*}
Then for $s > d/p$ the triangle inequality and the filtered Bessel kernel estimate~\eqref{eq:filtered.Bessel.kernel.q.norm.estimate} yield
\begin{equation*}
\Big\| \BeKe{s} - 1 \Big\|_{q,d} = \Big\| \sum_{m=1}^\infty \filBeKe{h}{s}( 2^{m-1}; \cdot ) \Big\|_{q,d} \leq \sum_{m=0}^\infty \Big\| \filBeKe{h}{s}( 2^{m}; \cdot ) \Big\|_{q,d} \leq c \sum_{m = 0}^\infty \left( 2^m \right)^{d/p-s} =  \frac{c}{1 - 2^{d/p-s}},
\end{equation*}
where $c$ is the constant in Lemma~\ref{lem:q.norm.filtered.Bessel.kernel}. On observing that $\| 1 \|_{q,d} = 1$, we get
\begin{equation*}
\big\| \BeKe{s} \big\|_{q,d} \leq \Big\| \BeKe{s} - 1 \Big\|_{q,d} + \| 1 \|_{q,d} \leq 1 + \frac{c}{1 - 2^{d/p-s}}.
\end{equation*}
This completes the proof.
\end{proof}

\section{Proof of Lemma~\ref{lem:Bernstein.inequality}}
\label{sec:Bernstein.inequality}

In this section, we prove the Bernstein type inequality \eqref{eq:Bernstein.inequality} for the function
\begin{equation*}
\BeKeNumIntErr{N}{s}( \PT{y} ) = \frac{1}{N} \sum_{j=1}^{N} \BeKeMod{s}( \PT{x}_{j} \cdot \PT{y} ), \qquad \PT{y} \in \sph{d},
\end{equation*}
which is central to the computation of the worst-case error (see Theorem~\ref{thm:wce.for.Wp}).

In order to establish this result, we make use of the well-known Bernstein inequality for spherical polynomials (see \cite[Proposition~4.3]{LeMh2006} and \cite[Theorem~2]{Ru1992}).

\begin{proposition} \label{prop:Bernstein.inequality.for.polynomials}
For $d \geq 1$, $1 \leq q \leq \infty$ and $\tau \geq 0$, there holds 
\begin{equation} \label{eq:Bernstein.inequality.polynomial}
\left\| P \right\|_{\sob{q}{\tau}{d}} \leq c_{q,\tau,d} \, n^\tau \left\| P \right\|_{q}, \qquad P \in \Pi_n^d,
\end{equation}
where $\Pi_n^d$ denotes the family of spherical polynomials on $\sph{d}$ of degree at most $n$.
\end{proposition}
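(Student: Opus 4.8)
The plan is to realize the Bessel operator $\BeOp{-\tau} = (1-\LBo{d})^{\tau/2}$, restricted to the finite-dimensional space $\Pi_n^d$, as convolution against a zonal kernel $K_n$ with $\|K_n\|_{1,d}\leq c\,n^\tau$, and then to apply Young's inequality~\eqref{eq:Young.s.ineq.A}. We may assume $\tau>0$, the case $\tau=0$ being trivial. For $P\in\Pi_n^d$ one has $\|P\|_{\sob{q}{\tau}{d}}=\|\BeOp{-\tau}[P]\|_q$ and $\BeOp{-\tau}[P]=\sum_{\ell=0}^n(1+\lambda_\ell)^{\tau/2}\,Y_\ell[P]$, itself a spherical polynomial of degree at most $n$. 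So it suffices to exhibit a zonal kernel $K_n$ whose symbol (the coefficient of $Z(d,\ell)P_\ell^{(d)}$) equals $(1+\lambda_\ell)^{\tau/2}$ for $0\leq\ell\leq n$: since $Y_\ell[P]=0$ for $\ell>n$ this gives $K_n\ast P=\BeOp{-\tau}[P]$ on $\Pi_n^d$, and Young's inequality then yields $\|P\|_{\sob{q}{\tau}{d}}\leq\|K_n\|_{1,d}\,\|P\|_q$.

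Fix a filter $h$ with support $[1/2,2]$ enjoying the partition of unity property~\eqref{eq:partition.of.unity.property}, let $M$ be the least integer with $2^M\geq 2n$, and set
\begin{equation*}
K_n(\PT{x}\cdot\PT{y}) \DEF Z(d,0)\,P_0^{(d)}(\PT{x}\cdot\PT{y}) + \sum_{m=0}^{M} \filBeKe{h}{-\tau}(2^m;\PT{x}\cdot\PT{y}),
\end{equation*}
where $\filBeKe{h}{-\tau}$ is the filtered Bessel kernel~\eqref{eq:filtered.Bessel.kernel} with $s=-\tau$, so that its symbol at level $\ell$ is $h(\ell/2^m)(1+\lambda_\ell)^{\tau/2}$. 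Since $\supp h\subset[1/2,2]$, each of these summands has vanishing $\ell=0$ component; and for $1\leq\ell\leq n$ the choice of $M$ forces $\sum_{m=0}^M h(\ell/2^m)=\sum_{m=0}^\infty h(\ell/2^m)=1$. Hence $K_n$ has the required symbol on $0\leq\ell\leq n$ (its behaviour for $\ell>n$ being irrelevant). In effect, $K_n$ is $\BeOp{-\tau}$ applied to a generalized de la Vallée Poussin kernel.

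It remains to bound $\|K_n\|_{1,d}$. From the localized estimate of Proposition~\ref{lm:localised.UB} with $s=-\tau$, for each positive integer $\kappa$ there is a constant $c_\kappa$ with $|\filBeKe{h}{-\tau}(T;\PT{x}\cdot\PT{y})|\leq c_\kappa\,T^{d+\tau}(1+T^2|\PT{x}-\PT{y}|^2)^{-\kappa/2}$. Taking $\kappa>2d$ and integrating this bound against $(1-t^2)^{d/2-1}$ in the variable $t=\PT{x}\cdot\PT{y}$ — the same computation as in the proof of Lemma~\ref{lem:q.norm.filtered.Bessel.kernel}, which is valid for every $s\in\R$ — gives $\|\filBeKe{h}{-\tau}(T;\cdot)\|_{1,d}\leq c\,T^\tau$ for all $T\geq1$. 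Since $\|Z(d,0)P_0^{(d)}\|_{1,d}=\|1\|_{1,d}=1$ and $2^M<4n$, the triangle inequality together with the geometric series (dominated by its top term $\asymp n^\tau$ because $\tau>0$) yields $\|K_n\|_{1,d}\leq 1+c\sum_{m=0}^M(2^m)^\tau\leq c'\,n^\tau$, which proves Proposition~\ref{prop:Bernstein.inequality.for.polynomials}.

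The heart of the matter is the uniform estimate $\|K_n\|_{1,d}\leq c\,n^\tau$, which rests entirely on the pointwise localization of the filtered Bessel kernels (Proposition~\ref{lm:localised.UB}); the reproducing identity on $\Pi_n^d$, the geometric summation, and the reduction to $\tau>0$ are routine. A feature of routing the proof through an $\mathbb{L}_1$ kernel bound and Young's inequality — rather than, say, a Mikhlin--Hörmander multiplier theorem — is that the inequality is obtained simultaneously for all $1\leq q\leq\infty$, including the endpoints $q\in\{1,\infty\}$ where such multiplier theorems typically fail; and it is precisely the positivity of $\tau$ that makes the dyadic sum collapse to order $n^\tau$.
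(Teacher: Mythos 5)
The paper itself offers no proof of Proposition~\ref{prop:Bernstein.inequality.for.polynomials}; it simply cites \cite[Proposition~4.3]{LeMh2006} and \cite[Theorem~2]{Ru1992}. Your argument is a correct and self-contained alternative, and it has the attractive feature of being built entirely out of the paper's own filtered-Bessel-kernel machinery (Proposition~\ref{lm:localised.UB}, Lemma~\ref{lem:q.norm.filtered.Bessel.kernel}). The one point worth scrutinizing is your use of those two results with the \emph{negative} index $s=-\tau$. Proposition~\ref{lm:localised.UB} is stated without restriction on $s$ and the underlying localization argument (summation by parts against a compactly supported smooth filter) does not use a sign condition; equivalently, on the support $\ell\asymp T$ of $h(\cdot/T)$ one has $(1+\lambda_\ell)^{\tau/2}\asymp T^\tau$, so $\filBeKe{h}{-\tau}(T;\cdot)=T^\tau\,\filBeKe{\widetilde h_T}{0}(T;\cdot)$ for a family of filters $\widetilde h_T$ with uniform derivative bounds, which reduces your case to $s=0$. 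Likewise, the hypergeometric computation in Lemma~\ref{lem:q.norm.filtered.Bessel.kernel} never invokes $s>d/p$ (that hypothesis is there only so the exponent $d/p-s$ is negative and the dyadic series in Lemma~\ref{lem:q.norm.boundedness.Bessel.kernel} converges), so $\|\filBeKe{h}{-\tau}(T;\cdot)\|_{1,d}\leq cT^\tau$ is indeed legitimate. You correctly compensate for the now-divergent dyadic sum by truncating at $M$ with $2n\leq 2^M<4n$, where the de la Vall\'{e}e Poussin-type reproduction $K_n\ast P=\BeOp{-\tau}[P]$ holds on $\Pi_n^d$, and the positivity of $\tau$ makes the finite geometric sum collapse to $\mathcal{O}(n^\tau)$. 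Combined with Young's inequality $\|K_n\ast P\|_q\leq\|K_n\|_{1,d}\|P\|_q$, which is available for all $1\leq q\leq\infty$ including the endpoints, this gives the stated Bernstein inequality uniformly in $q$. In short: the proof is sound, and your closing observation about why routing through an $\mathbb{L}_1$ kernel bound (rather than a multiplier theorem) captures the endpoint cases is exactly the right thing to emphasize.
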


We follow the general approach of \cite{MhNaPrWa2010}, but with the crucial difference that we are able to replace a positive-definite assumption in  \cite{MhNaPrWa2010} by the precise lower bound
\begin{equation} \label{eq:auxiliary.estimates}
\left\| \BeKeNumIntErr{N}{s} \right\|_q = \WCE(\numint[X_N]; \sob{p}{s}{d} ) \geq \frac{c_{p,s,d}^\prime}{N^{s/d}} > 0, \qquad s > \frac{d}{p}, \quad \frac{1}{p} + \frac{1}{q} = 1,
\end{equation}
which follows from Theorem~\ref{thm:wce.for.Wp} and the lower bound of Proposition~\ref{prop:generic.lower.bound}.
Our strategy is to approximate $\BeKeNumIntErr{N}{s}$ by spherical polynomials on $\sph{d}$ of degree~$2^m \asymp N^{1/d}$ that are convolution approximations of~$\BeKeNumIntErr{N}{s}$ with filtered Bessel kernels. For a smooth filter $\widetilde{h}$ with support $[0, 2]$, to be specified below, we define (see \eqref{eq:filtered.Bessel.kernel} and \eqref{eq:coeff.Bessel})
\begin{equation*}
\eta_0 \equiv 1, \qquad \eta_m \DEF \eta_{m,\widetilde{h}} \DEF \filBeKe{\widetilde{h}}{0}( 2^{m-1}; \cdot ), \quad m \geq 1.
\end{equation*}
Then it can be readily seen that $\eta_m \ast \BeKeNumIntErr{N}{s}$ is a spherical polynomial of degree $2^m - 1$.
By \eqref{eq:BeKe.identity}, $\BeKeNumIntErr{N}{s}$ is the Bessel potential of $\BeKeNumIntErr{N}{s-\tau} \in \Lp{q}{d}$ for $s - d/p > \tau > 0$, so $\BeKeNumIntErr{N}{s} \in \sob{q}{\tau}{d}$. The triangle inequality then gives
\begin{equation} \label{eq:strategy.inequality}
\left\| \BeKeNumIntErr{N}{s} \right\|_{\sob{q}{\tau}{d}} \leq \left\| \eta_m \ast \BeKeNumIntErr{N}{s} \right\|_{\sob{q}{\tau}{d}} + \left\| \BeKeNumIntErr{N}{s} - \eta_m \ast \BeKeNumIntErr{N}{s} \right\|_{\sob{q}{\tau}{d}}.
\end{equation}
From \eqref{eq:Bernstein.inequality.polynomial} and \eqref{eq:Young.s.ineq.A} we deduce the following bound for the polynomial part:
\begin{equation} \label{eq:strategy.inequality.polynomial.part}
\left\| \eta_m \ast \BeKeNumIntErr{N}{s} \right\|_{\sob{q}{\tau}{d}} \leq  c_{q,\tau,d} \, 2^{m \tau} \left\| \eta_m \ast \BeKeNumIntErr{N}{s} \right\|_q \leq  c_{q,\tau,d} \, 2^{m \tau}  \left\| \eta_m \right\|_{1,d} \left\| \BeKeNumIntErr{N}{s} \right\|_q.
\end{equation}

The challenging part is to control the error of approximation $\| \BeKeNumIntErr{N}{s} - \eta_m \ast \BeKeNumIntErr{N}{s} \|_{\sob{q}{\tau}{d}}$.
For this purpose we decompose the convolution of $\eta_m$ against a function $f \in \Lp{q}{d}$,
\begin{equation} \label{eq:master.identity}
\eta_m \ast f = \sum_{k=0}^m \filterpsi{k}{} \ast f, \qquad \filterpsi{0}{} \equiv 1, \quad \filterpsi{k}{} \DEF \filterpsi{k,h}{} \DEF \filBeKe{h}{0}( 2^{k-1}; \cdot ), \quad k \geq 1,
\end{equation}
where $h$ is a filter with support $[1/2,2]$ and range $[0,1]$ that also has the property \eqref{eq:partition.of.unity.property-1}.
We now specify $\widetilde{h}$ in terms of $h$ as follows\footnote{The partition of unity property implies smoothness at the transition point $t = 1$. The requirement that $\widetilde{h}$ is $1$ on $[0,1]$ implies that convolution with $\eta_m$ reproduces a spherical polynomial of degree $\leq 2^{m-1}$.}
\begin{equation*}
\widetilde{h}( t ) \DEF \begin{cases} 1 & \text{if $t \in [0,1]$,} \\ h( t ) & \text{if $t \geq 1$.} \end{cases}
\end{equation*}
Then it can be readily verified that
\begin{equation*}
\widetilde{h}\big( \frac{\ell}{2^{m-1}} \big) = \sum_{k=1}^m h\big( \frac{\ell}{2^{k-1}} \big), \qquad \ell, m \geq 1,
\end{equation*}
which in turn implies \eqref{eq:master.identity}.
Furthermore, we note that by \cite[Lemma~2.11]{NaPeWa2006} 
\begin{equation} \label{eq:decomposition}
\sum_{k=0}^m \filterpsi{k}{} \ast f \to f \qquad \text{as $m \to \infty$ in $\Lp{q}{d}$.}
\end{equation}

Now, let $s - d / p > \tau > 0$ and for $\PT{x} \in \sph{d}$ set $\phi^{(s)}( \PT{y} ) \DEF \BeKeMod{s}( \PT{x} \cdot \PT{y} )$. First, observe that
\begin{equation*}
\BeOp{-\tau}[ \phi^{(s)} ] = \phi^{(s-\tau)}, \qquad \BeOp{-\tau}[ \eta_m \ast \phi^{(s)} ] = \eta_m \ast \BeOp{-\tau}[ \phi^{(s)} ] = \eta_m \ast \phi^{(s-\tau)}.
\end{equation*}
By linearity, these relations also hold for $\BeKeNumIntErr{N}{s}$. Hence, by Proposition~\ref{prop:W.p.s.characterization},
\begin{equation*}
\left\| \BeKeNumIntErr{N}{s} - \eta_m \ast \BeKeNumIntErr{N}{s} \right\|_{\sob{q}{\tau}{d}} = \left\| \BeOp{-\tau}[ \BeKeNumIntErr{N}{s} ] - \BeOp{-\tau}[ \eta_m \ast \BeKeNumIntErr{N}{s} ] \right\|_{q} = \left\| \BeKeNumIntErr{N}{s-\tau} - \eta_m \ast \BeKeNumIntErr{N}{s-\tau} \right\|_{q}.
\end{equation*}
Application of the decomposition relations~\eqref{eq:master.identity} and \eqref{eq:decomposition} and the triangle inequality gives
\begin{equation} \label{eq:Bernstein.estimate.PRE}
\left\| \BeKeNumIntErr{N}{s} - \eta_m \ast \BeKeNumIntErr{N}{s} \right\|_{\sob{q}{\tau}{d}} = \left\| \sum_{k=m+1}^\infty \filterpsi{k}{} \ast \BeKeNumIntErr{N}{s-\tau} \right\|_{q} \leq \sum_{k=m+1}^\infty \left\| \filterpsi{k}{} \ast \BeKeNumIntErr{N}{s-\tau} \right\|_{q}.
\end{equation}
Defining
\begin{equation} \label{eq:filter.phi.s}
\filterpsi{k}{(s)} \DEF \filterpsi{k}{} \ast \phi^{(s)} = \filBeKe{h}{s}( 2^{k-1}; \cdot ), \qquad k \geq 1,
\end{equation}
we deduce
\begin{equation*}
\left\| \filterpsi{k}{} \ast \BeKeNumIntErr{N}{s-\tau} \right\|_{q} = \left\| \frac{1}{N} \sum_{j=1}^N \filterpsi{k}{(s-\tau)}( \PT{x}_j \cdot \cdot ) \right\|_{q} \leq \frac{1}{N} \sum_{j=1}^N \left\| \filterpsi{k}{(s-\tau)}( \PT{x}_j \cdot \cdot ) \right\|_{q} = \left\| \filBeKe{h}{s-\tau}( 2^{k-1}; \cdot ) \right\|_{q,d}.
\end{equation*}
Lemma~\ref{lem:q.norm.filtered.Bessel.kernel} then yields (since $s - \tau > d/p$ and $2^m \asymp N^{1/d}$)
\begin{equation}
\begin{split} \label{eq:Bernstein.estimat.q.EQ.1}
\left\| \BeKeNumIntErr{N}{s} - \eta_m \ast \BeKeNumIntErr{N}{s} \right\|_{\sob{q}{\tau}{d}}
&\leq c^\prime \, \sum_{k=m+1}^\infty ( 2^k )^{d/p-(s - \tau)} \\
&\leq c^{\prime\prime} \, (2^m)^{d/p - (s-\tau)} \leq c^{\prime\prime\prime} \, N^{1/p-(s-\tau) / d}.
\end{split}
\end{equation}
The upper bound in \eqref{eq:Bernstein.estimat.q.EQ.1} is not strong enough to give the result in Lemma~\ref{lem:Bernstein.inequality} except in the case $q = 1$.
The following result will enable us to settle the other extremal case $q = \infty$; however, it requires geometric information about the point set.

\begin{lemma} \label{lem:representer.estimate}
Let $s^{\prime} > d \geq 1$. Then there is a constant $c$ such that for every point set $X_N = \{ \PT{x}_1, \dots, \PT{x}_N \} \subset \sph{d}$,
\begin{equation*}
\left| \frac{1}{N} \sum_{j=1}^N \filterpsi{k}{(s^{\prime})}( \PT{x}_j \cdot \PT{y} ) \right| \leq c \left[ \MESHRATIO( X_N ) \right]^d N^{-1} \, 2^{-k ( s^{\prime} - d )}, \qquad \PT{y} \in \sph{d}; \, k = m, m + 1, m + 2, \dots,
\end{equation*}
where $m \DEF \lfloor \frac{1}{d} \log_2 N \rfloor$ and $\filterpsi{k}{(s^{\prime})}$ is given in \eqref{eq:filter.phi.s}.
\end{lemma}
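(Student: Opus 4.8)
The plan is to estimate the sum pointwise in $\PT{y}$, combining the localized upper bound for filtered Bessel kernels with the separation of the point set. Since $\filterpsi{k}{(s^\prime)} = \filBeKe{h}{s^\prime}( 2^{k-1}; \cdot )$ and the filter $h$ has support $[1/2,2]$, Proposition~\ref{lm:localised.UB}, applied with $T := 2^{k-1}$ and a fixed integer $n > d$, gives
\[
| \filterpsi{k}{(s^\prime)}( \PT{x}_j \cdot \PT{y} ) | \leq c_n \, \frac{T^{\, d - s^\prime}}{( 1 + T^2 \, | \PT{x}_j - \PT{y} |^2 )^{n/2}}, \qquad j = 1, \ldots, N, \ \PT{y} \in \sph{d}.
\]
By the triangle inequality it then suffices to bound $\frac{1}{N}\,T^{\, d-s^\prime} \sum_{j=1}^N ( 1 + T^2 | \PT{x}_j - \PT{y} |^2 )^{-n/2}$ by $c\,[\MESHRATIO(X_N)]^d\,N^{-1}\,2^{-k(s^\prime-d)}$.

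The main work is a separation-based counting estimate. Writing $\delta := \SEP( X_N )$, I would show that
\[
\sum_{j=1}^N ( 1 + T^2 | \PT{x}_j - \PT{y} |^2 )^{-n/2} \leq c_d \, ( 1 + ( T \delta )^{-d} ), \qquad \PT{y} \in \sph{d},
\]
by the usual dyadic argument: since $| \PT{x}_j - \PT{y} | \asymp \arccos( \PT{x}_j \cdot \PT{y} )$ and the geodesic caps of radius $\delta/2$ centered at the $\PT{x}_j$ are pairwise disjoint, a cap of radius $R$ about $\PT{y}$ contains $\mathcal{O}_d( 1 + ( R/\delta )^d )$ of the $\PT{x}_j$. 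Splitting $\sph{d}$ into $\{ \arccos(\PT{x}_j\cdot\PT{y}) \leq 1/T \}$, on which the summand is $\leq 1$, and the annuli $\{ 2^i/T < \arccos(\PT{x}_j\cdot\PT{y}) \leq 2^{i+1}/T \}$, $i \geq 0$, on which it is $\leq 2^{-in}$, then using the counting bound and summing over $i$ (the resulting geometric series converges because $n > d$) yields the estimate.

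To conclude, I would insert the geometry of $X_N$ through $\delta = \COV(X_N)/\MESHRATIO(X_N)$. Because $k \geq m = \lfloor \tfrac{1}{d}\log_2 N \rfloor$, we have $T = 2^{k-1} \geq 2^{m-1} > \tfrac14 N^{1/d}$, and then the trivial covering lower bound $\COV(X_N) \geq c_d N^{-1/d}$ from the introduction gives $T\,\COV(X_N) > c_d/4$, so that
\[
( T \delta )^{-d} = [ \MESHRATIO(X_N) ]^d \, ( T\,\COV(X_N) )^{-d} \leq ( 4/c_d )^d \, [ \MESHRATIO(X_N) ]^d .
\]
Together with $\MESHRATIO(X_N) \geq 1/2$, which holds for every point set (the midpoint of a shortest arc between two nearest points lies at distance $\geq \SEP(X_N)/2$ from all of $X_N$), this gives $1 + (T\delta)^{-d} \leq c_d^\prime \, [\MESHRATIO(X_N)]^d$. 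Since $T^{\, d-s^\prime} = 2^{\, s^\prime-d}\,2^{-k(s^\prime-d)}$, assembling the three displays yields the claimed inequality with $c$ depending only on $d$ and $s^\prime$. The finitely many pairs $(N,k)$ with $2^{k-1} \leq 1$ force $N$ to be bounded; there the left side is bounded while the right side is bounded below by a positive constant, so the inequality again holds after enlarging $c$.

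I expect the counting estimate of the second step to be the crux: it must be uniform in $\PT{y}$ and $T$ and carry exactly the power $(T\delta)^{-d}$, so that the mesh ratio $[\MESHRATIO(X_N)]^d = [\COV(X_N)/\SEP(X_N)]^d$ surfaces cleanly in the final step. The remaining ingredients — the localized kernel bound, the identity $\SEP = \COV/\MESHRATIO$, the bound $\COV(X_N) \geq c_d N^{-1/d}$, and $\MESHRATIO(X_N) \geq 1/2$ — are routine.
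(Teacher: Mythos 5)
Your proof is correct, but it takes a genuinely different route from the paper's. The paper argues via a Voronoi-cell decomposition of $\sph{d}$ together with the Marcinkiewicz--Zygmund type inequality of \cite{MhNaPrWa2010}, which compares the weighted discrete sum $\sum_j \sigma_d(R_j)\,|\filterpsi{k}{(s^\prime)}(\PT{x}_j\cdot\PT{y})|$ to the continuous norm $\|\filterpsi{k}{(s^\prime)}\|_{1,d}$, with error controlled by $[2^k\COV(X_N)]^d$ times the best $\Lp{1}{d}$-approximation error $\Eb{2^{k-1}}{\BeKeMod{s^\prime}}{}$; both of the latter quantities are then estimated via Lemma~\ref{lem:q.norm.filtered.Bessel.kernel}. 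You bypass the Marcinkiewicz--Zygmund machinery entirely: you apply the localized pointwise bound of Proposition~\ref{lm:localised.UB} to each summand and control the resulting sum of decaying tails by a separation-based dyadic counting estimate, $\sum_{j}\bigl(1+T^2|\PT{x}_j-\PT{y}|^2\bigr)^{-n/2}\le c_d\bigl(1+(T\,\SEP(X_N))^{-d}\bigr)$ for a fixed $n>d$. Both routes ultimately rest on kernel localization and on the same three geometric facts ($\SEP=\COV/\MESHRATIO$, $\COV(X_N)\ge c_d N^{-1/d}$, and $2^{k}\ge 2^m\gtrsim N^{1/d}$), and both deliver the mesh ratio to the power $d$ with the same bookkeeping. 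Your version is more elementary and self-contained --- and the observation $\MESHRATIO(X_N)\ge 1/2$, used to absorb the additive constant $1$ into $[\MESHRATIO(X_N)]^d$, is a clean touch --- while the paper's version is more modular, delegating the separation-counting work to the cited Marcinkiewicz--Zygmund result.
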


\begin{proof}
The point set $X_N$ uniquely determines a Voronoi cell decomposition $\{ R_1, \dots, R_N \}$ of $\sph{d}$ with $\PT{x}_j \in R_j$. It has the property that $\min_{1 \leq j \leq N} \sigma_d( R_j ) \geq \beta_d \, [ \SEP( X_N ) ]^d$ for some constant $\beta_d$ depending only on $d$. Utilizing a Marcinkiewicz-Zygmund type inequality from \cite[Corollary~4.6]{MhNaPrWa2010},
\begin{equation*}
\left| \left\| \filterpsi{k}{(s^{\prime})} \right\|_{1,d} - \sum_{j=1}^N \sigma_d( R_j ) \left| \filterpsi{k}{(s^{\prime})}( \PT{x}_j \cdot \PT{y} ) \right| \right| \leq c^\prime \left[ 2^k \COV( X_N ) \right]^d \Eb{2^{k-1}}{\BeKeMod{s^{\prime}}}{}, \qquad \PT{y} \in \sph{d},
\end{equation*}
where $\Eb{n}{f}{} \DEF \inf_{P \in \Pi_n^d} \left\| f - P \right\|_1$ is the \emph{error of best $\Lp{1}{d}$-approximation} by spherical polynomials on $\sph{d}$ of degree at most $n$, we obtain
\begin{align*}
\left| \sum_{j=1}^N \filterpsi{k}{(s^{\prime})}( \PT{x}_j \cdot \PT{y} ) \right|
&\leq \frac{1}{\min_{1 \leq j \leq N} \sigma_d( R_j )} \sum_{j=1}^N \sigma_d( R_j ) \left| \filterpsi{k}{(s^{\prime})}( \PT{x}_j \cdot \PT{y} ) \right| \\
&\leq \frac{1}{\min_{1 \leq j \leq N} \sigma_d( R_j )} \left( \left\| \filterpsi{k}{(s^{\prime})} \right\|_{1,d} + c^\prime \left[ 2^k \COV( X_N ) \right]^d \Eb{2^{k-1}}{\BeKeMod{s^{\prime}}}{} \right).
\end{align*}
Now, by Lemma~\ref{lem:q.norm.filtered.Bessel.kernel},
\begin{equation*}
\left\| \filterpsi{k}{(s^{\prime})} \right\|_{1,d} = \left\| \filBeKe{h}{s^{\prime}}( 2^{k-1}; \cdot ) \right\|_{1,d} \leq c^{\prime\prime} \, 2^{-k s^{\prime}}
\end{equation*}
and proceeding similarly to the derivation of \eqref{eq:Bernstein.estimat.q.EQ.1}, we get
\begin{equation*}
\Eb{2^{k-1}}{\BeKeMod{s^{\prime}}}{} \leq \left\| \BeKeMod{s^{\prime}} - \eta_{k-1} \ast \BeKeMod{s^{\prime}} \right\|_{1,d} \leq c^{\prime\prime\prime} \, 2^{-k s^{\prime}}.
\end{equation*}
Hence
\begin{align*}
\left| \sum_{j=1}^N \filterpsi{k}{(s^{\prime})}( \PT{x}_j \cdot \PT{y} ) \right|
&\leq \frac{c^{\prime\prime} \, 2^{-k s^{\prime}} + c^\prime \left[ 2^k \COV( X_N ) \right]^d c^{\prime\prime\prime} \, 2^{-k s^{\prime}}}{\beta_d \, [ \SEP( X_N ) ]^d} \\
&= \frac{1}{\beta_d} \left[ \frac{\COV( X_N )}{\SEP( X_N )} \right]^d \left(  \frac{c^{\prime\prime}}{\left[ 2^k \COV( X_N ) \right]^d} + c^\prime \, c^{\prime\prime\prime} \right) 2^{-k( s^{\prime} - d )}.
\end{align*}
The parenthetical expression is bounded because (recalling $k \geq m$) $2^{-k} \leq 2^{-m} \asymp N^{-1/d} \leq c^{iv} \COV( X_N )$.
\end{proof}

\begin{lemma} \label{lem:filtered.convolution.approx.error} Let $d\geq1$, $1 \leq p,q \leq \infty$ with $1/p + 1/q = 1$ and $s - d / p > \tau > 0$, and $X_N$ an $N$-point set on~$\sph{d}$. Let $m = \lfloor \frac{1}{d} \log_2 N \rfloor$. Then we have
\begin{equation}\label{eq:fil approx phiN-1}
\left\| \BeKeNumIntErr{N}{s}-\eta_{m}\ast \BeKeNumIntErr{N}{s} \right\|_{\sob{q}{\tau}{d}}\leq c\: \left[ \MESHRATIO( X_N ) \right]^{d/p} N^{- (s-\tau)/d},
\end{equation}
where the constant $c$ depends only on $d$, $p$, $s$ and $\tau$.
\end{lemma}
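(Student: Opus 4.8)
The plan is to start from \eqref{eq:Bernstein.estimate.PRE}, which under the present hypotheses gives
\begin{equation*}
\left\| \BeKeNumIntErr{N}{s}-\eta_{m}\ast \BeKeNumIntErr{N}{s} \right\|_{\sob{q}{\tau}{d}} \;\leq\; \sum_{k=m+1}^{\infty} \left\| \filterpsi{k}{}\ast \BeKeNumIntErr{N}{s-\tau} \right\|_{q},
\end{equation*}
and to estimate each summand. For $\sigma>0$ abbreviate $\Psi_{k}^{(\sigma)}\DEF\filterpsi{k}{}\ast\BeKeNumIntErr{N}{\sigma}=\frac{1}{N}\sum_{j=1}^{N}\filBeKe{h}{\sigma}(2^{k-1};\PT{x}_{j}\cdot\,\cdot\,)$, a spherical polynomial of degree at most $2^{k}-1$. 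I would prove the per-term estimate
\begin{equation*}
\left\| \Psi_{k}^{(\sigma)} \right\|_{q} \;\leq\; c\,[\MESHRATIO(X_N)]^{d/p}\,N^{-1/p}\,2^{-k(\sigma-d/p)} \qquad \text{for all $\sigma>d/p$ and all integers $k\geq m$,}
\end{equation*}
with $c$ depending only on $d,p,s,\tau$. Applying this with $\sigma=s-\tau$ (admissible since $s-\tau>d/p$ by hypothesis) and summing the geometric series --- which converges because $s-\tau-d/p>0$ --- then finishes the proof: with $2^{-m}\leq 2\,N^{-1/d}$ one obtains $N^{-1/p}2^{-m(s-\tau-d/p)}\leq c\,N^{-1/p}N^{-(s-\tau-d/p)/d}=c\,N^{-(s-\tau)/d}$, which is precisely \eqref{eq:fil approx phiN-1}.

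The obstacle to obtaining the per-term estimate by a direct $\Lp{1}{d}$--$\Lp{\infty}{d}$ interpolation of $\|\Psi_{k}^{(\sigma)}\|_{q}$ is that the pointwise bound of Lemma~\ref{lem:representer.estimate} is only available when the smoothness index exceeds $d$, whereas the hypothesis merely gives $s-\tau>d/p$, which can be at most $d$ when $p>1$. I would circumvent this by raising the smoothness by $d$ and descending again with the polynomial Bernstein inequality. Using the Bessel operator calculus --- $\BeOp{-d}$ commutes with convolution by $\filterpsi{k}{}$, and $\BeOp{-d}[\BeKeNumIntErr{N}{\sigma+d}]=\BeKeNumIntErr{N}{\sigma}$, cf.~\eqref{eq:Bessel.op.properties} and the identities recorded at the start of Section~\ref{sec:Bernstein.inequality} --- one has $\Psi_{k}^{(\sigma)}=\BeOp{-d}\Psi_{k}^{(\sigma+d)}$, so Propositions~\ref{prop:W.p.s.characterization} and~\ref{prop:Bernstein.inequality.for.polynomials} give
\begin{equation*}
\left\| \Psi_{k}^{(\sigma)} \right\|_{q}=\left\| \Psi_{k}^{(\sigma+d)} \right\|_{\sob{q}{d}{d}}\leq c\,2^{kd}\left\| \Psi_{k}^{(\sigma+d)} \right\|_{q}.
\end{equation*}

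Since $\sigma+d>d$, the polynomial $\Psi_{k}^{(\sigma+d)}$ is now in range of both auxiliary lemmas. The triangle inequality for the average $\frac{1}{N}\sum_{j}$ together with Lemma~\ref{lem:q.norm.filtered.Bessel.kernel} (applied with exponent~$1$, legitimate because $\sigma+d>0$) gives $\|\Psi_{k}^{(\sigma+d)}\|_{1}\leq c\,2^{-k(\sigma+d)}$, while Lemma~\ref{lem:representer.estimate} with $s^{\prime}=\sigma+d>d$ gives $\|\Psi_{k}^{(\sigma+d)}\|_{\infty}\leq c\,[\MESHRATIO(X_N)]^{d}N^{-1}2^{-k\sigma}$ for $k\geq m$. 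As $\Psi_{k}^{(\sigma+d)}$ is continuous, the log-convexity of the $\Lp{q}{d}$-norms applies, $\|\Psi_{k}^{(\sigma+d)}\|_{q}\leq\|\Psi_{k}^{(\sigma+d)}\|_{1}^{1/q}\|\Psi_{k}^{(\sigma+d)}\|_{\infty}^{1/p}$, and simplifying the exponents using $1/p+1/q=1$ yields $\|\Psi_{k}^{(\sigma+d)}\|_{q}\leq c\,[\MESHRATIO(X_N)]^{d/p}N^{-1/p}2^{-k(\sigma+d-d/p)}$. Multiplying by the $2^{kd}$ from the Bernstein step produces exactly the per-term estimate stated above, and the summation described in the first paragraph then gives \eqref{eq:fil approx phiN-1}.

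I expect the smoothness mismatch just described to be the main obstacle; the delicate point is the bookkeeping, namely verifying that the raise-and-descend returns precisely the exponent $2^{-k(\sigma-d/p)}$ (so that the spurious factor $N^{1/p}$ of the trivial bound is cancelled) and that the mesh-ratio power interpolates correctly to $d/p$. The endpoints come out automatically: for $p=\infty$ the mesh-ratio factor is trivial and only the $\Lp{1}{d}$ bound is used, and for $p=1$ only the $\Lp{\infty}{d}$ bound is used --- there $s-\tau>d$ is guaranteed by the hypothesis, so Lemma~\ref{lem:representer.estimate} could in fact be invoked directly, without the smoothness-raising device.
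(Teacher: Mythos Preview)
Your argument is correct. The bookkeeping you flag as the ``delicate point'' checks out: after the raise-and-descend one indeed gets
\[
\left\|\Psi_k^{(\sigma)}\right\|_q \leq c\,[\MESHRATIO(X_N)]^{d/p}\,N^{-1/p}\,2^{-k(\sigma-d/p)},
\]
and the geometric summation together with $2^{-m}\leq 2N^{-1/d}$ yields \eqref{eq:fil approx phiN-1}.

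The paper takes a somewhat different route. It establishes the two endpoint cases directly --- the $q=1$ bound from \eqref{eq:Bernstein.estimat.q.EQ.1} and the $q=\infty$ bound by applying Lemma~\ref{lem:representer.estimate} with $s'=s-\tau$ and summing --- and then asserts that the intermediate $q$ follow by Riesz--Thorin (i.e.\ log-convexity of $\|\cdot\|_q$ for the fixed function $\BeKeNumIntErr{N}{s-\tau}-\eta_m\ast\BeKeNumIntErr{N}{s-\tau}$). There is no smoothness-raising step; the paper simply interpolates the two endpoint estimates for the already-summed quantity.

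Your approach buys something the paper's write-up leaves implicit. The paper's $q=\infty$ derivation uses Lemma~\ref{lem:representer.estimate} at smoothness $s-\tau$ and requires the series $\sum_{k>m}2^{-k(s-\tau-d)}$ to converge, hence $s-\tau>d$; that is automatic when $q=\infty$ (since then $p=1$), but for intermediate $q$ the hypothesis only guarantees $s-\tau>d/p$, so the $L_\infty$ endpoint bound is not available in that form for the interpolation. Your smoothness-raising device applies Lemma~\ref{lem:representer.estimate} at $s'=\sigma+d>d$, where its stated hypothesis is always satisfied, and then recovers the correct exponent via the polynomial Bernstein inequality; this makes the argument go through uniformly in $q$ without appealing to any extension of Lemma~\ref{lem:representer.estimate} beyond its stated range. (The paper's argument can also be repaired, e.g.\ by interpolating term-by-term and noting that the proof of Lemma~\ref{lem:representer.estimate} actually yields the bound for every $s'>0$, but your route avoids the need for that observation.) The cost is one extra step; the benefit is a proof that is self-contained with respect to the lemmas exactly as stated.
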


\begin{proof}
The case $q = 1$ is given by \eqref{eq:Bernstein.estimat.q.EQ.1}.
It suffices to consider the case $q = \infty$, for then the case $1 < q < \infty$ follows from the Riesz-Thorin theorem. By Lemma~\ref{lem:representer.estimate},
\begin{equation*}
\left\| \filterpsi{k}{} \ast \BeKeNumIntErr{N}{s-\tau} \right\|_{\infty} = \sup_{\PT{y} \in \sph{d}} \left| \frac{1}{N} \sum_{j=1}^N \filterpsi{k}{(s-\tau)}( \PT{x}_j \cdot \PT{y} ) \right| \leq c \left[ \MESHRATIO( X_N ) \right]^d N^{-1} \, 2^{-k ( s - \tau - d )}, \qquad k \geq m,
\end{equation*}
and substitution into \eqref{eq:Bernstein.estimate.PRE} gives as before
\begin{align*}
\left\| \BeKeNumIntErr{N}{s} - \eta_m \ast \BeKeNumIntErr{N}{s} \right\|_{\sob{\infty}{\tau}{d}}
&\leq \sum_{k=m+1}^\infty \left\| \filterpsi{k}{} \ast \BeKeNumIntErr{N}{s-\tau} \right\|_{\infty} \leq c \left[ \MESHRATIO( X_N ) \right]^d N^{-1} \sum_{k=m+1}^\infty ( 2^k )^{d - (s - \tau)} \\
&\leq c^{iv} \left[ \MESHRATIO( X_N ) \right]^d  N^{-(s-\tau) / d}.
\end{align*}
This completes the proof.
\end{proof}

We are now ready to prove Lemma~\ref{lem:Bernstein.inequality}.

\begin{proof}[Proof of Lemma~\ref{lem:Bernstein.inequality}]
For $N \geq 1$, let $m = \lfloor \frac{1}{d} \log_2 N \rfloor$.
First, observe from \eqref{eq:auxiliary.estimates} that $\| \BeKeNumIntErr{N}{s} \|_q$ is positive.
Hence, by \eqref{eq:strategy.inequality} and \eqref{eq:strategy.inequality.polynomial.part},
\begin{equation*}
\left\| \BeKeNumIntErr{N}{s} \right\|_{\sob{q}{\tau}{d}} \leq \left( c_{q,\tau,d} \left\| \eta_m \right\|_{1,d} N^{\tau/d} + \frac{\left\| \BeKeNumIntErr{N}{s} - \eta_m \ast \BeKeNumIntErr{N}{s} \right\|_{\sob{q}{\tau}{d}}}{\left\| \BeKeNumIntErr{N}{s} \right\|_q} \right) \left\| \BeKeNumIntErr{N}{s} \right\|_q.
\end{equation*}
By Lemma~\ref{lem:filtered.convolution.approx.error} and \eqref{eq:auxiliary.estimates}, the ratio is upper bounded by
\begin{equation*}
\left\| \BeKeNumIntErr{N}{s} - \eta_m \ast \BeKeNumIntErr{N}{s} \right\|_{\sob{q}{\tau}{d}} \Big/ \left\| \BeKeNumIntErr{N}{s} \right\|_q \leq \frac{c \left[ \MESHRATIO( X_N ) \right]^{d/p} N^{(\tau-s)/d}}{c_{p,s,d}^\prime \, N^{-s/d}} = c^{\prime\prime} \left[ \MESHRATIO( X_N ) \right]^{d/p} N^{\tau/d}.
\end{equation*}
Therefore
\begin{equation*}
\left\| \BeKeNumIntErr{N}{s} \right\|_{\sob{q}{\tau}{d}} \leq \left( c_{q,s,d} \left\| \eta_m \right\|_{1,d} + c^{\prime\prime} \left[ \MESHRATIO( X_N ) \right]^{d/p} \right) N^{\tau/d} \left\| \BeKeNumIntErr{N}{s} \right\|_q.
\end{equation*}
The result follows by observing with the aid of Lemma~\ref{lem:q.norm.boundedness.Bessel.kernel} that $\left\| \eta_m \right\|_{1,d}$ is bounded uniformly in $m$.
\end{proof}

\section{Example: The unit circle}
\label{sec:unit.circle}

In order to gain more insight into the covering problem, we turn to the unit circle $\sph{}\DEF\sph{1}$ and exploit the fact that this one-dimensional manifold is more accessible than its higher-dimensional counterparts and appeal at the same time to the general principle that certain fundamental features are shared across changing dimensions.
Circular designs (i.e. equally spaced points) on $\sph{}$ are exact for all trigonometric polynomials with degree strictly less than the number of points. They form QMC-design sequences, that is, give rise to optimal order worst-case error for QMC methods that integrate functions from the Sobolev space $\sob{p}{s}{}$ for every $p \geq 1$ and every $s > 1/p$. One question then is: \emph{How much of the QMC-design property is destroyed when just one point is removed from each configuration?}

We interpret $\sob{p}{s}{}$ as Bessel potential space (see Section~\ref{sec:function.space.setting}).\footnote{Alternatively, for $p = 2$ one can use the approach in \cite{BrSaSlWo2014}.} The Bessel kernel for $\sph{}$ then reduces to the Fourier cosine series
\begin{equation} \label{eq:BeKe.unit.circle}
\BeKeMod{s}( \cos \phi ) = \BeKe{s}( \cos \phi ) - 1 = 2 \sum_{\ell=1}^\infty \frac{\cos( \ell \phi )}{( 1 + \ell^2 )^{s/2}}.
\end{equation}
The $\Lp{q}{}$-norm of $\BeKe{s}$ is bounded if $s > 1/p$ with $1/q + 1/p = 1$ (Lemma~\ref{lem:q.norm.boundedness.Bessel.kernel}).
The worst-case error of $\numint[X_N]$ of a node set $X_N \subset \sph{}$ can then be expressed in terms of an appropriate Bessel kernel (see Theorem~\ref{thm:wce.for.Wp} and the remark following this theorem).
For the asymptotic analysis of the worst-case error we express the Bessel kernel in terms of generalized Clausen functions; i.e., \footnote{We use the \emph{Pochhammer symbol} to denote rising factorials: $\Pochhsymb{a}{0} \DEF 1$, $\Pochhsymb{a}{n+1} \DEF (n + a) \Pochhsymb{a}{n}$.}
\begin{equation} \label{eq:BeKeMod.unit.circle.Clausen.fcn}
\BeKeMod{s}( \cos \phi ) = 2 \ClausenCi_{s}( \phi ) + 2 \sum_{m=1}^\infty (-1)^m \frac{\Pochhsymb{s/2}{m}}{m!} \ClausenCi_{s+2m}( \phi ),
\end{equation}
where for $\re z > 1$ the \emph{generalized Clausen cosine} and \emph{sine functions} are defined as
\begin{equation*}
\ClausenCi_z( \phi ) \DEF \sum_{\ell=1}^\infty \frac{\cos( \ell \, \phi )}{\ell^z}, \qquad \ClausenSi_z( \phi ) \DEF \sum_{\ell=1}^\infty \frac{\sin( \ell \, \phi )}{\ell^z}
\end{equation*}
which may be extended to the complex $z$-plane by analytic continuation.

\begin{remark*}
By mapping the unit circle to the interval $[0,1)$, the functions in $\sob{p}{s}{}$ become Fourier series
\begin{equation*}
f(x) = \sum_{k \in \mathbb{Z}} \widehat{f}(k) \, \mathrm{e}^{2\pi \mathrm{i} k x}.
\end{equation*}
In the Hilbert space setting ($p = 2$), a slight modification of the coefficients in \eqref{eq:BeKe.unit.circle} ($(1+\ell^2)^{-s/2}$ is changed to
$r_s( 0 ) \DEF 1$ and $r_s( \ell ) \DEF |\ell|^{-s}$ for $\ell \geq 1$),
gives the standard Korobov space \cite{SW01}, which is a reproducing kernel Hilbert space with reproducing kernel
\begin{equation*}
K_s(x,y) = \sum_{\ell \in \mathbb{Z}} r_s(\ell) \, \mathrm{e}^{2 \pi \mathrm{i} \ell (x-y)} = 1 + 2 \sum_{\ell = 1}^\infty \frac{\cos (2\pi \ell (x-y))}{\ell^s}.
\end{equation*}
Since $\ell^s \le (1+\ell^2)^{s/2} \le 2^{s/2} \ell^s$ for $\ell \ge 1$, we have that the change of coefficients yields a space with equivalent norm. Numerical integration in (tensor-product) Korobov spaces is discussed in many papers, see \cite[Section~5]{DKS13}.
\end{remark*}

It is natural to study the Hilbert space setting (when $p = 2$) and the general non-Hilbert space setting (when $p \geq 1$), separately.

\subsection{Hilbert space setting}

As described in \cite{BrSaSlWo2014}, the \emph{strength} (more precisely, the \emph{$2$-strength}) of a sequence $(X_N)$ of $N$-point sets on $\sph{}$ is the supremum of the indices $s \geq 1/2$ for which $(X_N)$ is a QMC-design sequence for $\sob{2}{s}{}$. In particular, the $2$-strength of a sequence of circular designs $X_N$ with $N$ equally spaced points as $N \to \infty$ is infinite.

\begin{theorem} \label{thm:one.point.removed}
Let $s > 1/2$. A sequence of configurations of $N$ equally spaced points after one point is removed (or a uniformly bounded number of points are removed) from each configuration is a QMC-design sequence for $\sob{2}{s}{}$ for every $1/2 < s \leq 1$ but not for $s > 1$; i.e., such a sequence has $2$-strength $1$.
\end{theorem}

\begin{proof}
Let $s > 1/2$. Then the Bessel kernel $\BeKe{2s}$ is a reproducing kernel for the Bessel potential space $\sob{2}{s}{}$ and by the reproducing kernel Hilbert space approach the squared worst-case error of $\numint[X_N]$ of a node set $X_N = \{ ( \cos \phi_j, \sin \phi_j ) \}_{j=0}^{N-1} \subset \sph{}$ has the form
\begin{equation} \label{eq:general.wce.formula.unit.cirlce.p.EQ.2}
\left[ \WCE( \numint[X_N]; \sob{2}{s}{} ) \right]^2 = \frac{1}{N^2} \sum_{j=0}^{N-1} \sum_{k=0}^{N-1} \BeKeMod{2s}( \cos( \phi_j - \phi_k ) ).
\end{equation}

Let the points in $X_N$ be the equally spaced $N$th roots of unity so that $\phi_j = 2 \pi j / N$, $j = 0, \dots, N-1$. Such points are circular $(N-1)$-designs and satisfy the following identities: let $\ell = 0, 1, 2, \ldots$, then
\begin{equation} \label{eq:trig.identities}
\sum_{k=0}^{N-1} \sin \frac{2 \pi \ell k}{N} = 0, \qquad \sum_{k=0}^{N-1} \cos \frac{2 \pi \ell k}{N} = \begin{cases} N & \text{if $N \mid \ell$,} \\ 0 & \text{if $N \nmid \ell$,} \end{cases}
\end{equation}
where $N \mid \ell$ means that $\ell$ is an integer multiple of $N$ (``$N$ divides $\ell$'') and $N \nmid \ell$ means that $\ell$ is not divisible by $N$.
Substituting \eqref{eq:BeKeMod.unit.circle.Clausen.fcn} into the worst-case error formula \eqref{eq:general.wce.formula.unit.cirlce.p.EQ.2} and using \eqref{eq:trig.identities}, straightforward computation gives
\begin{equation}
\WCE( \numint[X_N]; \sob{2}{s}{} ) = \frac{1}{N^s} \left( 2 \zetafcn( 2s ) + \sum_{m=1}^\infty (-1)^m \frac{\Pochhsymb{s}{m}}{m!} \frac{2 \zetafcn( 2s + 2m )}{N^{2m}} \right)^{1/2};
\end{equation}
i.e., we recover the fact that equally spaced points, indeed, form QMC-design sequences for~$\sob{2}{s}{}$ for each $s > 1/2$.

Now, let $Z_{N-M}$ denote the collection of $N$th roots of unity with the first $M$ points omitted.
Using \eqref{eq:trig.identities}, it is readily verified that
\begin{equation} \label{eq:inner.sum}
\sum_{k = M}^{N-1} \BeKeMod{2s}\big( \cos\big( \frac{2\pi k}{N} - \phi \big) \big) = 2 N \sum_{\nu=1}^\infty \frac{\cos( N \phi )}{( 1 + \nu^2 \, N^2 )^s} - \sum_{k=0}^{M-1} \BeKeMod{2s}\big( \cos\big( \frac{2\pi k}{N} - \phi \big) \big).
\end{equation}
Substituting into the worst-case error formula \eqref{eq:general.wce.formula.unit.cirlce.p.EQ.2}, we get
\begin{align*}
&\left[ \WCE( \numint[Z_{N-M}]; \sob{2}{s}{} ) \right]^2
= \frac{1}{( N - M )^2} \sum_{j=M}^{N-1} \sum_{k=M}^{N-1} \BeKeMod{2s}\big( \cos\big( \frac{2\pi k}{N} - \frac{2\pi j}{N} \big) \big) \\
&\phantom{equals}= \frac{2 N \left( N - M \right)}{( N - M )^2} \sum_{\nu=1}^\infty \frac{1}{( 1 + \nu^2 \, N^2 )^s} - \frac{1}{(N - M)^2} \sum_{k=0}^{M-1} \sum_{j=M}^{N-1} \BeKeMod{2s}\big( \cos\big( \frac{2\pi j}{N} - \frac{2\pi k}{N} \big) \big).
\end{align*}
A second application of \eqref{eq:inner.sum} gives
\begin{equation*}
\left[ \WCE( \numint[Z_{N-M}]; \sob{2}{s}{} ) \right]^2 = \frac{2 N \left( N - 2 M \right)}{( N - M )^2} \sum_{\nu=1}^\infty \frac{1}{( 1 + \nu^2 \, N^2 )^s} + \frac{M^2}{(N - M)^2} \, \BeKeNumIntErr{N,M}{2s},
\end{equation*}
where
\begin{align*}
\BeKeNumIntErr{N,M}{2s}
&\DEF \frac{1}{M^2} \sum_{j=0}^{M-1} \sum_{k=0}^{M-1} \BeKeMod{2s}\big( \cos \frac{2\pi ( j - k )}{N} \big) \\
&= \BeKeMod{2s}( 1 ) - \frac{2}{M^2} \, \sum_{\nu=1}^{M-1} ( M - \nu ) \left[ \BeKeMod{2s}( 1 ) - \BeKeMod{2s}\big( \cos \frac{2\pi \nu}{N} \big) \right]
\end{align*}
and
\begin{equation} \label{eq:BeKeMod.at.1}
\BeKeMod{2s}( 1 ) = 2 \sum_{m=0}^\infty (-1)^m \frac{\Pochhsymb{s}{m}}{m!} \zetafcn( 2s + 2m ) = 2 \sum_{\ell=1}^\infty \frac{1}{( 1 + \ell^2 )^{s/2}}.
\end{equation}
Observe from \eqref{eq:BeKe.unit.circle} that the square-bracketed expression above is non-negative. Furthermore, one has
\begin{equation*}
0 \leq \BeKeMod{2s}( 1 ) - \BeKeNumIntErr{N,M}{2s}  \leq \left( \frac{2}{M^2} \, \sum_{\nu=1}^{M-1} ( M - \nu ) \right) \max_{0 \leq x \leq 2\pi M / N} \left[ \BeKeMod{2s}( 1 ) - \BeKeMod{2s}\big( \cos \frac{2\pi \nu}{N} \big) \right].
\end{equation*}
Since the parenthetical expression is bounded by $1$ and the maximum tends to zero when $M / N \to 0$, we arrive at
\begin{equation*}
\left[ \WCE( \numint[Z_{N-M}]; \sob{2}{s}{} ) \right]^2 = \frac{2 N \left( N - 2 M \right)}{( N - M )^2} \sum_{\nu=1}^\infty \frac{1}{( 1 + \nu^2 \, N^2 )^s} + \frac{M^2}{(N - M)^2} \left\{ \BeKeMod{2s}( 1 ) + o( 1 ) \right\}.
\end{equation*}
Rewriting the infinite series, we finally arrive at
\begin{equation*}
\left[ \WCE( \numint[Z_{N-M}]; \sob{2}{s}{} ) \right]^2 = \frac{M^2}{( N - M )^2} \left\{ \BeKeMod{2s}(1) + \cdots \right\} + \frac{ N ( N - 2 M )}{( N - M )^2} \left\{ \frac{2 \zetafcn( 2s )}{N^{2s}} + \cdots \right\}
\end{equation*}
Let $M = 1$. Then for $1/2 < s < 1$ we obtain the asymptotics
\begin{equation*}
\WCE( \numint[Z_{N-1}]; \sob{2}{s}{} ) = \frac{\sqrt{2 \zetafcn( 2s )}}{( N - 1 )^s} \left\{ 1 + \frac{\BeKeMod{2s}( 1 )}{4 \zetafcn( 2s )} \left( N - 1 \right)^{2s-2} + \cdots \right\},
\end{equation*}
whereas for $s > 1$ we have that
\begin{equation*}
\WCE( \numint[Z_{N-1}]; \sob{2}{s}{} ) = \frac{\sqrt{\BeKeMod{2s}( 1 )}}{N - 1} \left\{ 1 + \frac{\zetafcn( 2s )}{\BeKeMod{2s}( 1 )} \left( N - 1 \right)^{2-2s} + \cdots \right\}.
\end{equation*}
We conclude that $( Z_{N-1} )$ is a QMC-design sequence for $\sob{2}{s}{}$ if and only if $1/2 < s \leq 1$; i.e., the $2$-strength of $( Z_{N-1} )$ is $1$.
This completes the proof when one point is omitted.
\end{proof}
A similar but more tedious argument provides the leading term behavior of the asymptotics of the worst-case error when a finite number (uniformly upper bounded) of points are removed from a circular design.

Let $Z_{N-M}$ denote a configuration of $N$ equally spaced points on $\sph{}$ with $M$ consecutive points removed. The hole thus generated in $Z_{N-M}$ has covering radius
\begin{equation} \label{eq:rho.Z.N.minus.M}
\COV( Z_{N-M} ) = \frac{\pi ( M + 1 )}{N}.
\end{equation}
(Note that $\pi / N$ is the packing radius of the $N$ equally sized circular arcs making up $\sph{}$ which is half of the minimal geodesic separation distance of points in $Z_{N-M}$.) For our discussion we want to assume that the hole size shrinks as $N$ grows, so $M / N \to 0$ as $N \to \infty$.
Theorem~\ref{thm:one.point.removed} covers the case when $M$ is uniformly bounded. We now consider the case $M \to \infty$ which is equivalent to $N \COV( Z_{N - M} ) \to \infty$ as $N \to \infty$. Then the sequence $( Z_{N-M} )$ does not have the optimal covering property. The next theorem shows that, despite bad covering, $(Z_{N-M})$ has $2$-strength $1$ if the artificially generated holes shrink rapidly enough.
Interestingly, the sequence $(Z_{N-M})$ is {\bf not} a QMC-design sequence for $\sob{2}{s}{}$ for~$s = 1$ .
Moreover, if the hole size shrinks too slowly, then $( Z_{N - M} )$ is not a QMC-design sequence for any $s > 1/2$.
However, one can choose the asymptotic behavior of the covering radius to get as close as one likes to a QMC-design sequence for $\sob{2}{s}{}$ for $1/2 < s < 1$ (e.g., when the covering radius behaves like $(\log \circ \cdots \circ \log N) / N^s$).

\begin{theorem} \label{thm:M.points.removed}
Let $s > 1/2$ and $( Z_{N-M} )$ be as above with $M \to \infty$ and $M / N \to 0$. 
\begin{enumerate}[(a)]
\item If $N^{s} \rho( Z_{N - M} ) \to c$ for some real $c \geq 0$, then $( Z_{N-M} )$ has $2$-strength~$1$ but is not a QMC-design sequence for $s = 1$.
\item If $N^{s} \rho( Z_{N - M} ) \to \infty$, then $( Z_{N-M} )$ is not a QMC-design sequence for $\sob{2}{s}{}$ with $s > 1/2$. In particular, when $1/2 < s < 3/2$,
\begin{equation*}
\WCE( \numint[Z_{N-M}]; \sob{2}{s}{} ) = \sqrt{\BeKeMod{2s}(1)} \, \frac{\rho( Z_{N - M} )}{\pi} \left\{ 1 + o( 1 ) \right\} \qquad \text{as $N \to \infty$,}
\end{equation*}
where $\BeKeMod{2s}(1)$ is the constant in \eqref{eq:BeKeMod.at.1}.
\end{enumerate}
\end{theorem}

\begin{proof}
We proceed along the same lines as the proof of Theorem~\ref{thm:one.point.removed} and determine the asymptotic (large $N$) behavior of the worst-case error for QMC methods based on node sets $Z_{N-M}$ for functions in $\sob{2}{s}{}$ with $s > 1/2$.
Let $Z_{N-M}$ denote the collection of $N$th roots of unity with the first $M$ points omitted.
For $M / N \to 0$ as $N \to \infty$, we obtained the following asymptotics in the proof of Theorem~\ref{thm:one.point.removed}:
\begin{equation*}
\left[ \WCE( \numint[Z_{N-M}]; \sob{2}{s}{} ) \right]^2 = \frac{M^2}{( N - M )^2} \left\{ \BeKeMod{2s}(1) + \cdots \right\} + \frac{ N ( N - 2 M )}{( N - M )^2} \left\{ \frac{2 \zetafcn( 2s )}{N^{2s}} + \cdots \right\}.
\end{equation*}
Now, let $M$ grow with $N$ such that $M / N \to 0$ and $M \to \infty$ as $N \to \infty$. The unboundedness of $M$ implies that $N \COV( Z_{N-M} ) = \pi ( M + 1 ) \to \infty$ as $N \to \infty$ and thus $( Z_{N - M} )$ does not have the optimal covering property. However, for $N^s \, M / ( N - M ) \to c$ as $N \to \infty$ for some real $c \geq 0$, we still have that
\begin{equation*}
\WCE( \numint[Z_{N-M}]; \sob{2}{s}{} ) = \frac{\left( 2 \zetafcn( 2s ) + c^2 \BeKeMod{2s}(1) \right)^{1/2}}{N^{s}} \left\{ 1 + o( 1 ) \right\} \qquad \text{as $N \to \infty$,}
\end{equation*}
where $\BeKeMod{2s}(1)$ is given in \eqref{eq:BeKeMod.at.1}. Thus $( Z_{N - M} )$ is a QMC-design sequence for $\sob{2}{s}{}$ for ${1/2 < s < 1}$. (The upper bound on $s$ is imposed by the unboundedness of $M$.) On the other hand, when $N^s \, M / ( N - M ) \to \infty$, then
\begin{equation*}
\WCE( \numint[Z_{N-M}]; \sob{2}{s}{} ) = \sqrt{\BeKeMod{2s}(1)} \, \frac{M}{N - M} \left\{ 1 + o( 1 ) \right\} \qquad \text{as $N \to \infty$.}
\end{equation*}
The last convergence relation for $M$ is automatically satisfied for $s \geq 1$ and gives suboptimal convergence rate for the worst-case error when $1/2 < s < 3/2$. The result follows by using the covering radius instead of $M$ (see~\eqref{eq:rho.Z.N.minus.M}).
\end{proof}

\subsection{The general case $p \geq 1$}
We now leave the Hilbert space setting and consider $\sob{p}{s}{}$ for $p \geq 1$.
Let $s > 1/p$. By Theorem~\ref{thm:wce.for.Wp} the worst-case error for $\numint[Z_N]$ for a circular design consisting of the $N$th roots of unity for $\sob{p}{s}{}$ is given by the $\Lp{q}{}$-norm of the function $\BeKeNumIntErr{N}{s}$ of \eqref{eq:SBF-Bessel}. On the unit circle one can write with the help of \eqref{eq:inner.sum},
\begin{equation*}
\BeKeNumIntErr{N}{s}( \cos \phi ) = \frac{1}{N} \sum_{k = 0}^{N-1} \BeKeMod{s}\big( \cos\big( \frac{2\pi k}{N} - \phi \big) \big) = \frac{2}{N^s} \sum_{\nu=1}^\infty \frac{\cos( N \phi )}{( \nu^2 + 1/N^2 )^{s/2}}.
\end{equation*}
Hence
\begin{equation*}
\WCE( \numint[Z_{N}]; \sob{p}{s}{} ) = \left\| \BeKeNumIntErr{N}{s} \right\|_q = \frac{2}{N^{s}} \left( \frac{1}{2\pi} \int_0^{2\pi} \left| \sum_{\nu=1}^\infty \frac{\cos( N \phi )}{( \nu^2 + 1/N^2 )^{s/2}} \right|^q \dd \phi \right)^{1/q}.
\end{equation*}
Dividing the integration domain into $N$ parts and using the $2\pi$-periodicity of the integrand, it follows that
\begin{equation*}
\WCE( \numint[Z_{N}]; \sob{p}{s}{} ) = \frac{2}{N^{s}} \left( \frac{1}{2\pi} \int_0^{2\pi} \left| \sum_{\nu=1}^\infty \frac{\cos \phi }{( \nu^2 + 1/N^2 )^{s/2}} \right|^q \dd \phi \right)^{1/q}.
\end{equation*}
For large $N$, the series can be approximated by the generalized Clausen cosine function. A mean value argument ($( x^2 + \varepsilon )^{q/2} = | x |^q + \frac{1}{2} q \varepsilon ( x^2 + \varepsilon^\prime )^{q/2-1}$ for $0 < \varepsilon^\prime < \varepsilon$) gives that
\begin{equation} \label{eq:wce.asymptotics.p.GEQ.1}
\WCE( \numint[Z_{N}]; \sob{p}{s}{} ) = \frac{2}{N^{s}} \left( \frac{1}{2\pi} \int_0^{2\pi} \left| \ClausenCi_s( \phi ) \right|^q \dd \phi + \mathcal{O}(N^{-2} ) \right)^{1/q} \qquad \text{as $N \to \infty$.}
\end{equation}

Let $Z_{N-M}$ be the set of $N$th roots of unity with $M$ consecutive points omitted. Then \eqref{eq:inner.sum} gives  that
\begin{align*}
\BeKeNumIntErr{N-M}{s}( \cos \phi )
&= \frac{1}{N-M} \sum_{k = 1}^{N-M} \BeKeMod{s}\big( \cos\big( \frac{2\pi k}{N} - \phi \big) \big) \\
&= \frac{N}{N-M} \, \frac{2}{N^s} \sum_{\nu=1}^\infty \frac{\cos( N \phi )}{( \nu^2 + 1/N^2 )^{s/2}} - \frac{1}{N-M} \sum_{k=0}^{M-1} \BeKeMod{s}\big( \cos\big( \frac{2\pi k}{N} - \phi \big) \big). 
\end{align*}
Similarly as before, we get
\begin{equation*}
\begin{split}
\WCE( \numint[Z_{N-M}]; \sob{p}{s}{} )
&= \Bigg( \frac{1}{N} \sum_{k=0}^{N-1} \int_0^1 \Bigg| \frac{N}{N-M} \, \frac{2}{N^s} \sum_{\nu=1}^\infty \frac{\cos( 2 \pi x )}{( \nu^2 + 1/N^2 )^{s/2}} \\
&\phantom{equals}- \frac{1}{N-M} \sum_{j=0}^{M-1} \BeKeMod{s}\big( \cos \frac{2\pi( x + k - j )}{N} \big) \Bigg|^q \dd x \Bigg)^{1/q}.
\end{split}
\end{equation*}
Approximation with a generalized Clausen cosine function gives
\begin{equation*}
\begin{split}
\WCE( \numint[Z_{N-M}]; \sob{p}{s}{} )
&= \Bigg( \frac{1}{N} \sum_{k=0}^{N-1} \int_0^1 \Bigg| \frac{N}{N-M} \, \frac{2 \ClausenCi_s( 2\pi x ) + \mathcal{O}(N^{-2})}{N^s} \\
&\phantom{equals}- \frac{M}{N-M} \, \frac{1}{M} \sum_{j=0}^{M-1} \BeKeMod{s}\big( \cos \frac{2\pi( x + k - j )}{N} \big) \Bigg|^q \dd x \Bigg)^{1/q}.
\end{split}
\end{equation*}
The asymptotic behavior of the worst-case error is determined by the limiting behavior of $N^s \COV( Z_{N-M} )$ as $N \to \infty$. Similar results to the Hilbert space setting can be derived. We leave this to the reader. (Particular care is needed when both contributions between the absolute value signs are in ``balance'' for large $N$; e.g., when $M = 1$ and $s = 1$.)

\vspace{1em}

{\bf Acknowledgements:} The authors are grateful to an anonymous referee for valuable comments that improved the paper.

\bibliographystyle{abbrv}
\bibliography{bibliography}

\begin{thebibliography}{10}

\bibitem{Au:1998}
T.~Aubin.
\newblock {\em Some nonlinear problems in {R}iemannian geometry}.
\newblock Springer Monographs in Mathematics. Springer-Verlag, Berlin, 1998.

\bibitem{BeBuPa1968}
H.~Berens, P.~L. Butzer, and S.~Pawelke.
\newblock Limitierungsverfahren von {R}eihen mehrdimensionaler
  {K}ugelfunktionen und deren {S}aturationsverhalten.
\newblock {\em Publ. Res. Inst. Math. Sci. Ser. A}, 4:201--268, 1968/1969.

\bibitem{BeLo1976}
J.~Bergh and J.~L{\"o}fstr{\"o}m.
\newblock {\em Interpolation spaces. {A}n introduction}.
\newblock Springer-Verlag, Berlin, 1976.
\newblock Grundlehren der Mathematischen Wissenschaften, No. 223.

\bibitem{BoRaVi2013}
A.~Bondarenko, D.~Radchenko, and M.~Viazovska.
\newblock Optimal asymptotic bounds for spherical designs.
\newblock {\em Ann. of Math. (2)}, 178(2):443--452, 2013.

\bibitem{BoRaVi2014}
A.~Bondarenko, D.~Radchenko, and M.~Viazovska.
\newblock Well-separated spherical designs.
\newblock {\em Constr. Approx.}, pages 1--20 (published online), 2014.

\bibitem{BCCGST2013}
L.~Brandolini, C.~Choirat, L.~Colzani, G.~Gigante, R.~Seri, and G.~Travaglini.
\newblock Quadrature rules and distribution of points on manifolds.
\newblock {\em Ann. Sc. Norm. Super. Pisa Cl. Sci.}, page~35, 2013 (in press).

\bibitem{Brauchart2008:optimal_logarithmic}
J.~S. Brauchart.
\newblock Optimal logarithmic energy points on the unit sphere.
\newblock {\em Math. Comp.}, 77(263):1599--1613, 2008.

\bibitem{BrHe2007}
J.~S. Brauchart and K.~Hesse.
\newblock Numerical integration over spheres of arbitrary dimension.
\newblock {\em Constr. Approx.}, 25(1):41--71, 2007.

\bibitem{BrSaSlWo2014}
J.~S. Brauchart, E.~B. Saff, I.~H. Sloan, and R.~S. Womersley.
\newblock {QMC designs: Optimal order Quasi Monte Carlo integration schemes on
  the sphere}.
\newblock {\em Math. Comp.}, 83(290):2821--2851, 2014.

\bibitem{DeGoSei1977}
P.~Delsarte, J.~M. Goethals, and J.~J. Seidel.
\newblock Spherical codes and designs.
\newblock {\em Geometriae Dedicata}, 6(3):363--388, 1977.

\bibitem{DKS13}
J.~Dick, F.~Y. Kuo, and I.~H. Sloan.
\newblock High-dimensional integration: the quasi-{M}onte {C}arlo way.
\newblock {\em Acta Numer.}, 22:133--288, 2013.

\bibitem{NIST:DLMF}
{NIST Digital Library of Mathematical Functions}.
\newblock http://dlmf.nist.gov/, Release 1.0.6 of 2013-05-06.
\newblock Online companion to \cite{Olver:2010:NHMF}.

\bibitem{He2006}
K.~Hesse.
\newblock A lower bound for the worst-case cubature error on spheres of
  arbitrary dimension.
\newblock {\em Numer. Math.}, 103(3):413--433, 2006.

\bibitem{HeSl2005b}
K.~Hesse and I.~H. Sloan.
\newblock Optimal lower bounds for cubature error on the sphere $\mathbb{S}^2$.
\newblock {\em J. Complexity}, 21(6):790--803, 2005.

\bibitem{HeSl2005a}
K.~Hesse and I.~H. Sloan.
\newblock Worst-case errors in a {S}obolev space setting for cubature over the
  sphere $\mathbb{S}^2$.
\newblock {\em Bull. Austral. Math. Soc.}, 71(1):81--105, 2005.

\bibitem{HuZae2009}
J.~Hu and M.~Z{\"a}hle.
\newblock Generalized {B}essel and {R}iesz potentials on metric measure spaces.
\newblock {\em Potential Anal.}, 30(4):315--340, 2009.

\bibitem{LeMh2006}
Q.~T. Le~Gia and H.~N. Mhaskar.
\newblock Polynomial operators and local approximation of solutions of
  pseudo-differential equations on the sphere.
\newblock {\em Numer. Math.}, 103(2):299--322, 2006.

\bibitem{MhNaPrWa2010}
H.~N. Mhaskar, F.~J. Narcowich, J.~Prestin, and J.~D. Ward.
\newblock {$L^p$} {B}ernstein estimates and approximation by spherical basis
  functions.
\newblock {\em Math. Comp.}, 79(271):1647--1679, 2010.

\bibitem{Mu1966}
C.~M{\"u}ller.
\newblock {\em Spherical harmonics}, volume~17 of {\em Lecture Notes in
  Mathematics}.
\newblock Springer-Verlag, Berlin, 1966.

\bibitem{NaPeWa2006}
F.~Narcowich, P.~Petrushev, and J.~Ward.
\newblock Decomposition of {B}esov and {T}riebel-{L}izorkin spaces on the
  sphere.
\newblock {\em J. Funct. Anal.}, 238(2):530--564, 2006.

\bibitem{NaPeWa2006-2}
F.~J. Narcowich, P.~Petrushev, and J.~D. Ward.
\newblock Localized tight frames on spheres.
\newblock {\em SIAM J. Math. Anal.}, 38(2):574--594 (electronic), 2006.

\bibitem{NaSuWa2010}
F.~J. Narcowich, X.~Sun, J.~D. Ward, and Z.~Wu.
\newblock Le{V}eque type inequalities and discrepancy estimates for minimal
  energy configurations on spheres.
\newblock {\em J. Approx. Theory}, 162(6):1256--1278, 2010.

\bibitem{Nie92}
H.~Niederreiter.
\newblock {\em {Random Number Generation and quasi-Monte Carlo Methods}}.
\newblock CBMS-NSF Regional Conference Series in Applied Mathematics. Society
  for Industrial and Applied Mathematics, Philadelphia, PA, USA, 1992.

\bibitem{Olver:2010:NHMF}
F.~W.~J. Olver, D.~W. Lozier, R.~F. Boisvert, and C.~W. Clark, editors.
\newblock {\em {NIST Handbook of Mathematical Functions}}.
\newblock Cambridge University Press, New York, NY, 2010.
\newblock Print companion to \cite{NIST:DLMF}.

\bibitem{Rei1999}
M.~Reimer.
\newblock Spherical polynomial approximations: a survey.
\newblock In {\em Advances in multivariate approximation
  ({W}itten-{B}ommerholz, 1998)}, volume 107 of {\em Math. Res.}, pages
  231--252. Wiley-VCH, Berlin, 1999.

\bibitem{Rei2000}
M.~Reimer.
\newblock Hyperinterpolation on the sphere at the minimal projection order.
\newblock {\em J. Approx. Theory}, 104(2):272--286, 2000.

\bibitem{Ru1992}
K.~P. Rustamov.
\newblock On the equivalence of the {$K$}-functional and the modulus of
  smoothness of functions on a sphere.
\newblock {\em Mat. Zametki}, 52(3):123--129, 160, 1992.

\bibitem{SW01}
I.~H. Sloan and H.~Wo{\'z}niakowski.
\newblock Tractability of multivariate integration for weighted {K}orobov
  classes.
\newblock {\em J. Complexity}, 17(4):697--721, 2001.
\newblock Complexity of multivariate problems (Kowloon, 1999).

\bibitem{Str1983}
R.~S. Strichartz.
\newblock Analysis of the {L}aplacian on the complete {R}iemannian manifold.
\newblock {\em J. Funct. Anal.}, 52(1):48--79, 1983.

\bibitem{Yud1995}
V.~A. Yudin.
\newblock Coverings of a sphere, and extremal properties of orthogonal
  polynomials.
\newblock {\em Diskret. Mat.}, 7(3):81--88, 1995.

\end{thebibliography}

\end{document}